\documentclass[a4paper]{amsart}
\oddsidemargin 0mm
\evensidemargin 0mm
\topmargin 10mm
\textwidth 160mm
\textheight 230mm
\tolerance=9999
\usepackage[latin1]{inputenc}
\usepackage{amssymb}
\usepackage{amsmath}
\usepackage{mathrsfs}
\usepackage{amsthm}
\usepackage{amsfonts}
\usepackage{cancel}
\usepackage{textcomp}
\usepackage{graphicx}
\usepackage[pdftex]{color}
\usepackage{paralist}
\usepackage[shortlabels]{enumitem}
\usepackage{hyperref}
\usepackage{comment}
\usepackage{mathtools}
\usepackage[arrow, matrix, curve]{xy}

\newtheorem*{corollary*}{Corollary}
\newtheorem*{theorem*}{Theorem}
\newtheorem{theorem}{Theorem}[section]

\newtheorem{corollary}[theorem]{Corollary}
\newtheorem{lemma}[theorem]{Lemma}

\newtheorem{proposition}[theorem]{Proposition}
\newtheorem{question}[theorem]{Question}

\newtheorem*{claim*}{Claim}
\newtheorem*{conjecture}{Conjecture}

\theoremstyle{definition}

\newtheorem{remark}[theorem]{Remark}
\newtheorem{example}[theorem]{Example}

\theoremstyle{remark}

\numberwithin{equation}{theorem}

\makeatletter
\renewcommand*\env@matrix[1][\
arraystretch]{%
  \edef\arraystretch{#1}%
  \hskip -\arraycolsep
  \let\@ifnextchar\new@ifnextchar
  \array{*\c@MaxMatrixCols c}}
\makeatother

\renewcommand{\mod}{\operatorname{mod-} \!}

\newcommand{\domdim}{\operatorname{domdim}}
\newcommand{\sdomdim}{\operatorname{sdomdim}}
\newcommand{\Gdim}{\operatorname{Gdim}}
\newcommand{\Def}{\operatorname{Def}}
\newcommand{\id}{\operatorname{id}}
\newcommand{\pd}{\operatorname{pd}}
\newcommand{\findim}{\operatorname{findim}}
\newcommand{\scodomdim}{\operatorname{scodomdim}}

\newcommand{\gldim}{\operatorname{gldim}}
\newcommand{\codomdim}{\operatorname{codomdim}}
\newcommand{\Int}{\operatorname{Int}}
\newcommand{\End}{\operatorname{End}}
\newcommand{\Hom}{\operatorname{Hom}}

\renewcommand{\top}{\operatorname{\mathrm{top}}}

\newcommand{\soc}{\operatorname{\mathrm{soc}}}

\newcommand{\op}{\operatorname{op}}

\begin{document}

\title{On the classification of higher Auslander algebras for Nakayama algebras}
\date{\today}

\subjclass[2010]{Primary 16G10, 16E10}

\keywords{dominant dimension, Nakayama algebras, higher Auslander algebras, global dimension}
\author{Dag Oskar Madsen}
\address{Faculty of Education and Arts, Nord University, Postboks 1490, NO-8049 Bod{\o}, Norway}
\email{dag.o.madsen@nord.no}
\author{Ren\'{e} Marczinzik$^*$}
\address{Institute of algebra and number theory, University of Stuttgart, Pfaffenwaldring 57, 70569 Stuttgart, Germany}
\thanks{\newline\indent$^*$ corresponding author}
\email{marczire@mathematik.uni-stuttgart.de}
\author{Gjergji Zaimi}
\email{gjergjiz@gmail.com}
\begin{abstract}
We give new improved bounds for the dominant dimension of Nakayama algebras and use those bounds to give a classification of Nakayama algebras with $n$ simple modules that are higher Auslander algebras with global dimension at least $n$.
The classification is then used to extend the results on the inequality for the global dimension of Nakayama algebras obtained in \cite{MM}.
\end{abstract}

\maketitle
\section*{Introduction}

We assume that all algebras are finite dimensional (bound) quiver algebras over a field $K$ and furthermore that they are connected and non-semisimple if nothing is stated otherwise.
Higher Auslander algebras were introduced by Iyama in \cite{Iya} and \cite{Iya2} as a generalisation of the classical Auslander algebras.
Higher Auslander algebras are defined as algebras with finite global dimension at least two such that the global dimension coincides with the dominant dimension, and in \cite{Iya2} it was shown that they correspond bijectively to cluster tilting objects in module categories.
It is a highly complicated problem to give a classification of higher Auslander algebras even for special classes of non-trivial algebras.
One of the most basic classes of algebras in the representation theory of finite dimensional algebras are the Nakayama algebras, which are defined by the condition that any indecomposable left or right projective module is uniserial.
Nakayama algebras appear in various places, we mention the classification of Brauer tree algebras up to stable equivalence see for example chapter X.3 of \cite{ARS}, the appearance of the derived categories of Nakayama algebras in algebraic geometry in \cite{KLM}, and a recent connection between Nakayama algebras and the combinatorics of Dyck paths in \cite{MRS}.
In the recent article \cite{JK}, higher Nakayama algebras were introduced as a generalisation of Nakayama algebras and used to give large new classes of examples for higher Auslander-Reiten theory.

Nakayama algebras can be grouped into two disjoint classes, namely Nakayama algebras having a directed line as a quiver and Nakayama algebras having an oriented cycle as a quiver.
We call Nakayama algebras with a line as a quiver LNakayama algebras and those with a cycle as a quiver we call CNakayama algebras.

For Nakayama algebras the problem to classify the higher Auslander algebras among them is open, and in general there are many Nakayama algebras with $n$ simple modules that are higher Auslander algebras of fixed global dimension $r$ for some $r \geq 2$. A classification of higher Auslander algebras is known for truncated Nakayama algebras, see \cite{CIM} and \cite{V}, but besides those results it seems not much is known.

In this article we give a classification of Nakayama algebras with $n$ simple modules that are higher Auslander algebras of global dimension at least $n$. Surprisingly, there is exactly one such algebra with $n$ simple modules and global dimension $r$ for $n \leq r \leq 2n-2$, and the class of those algebras coincides with other naturally appearing classes of Nakayama algebras. We recall that the global dimension of a Nakayama algebra with finite global dimension is bounded above by $2n-2$ by \cite{Gus}.

We define the \emph{defect} Def($A$) of a finite dimensional algebra $A$ as the number of indecomposable injective non-projective $A$-modules. Note that the defect measures how far away an algebra is from being selfinjective, since the selfinjective algebras are exactly those with defect $0$. We define the class of \emph{defect 1 Nakayama algebras}, short D1-Nakayama algebras, as the Nakayama algebras having defect $1$.
We define the \emph{super dominant dimension} $\sdomdim(A)$ of an algebra $A$ as the sum of all dominant dimensions of the indecomposable projective non-injective $A$-modules.
Our main tool for the classification of Nakayama algebras of large global dimension that are higher Auslander algebras is the following inequality, that is our first main theorem and improves inequalities obtained in \cite{Mar} for Nakayama algebras.

\begin{theorem*}
Let $A$ be a finite dimensional non-selfinjective Nakayama algebra with $n$ simple modules.
Then
$$\sdomdim(A) \leq 2n-\Def(A).$$
\end{theorem*}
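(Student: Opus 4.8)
The plan is to reduce the inequality to a purely combinatorial count on minimal injective coresolutions and then bound that count. First I would record the elementary bookkeeping: among the $n$ indecomposable projectives exactly $p$ are injective, and among the $n$ indecomposable injectives exactly the same $p$ are projective, so the number of indecomposable projective non-injective modules equals the number of indecomposable injective non-projective modules, namely $\Def(A)=n-p$. Since $\sdomdim(A)$ is a sum of $\Def(A)$ terms, the asserted bound is equivalent to
\[
\sum_{P}\bigl(\domdim(P)+1\bigr)\le 2n,
\]
the sum running over the indecomposable projective non-injective modules $P$. This is the form I would actually prove.

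Next I would exploit that $A$ is Nakayama, so every indecomposable is uniserial and minimal injective coresolutions are completely explicit. For an indecomposable non-injective $P$ the coresolution $0\to P\to I^{0}\to I^{1}\to\cdots$ has $I^{k}$ equal to the injective envelope of the $k$-th cosyzygy $\Omega^{-k}(P)$, which is again uniserial and hence determined by its socle; thus $I^{k}$ is the indecomposable injective with that socle, and $\domdim(P)$ is precisely the length of the initial run of indices $k$ for which $I^{k}$ is projective-injective. Using uniseriality one checks that the socle $S_{s_{k+1}}$ of $\Omega^{-(k+1)}(P)$ is read off from the top $S_{u_{k}}$ of $\Omega^{-k}(P)$ by $s_{k+1}=u_{k}-1$, while the top of $\Omega^{-(k+1)}(P)$ is the top of $I^{k}$. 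Consequently the socles of the cosyzygies of each fixed parity are obtained by iterating the descent map $x\mapsto x-\ell(I_{x})$ on the simple indices, and ``$I^{k}$ is projective-injective'' becomes the condition that the projective with top $S_{u_{k+1}}$ be injective. This gives an explicit description of each $\domdim(P)$ as the length of a deterministic walk that proceeds through projective-injective modules and halts at the first non-projective injective.

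The heart of the argument, and the step I expect to be the main obstacle, is to bound the total length $\sum_{P}(\domdim(P)+1)$ of all these walks simultaneously by $2n$. I would organise this as a global incidence count inside the functional graph (the ``resolution quiver'') of the descent map $x\mapsto x-\ell(I_{x})$ on the $n$ simples, which splits the dominant contribution of each $P$ into an even-parity walk starting at the socle of $P$ and an odd-parity walk starting one step below its top. Each dominant step is an occurrence of a projective-injective $I_{x}$ recorded by the submodule $\Omega^{-k}(P)\subsetneq I_{x}$ together with the complementary quotient $I_{x}/\Omega^{-k}(P)=\Omega^{-(k+1)}(P)$; the aim is to inject all such occurrences into a set of cardinality $2n-\Def(A)=n+p$, for instance by charging each projective-injective module twice (once per parity) and each non-projective injective once at the termination of a walk. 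Making this injection precise is the crux: because the underlying map is a function, walks of different projectives \emph{merge} once they meet and thereafter coincide, so one must control this confluence to avoid overcounting the shared projective-injective terms, and one must verify that within the dominant range the walks do not revisit indices.

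Finally I would treat the line and cyclic cases separately, since only for CNakayama algebras do the indices wrap around and can a descent walk re-enter a region it has already traversed; here the non-selfinjectivity hypothesis, which forces $p<n$ and hence the existence of a non-projective injective on every cycle of the resolution quiver, is exactly what breaks the periodicity and keeps the global count at $2n-\Def(A)$ rather than larger. An induction on $\Def(A)$, peeling off one non-projective injective at a time and tracking how $\sdomdim$ changes, is the natural fallback for organising the confluence analysis should the direct injection prove unwieldy.
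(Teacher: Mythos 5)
You reduce the statement to the correct combinatorial target, $\sum_{P}(\domdim(P)+1)\le 2n$, and your charging scheme (each projective-injective counted twice, each termination counted once, for a total of $2p+\Def(A)=n+p=2n-\Def(A)$) is exactly the accounting that makes the paper's proof work. But the proposal stops precisely at the point where the proof has to happen: you name the injection of ``occurrences of projective-injectives in dominant ranges'' into that set of size $n+p$ as ``the crux'' and ``the main obstacle,'' observe correctly that walks in the functional graph of $x\mapsto x-\ell(I_x)$ can merge and that this threatens the count, and then do not resolve the difficulty (the fallback induction on $\Def(A)$ is likewise only gestured at). As written this is an outline with the essential step missing, so it is not yet a proof.

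For comparison, the paper gets around the merging problem by changing the objects being walked on: instead of tracking individual cosyzygies (or simple indices), it groups modules into \emph{intervals} --- the set ${}_{P}{\Int}$ of all indecomposables with the same top as a projective $P$, and the set $\Int_I$ of all indecomposables with the same socle as an injective $I$ --- and observes that taking syzygies sends every non-projective member of one interval into a single well-defined next interval. The resulting step maps $\phi$ and $\phi'$ on intervals are mutually inverse where defined, so the chain of intervals starting at $\Int_{L_i}$ for an injective non-projective $L_i$ is reversible; chains starting at distinct $L_i$ therefore cannot merge, and each projective-injective module lies in at most one chain, contributing exactly two intervals to it. This yields $\codomdim(L_i)\le 2r_i+1$ with $\sum_i r_i\le n-\Def(A)$, and summing gives $\scodomdim(A)\le 2n-\Def(A)$, which transfers to $\sdomdim$ by passing to the opposite algebra. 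If you want to complete your argument, the interval device (or some equivalent way of making the descent map invertible on the relevant data) is the missing idea; without it, the confluence of walks you worry about is a real obstruction, not a technicality.
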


As a corollary of this theorem, we get that a non-selfinjective Nakayama algebra with $n$ simple modules and dominant dimension at least $n$ has to be a D1-Nakayama algebra.
Thus we are directly lead to look at the classification of higher Auslander algebras among the D1-Nakayama algebras.

\begin{theorem*}
The following are equivalent for a Nakayama algebra with $n$ simple modules.
\begin{enumerate}
\item $A$ is a higher Auslander algebra of global dimension larger than or equal to $n$.
\item $A$ is a D1-CNakayama algebra having finite global dimension.
\item $A$ has dominant dimension at least $n$ and finite global dimension.
\end{enumerate}
Furthermore, for any $n \leq r \leq 2n-2$, there is a unique Nakayama algebra $A$ with global dimension $r$ that is a higher Auslander algebra.
\end{theorem*}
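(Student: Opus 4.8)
The plan is to prove the cyclic chain $(1)\Rightarrow(3)\Rightarrow(2)\Rightarrow(1)$ and then to extract the counting statement from the explicit list of algebras produced along the way. The implication $(1)\Rightarrow(3)$ requires nothing: by definition a higher Auslander algebra satisfies $\domdim(A)=\gldim(A)$, so if this common value is at least $n$ then certainly $\domdim(A)\ge n$ and $\gldim(A)<\infty$.

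For $(3)\Rightarrow(2)$ I would first note that finite global dimension excludes selfinjectivity, so the Corollary to the super dominant dimension inequality applies and gives $\Def(A)=1$; hence $A$ is a D1-Nakayama algebra. It remains to exclude the LNakayama case, and for this I would establish the auxiliary bound $\domdim(A)\le n-1$ for every LNakayama algebra: the simple projective module at the sink of the linear quiver is non-injective, and in its minimal injective coresolution each successive cosyzygy is pushed strictly toward the source, so the coresolution reaches the simple (hence non-projective) injective module at the source after at most $n-1$ steps. As $(3)$ forces $\domdim(A)\ge n$, the algebra cannot be LNakayama, so it is a D1-CNakayama algebra of finite global dimension, which is $(2)$.

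The implication $(2)\Rightarrow(1)$ is the core of the proof and, I expect, the main obstacle. Let $A$ be D1-CNakayama of finite global dimension. Being D1, it has a single indecomposable projective non-injective module $P$ and a single indecomposable injective non-projective module $I$, the other $n-1$ indecomposable projectives being projective-injective; hence $\domdim(A)=\domdim(P)$. The decisive structural point is that, since $I$ is the only non-projective indecomposable injective, the minimal injective coresolution of $P$ is projective-injective in every degree except its top one, where it equals $I$; in particular $\domdim(P)=\id(P)$. To upgrade this to the higher Auslander identity $\domdim(A)=\gldim(A)$ and to control the global dimension, I would analyse the minimal projective resolutions of the simple modules combinatorially: writing a uniserial module as an interval $[a,b]$ with top $S_a$ and socle $S_b$, the syzygy operator acts by $[a,b]\mapsto[b+1,\,a+c_a-1]$ and stops precisely when $b=a+c_a-1$. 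Imposing finiteness of global dimension, that is the absence of periodic syzygies, together with the defect-one condition, then forces the Kupisch series into an explicit finite list, along which one computes $\id(P)=\gldim(A)$ and verifies that $\domdim(A)=\gldim(A)$ runs through each value $n,n+1,\dots,2n-2$ exactly once.

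The counting assertion is then immediate from this classification: for every $r$ with $n\le r\le 2n-2$ the list contains exactly one algebra of global dimension $r$, and by the equivalence these are precisely the Nakayama algebras with $n$ simple modules that are higher Auslander of global dimension at least $n$. The delicate matter throughout is the completeness of the classification in the previous paragraph—certifying that the finiteness and defect-one constraints admit no Kupisch series beyond the anticipated family—and this is exactly where the interval bookkeeping for the syzygies must be pushed through carefully.
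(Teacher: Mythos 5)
Your outer architecture matches the paper's: the equivalences are reduced to the corollary of the super dominant dimension inequality ($\domdim(A)\geq n \Rightarrow \Def(A)=1$) plus a classification of D1-CNakayama algebras of finite global dimension. Your exclusion of the LNakayama case via a bound $\domdim \leq n-1$ (rather than the paper's $\gldim \leq n-1$) is a fine variant. However, there are two genuine gaps in your core implication $(2)\Rightarrow(1)$. First, your ``decisive structural point'' --- that the minimal injective coresolution of the unique projective non-injective $P$ is projective-injective in every degree except the last --- does not follow from the fact that $I$ is the only non-projective indecomposable injective. That fact only tells you that any non-projective term of the coresolution must \emph{equal} $I$; it does not rule out $I$ occurring in an intermediate degree (a cosyzygy can be a proper submodule of $I$ with the same socle, after which the coresolution continues). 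The claim is true, but it is essentially equivalent to the statement that $\domdim(A)=\findim(A)$ for these algebras, which the paper takes as a substantive input (Theorem \ref{findimtheorem}, quoted from \cite{Mar2}); you would need to prove it, not assert it. Note that once one has $\domdim(P)=\id(P)$, the identity $\id(P)=\id(A_A)=\Gdim(A)=\gldim(A)$ already yields the higher Auslander property without any classification, so this unproved claim is carrying more weight than your write-up acknowledges.

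Second, the classification itself --- ``imposing finiteness of global dimension together with the defect-one condition forces the Kupisch series into an explicit finite list'' --- is precisely the hard theorem (Theorem \ref{uniqueness proof}) and is left entirely unexecuted. The paper proves it by induction on $n$: it cuts out the vertex $n-1$ (the unique simple of projective dimension one), shows that $A\mapsto eAe$ gives a bijection $\mathcal{E}$ between explicit parameter sets $X_1$ and $X_2$ of D1-CNakayama algebras with $n$ and $n-1$ simple modules, and establishes the exact equality $\gldim(A)=\gldim(eAe)+2$ (the inequality $\leq$ is cited from \cite{BFVZ}; the reverse inequality requires an argument about where minimal projective resolutions terminate). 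Your proposed direct ``interval bookkeeping'' for syzygies of simples would have to deliver existence, uniqueness for each $r$ with $n\leq r\leq 2n-2$, \emph{and} completeness (no other D1-CNakayama algebra has finite global dimension), and you give no mechanism for any of these beyond naming the syzygy formula. You correctly identify this as the delicate point, but identifying a gap is not the same as closing it.
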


We also give formulas for the Kupisch series of those CNakayama algebras that are higher Auslander algebras of global dimension larger than or equal to $n$.
The previous theorem motivates in particular the conjecture that for a given $n$ and $r$ with $2 \leq r \leq 2n-2$, there exists a higher Auslander algebra among the Nakayama algebras with $n$ simple modules and global dimension $r$. The theorem confirms the conjecture for half of the values for $r$, we briefly discuss the conjecture at the end of this article.

For $n \geq 2$ and $1 \leq m \leq n-1$, we define the algebras $\mathcal{Z}_{n,m}$ as the unique CNakayama algebra with $n$ simple modules and global dimension $n+m-1$ that is a higher Auslander algebra as in the previous theorem.

By \cite{Mad}, a CNakayama algebra has finite global dimension if and only if there is a simple module of even projective dimension.
In \cite{MM} we proved that a Nakayama algebra with $n$ simple modules and having a simple module of even projective dimension has global dimension bounded by $n+m-1$ when $m \geq 1$ is chosen minimal such that there is a simple module of projective dimension $2m$.
It was left open whether the bound $n+m-1$ is attained for each $1 \leq m \leq n-1$, and in this article we show that this bound is indeed uniquely attained for those values.

\begin{theorem*}
Let $A$ be a CNakayama algebra with $n$ simple modules and finite global dimension.
Choose $m$ minimal such that there is a simple $A$-module with projective dimension $2m$ and assume $A$ has global dimension $n+m-1$. Then $A$ is isomorphic to $\mathcal{Z}_{n,m}$. Furthermore, for any $n \geq 2$ and $1 \leq m \leq n-1$, the minimal projective dimension among the simple $\mathcal{Z}_{n,m}$-modules with even projective dimension is $2m$.
\end{theorem*}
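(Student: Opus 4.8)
The plan is to reduce the first assertion to the uniqueness clause already established in the second main theorem, and to treat the second assertion as a direct computation with the explicit Kupisch series of $\mathcal{Z}_{n,m}$.

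First I would pin down the admissible range of $m$. Since $A$ is non-semisimple CNakayama of finite global dimension it is non-selfinjective (a selfinjective non-semisimple algebra has infinite global dimension), so the first main theorem and its corollary apply. No simple module over a non-semisimple CNakayama algebra is projective, so the minimal even projective dimension $2m$ satisfies $m \geq 1$; existence of $m$ is guaranteed by \cite{Mad}. By Gustafson's bound $\gldim(A) = n+m-1 \leq 2n-2$, whence $m \leq n-1$. Thus $r := n+m-1$ lies in the range $n \leq r \leq 2n-2$ covered by the second main theorem, and $\mathcal{Z}_{n,m}$ is defined.

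The heart of the first assertion is to show that attaining the bound forces $\domdim(A) \geq n$. Granting this, $A$ has dominant dimension at least $n$ and finite global dimension, so condition (3) of the second main theorem holds and $A$ is a higher Auslander algebra of global dimension $r \geq n$; by the corollary to the first main theorem $A$ is moreover a D1-CNakayama algebra. The uniqueness clause of the second main theorem, together with the definition of $\mathcal{Z}_{n,m}$, then gives $A \cong \mathcal{Z}_{n,m}$. To obtain $\domdim(A) \geq n$ I would revisit the proof of $\gldim(A) \leq n+m-1$ in \cite{MM} and extract its equality case: analysing the syzygies of the simple modules around the cycle, using that $\Omega^2 S_i$ is uniserial with top $S_{i+c_i}$ for the Kupisch numbers $c_i$ (equivalently, the combinatorics of the resolution quiver $i \mapsto i+c_i$), one shows that the value $n+m-1$ can be realised as a projective dimension only for a Kupisch series rigid enough to force every indecomposable projective non-injective module to have dominant dimension at least $n$. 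As a sanity check this is consistent with the first main theorem: if $\domdim(A)\geq n$ then each of the $\Def(A)$ indecomposable projective non-injective modules contributes at least $n$ to the super dominant dimension, so $n\cdot\Def(A)\leq\sdomdim(A)\leq 2n-\Def(A)$, which forces $\Def(A)=1$. This equality analysis of the \cite{MM} bound is the main obstacle.

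For the second assertion I would work entirely inside $\mathcal{Z}_{n,m}$ using the explicit Kupisch series recorded earlier in the article. Computing the projective dimensions of all simple $\mathcal{Z}_{n,m}$-modules through the recursion for $\Omega^2 S_i$ above, I would list the resulting values, isolate the even ones, and verify that their minimum equals $2m$ while no simple has even projective dimension strictly below $2m$. This is a finite bookkeeping computation with the prescribed series; besides establishing the stated equality it confirms that $\mathcal{Z}_{n,m}$ genuinely satisfies the hypotheses of the first assertion (minimal even projective dimension index $m$ and global dimension $n+m-1$), so the two parts of the theorem are mutually consistent and the map $(n,m)\mapsto\mathcal{Z}_{n,m}$ is well defined.
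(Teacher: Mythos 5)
Your overall reduction is the right one and matches the paper's: establish that $A$ has defect $1$, then invoke the uniqueness statement of Theorem \ref{uniqueness proof} to conclude $A \cong \mathcal{Z}_{n,m}$. But the crucial step --- that attaining the bound $\gldim(A)=n+m-1$ forces $\Def(A)=1$ (equivalently, in your formulation, $\domdim(A)\geq n$) --- is exactly what you leave unproven, deferring it to an unspecified ``equality analysis'' of the bound from \cite{MM} and explicitly flagging it as the main obstacle. That is the whole content of the theorem; without it the argument is a plan, not a proof. The paper closes this gap with a concrete counting argument in the injective resolution quiver: letting $d$ be the number of simple modules that are not $\psi$-regular and $d'$ the smallest integer with $\psi^{d'}(S)$ regular for all $S$, one has $\gldim(A)\leq 2n+2m-2d-1$ from the proof of Theorem 2.2 in \cite{MM} and $\gldim(A)\leq 2d'$ from Gustafson's argument. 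Combined with $\gldim(A)=n+m-1$ these give $2d-1\leq \gldim(A)\leq 2d'\leq 2d$, forcing $d'=d$, which means the $d$ non-regular simples form a single $\psi$-chain, so the injective resolution quiver has a unique source and $\Def(A)=1$ by Lemma \ref{sinklemma}. Note also that the paper never needs to pass through ``$\domdim(A)\geq n$'' at this point; your sanity check via $n\cdot\Def(A)\leq 2n-\Def(A)$ only shows the two conditions are consistent, it does not help establish either.

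For the second assertion your proposed ``finite bookkeeping computation'' with the explicit Kupisch series of $\mathcal{Z}_{n,m}$ is also not carried out, and it would be genuinely painful: the formulas for $(a,s)$ involve floors of $\frac{2n}{n-m+1}$ and tracking the orbit $i\mapsto i+c_i$ through them is not routine. The paper instead proves the statement structurally for every D1-CNakayama algebra of finite global dimension (Proposition \ref{gldimD1propo}): by Lemma \ref{OmegaTwo} there is a unique simple $U=S_{n-1}$ with $\pd(U)=1$ and a unique simple $V=S_{s-1}$ with $\Omega^2(V)$ of length two, all other $\Omega^2(S_i)$ are simple, so the odd-projective-dimension simples are exactly $U,\psi(U),\dots$ with $\pd(\psi^i(U))=2i+1$ and the even ones are exactly $V,\psi(V),\dots$ with $\pd(\psi^i(V))=2i+\pd(V)$; comparing the maxima of the two chains with $\gldim(A)$ yields both $\gldim(A)=n+m-1$ and $\pd(V)=2m$ simultaneously. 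You should either carry out your computation in full or adopt this structural route.
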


This theorem improves the inequality proved in \cite{MM} and applies for example to give a new improved inequality for the global dimension of quasi-hereditary Nakayama algebras. The second named author is thankful to Emre Sen for useful discussions. Several results were suggested by experiments with the GAP-package QPA, see \cite{QPA}.

\section{Preliminaries}

Assume all algebras are finite dimensional over a field $K$, connected and non-semisimple if nothing is stated otherwise. All modules are assumed to be finitely generated.
We assume the reader is familiar with the basics of representation theory and homological algebra of finite dimensional algebras and especially Nakayama algebras.
We refer for example to the textbooks \cite{AnFul}, \cite{ARS} and \cite{SkoYam} that treat the necessary basics and have chapters on Nakayama algebras.
Note that Nakayama algebras are also often called serial algebras (for example in the book \cite{AnFul}).
For simplicity we assume that our Nakayama algebras are given by quiver and relations and their vertices are labelled from $0$ to $n-1$ when the algebra has $n$ simple modules. Thus we can view the vertices for CNakayama algebras as elements in $\mathbb{Z}/\mathbb{Z}n$ and the quiver of a CNakayama algebra is an oriented cycle. Also in the LNakayama case we sometimes do computations with vertex labels modulo $n$.
Note that assuming that our algebras are given by quiver and relations is no loss of generality in case the field $K$ is algebraically closed since all algebras are in this case Morita equivalent to a quiver algebra and all our notions are invariant under Morita equivalence. For an algebra $A$, let $D \coloneqq \Hom_K(-,K)$ denote the natural duality of $A$, and let $J$ denote the Jacobson radical of $A$.
Recall that Nakayama algebras can be defined as the quiver algebras having one of the following quivers.
The quiver of a CNakayama algebra:
$$Q_{C_n}=\begin{xymatrix}{ &  \circ^0 \ar[r] & \circ^1 \ar[dr] &   \\
\circ^{n-1} \ar[ur] &     &     & \circ^2 \ar[d] \\
\circ^{n-2} \ar[u] &  &  & \circ^3 \ar[dl] \\
   & \circ^5 \ar @{} [ul] |{\ddots} & \circ^4 \ar[l] &  }\end{xymatrix}$$
\newline
\newline
The quiver of an LNakayama algebra:
$$Q_{L_n}=\begin{xymatrix}{ \circ^0 \ar[r] & \circ^1 \ar[r] & \circ^2 \ar @{} [r] |{\cdots} & \circ^{n-2} \ar[r] & \circ^{n-1}}\end{xymatrix}$$

The \emph{Kupisch series} of a Nakayama algebra with $n$ simple modules is by definition the list $[c_0,c_1,\dots,c_{n-1}]$ in this order, where $c_i$ is the vector space dimension of the indecomposable projective module $e_i A$. Note that a Nakayama algebra has a line as a quiver if and only if $c_{n-1}=1$ and that the Kupisch series is only determined up to cyclic shifts for CNakayama algebras. The Kupisch series uniquely determines the algebra and a Nakayama algebra is selfinjective if and only if the Kupisch series is constant.

The \emph{dominant dimension} $\domdim(M)$ of a module $M$ with minimal injective coresolution $$0 \to M \to I_0 \to I_1 \to \dots \to I_i \to \dots$$ is defined as zero in case $I_0$ is not projective and equal to $$\sup \{ n \mid I_i \text{ is projective for } i=0,1,\dots,n \}+1$$ in case $I_0$ is projective.
The codominant dimension $\codomdim(M)$ of a module is defined as the dominant dimension of $D(M)$ as an $A^{\op}$-module.
The dominant dimension of an algebra $A$ is defined as the dominant dimension of the regular module and the codominant dimension of $A$ as the codominant dimension of $D(A)$. It can be shown that the dominant and codominant dimension of $A$ coincide, see for example section 3 in \cite{Yam}. An algebra is called a \emph{QF-3 algebra} in case it has dominant dimension at least one. Nakayama algebras are always QF-3 algebras, see for example \cite[Theorem 32.2]{AnFul}. An algebra is called a \emph{Gorenstein algebra} in case the injective dimensions of the regular module as a right and left module coincide and are finite. The common value is then called the \emph{Gorenstein dimension}. In case the algebra has finite global dimension, the Gorenstein dimension is finite and coincides with the global dimension. The \emph{finitistic dimension} of an algebra is defined as the supremum of all projective dimensions of modules having finite projective dimension. In case the Gorenstein dimension is finite, the finitistic dimension coincides with the Gorenstein dimension, see for example \cite[Lemma 2.3.2]{Che}. The global dimension, Gorenstein dimension, dominant dimension, and finitistic dimension of an algebra $A$ are denoted by $\gldim(A)$, $\Gdim(A)$, $\domdim(A)$, and $\findim(A)$ respectively.

For $M$ a module and $i>0$, we let $\Omega^i(M)$ denote the $i$th syzygy in a minimal projective resolution of $M$. Similarly $\Omega^{-i}(M)$ denotes the $i$th cosyzygy in a minimal injective coresolution of $M$. A special feature of Nakayama algebras is that syzygies and cosyzigies of an indecomposable module are indecomposable or zero. The projective dimension and injective dimension of $M$ are denoted by $\pd(M)$ and $\id(M)$ respectively. We let $\soc(M)$ denote the socle of $M$ and $\top(M)$ denote the top of $M$.

Following \cite{Rin} we define the \emph{resolution quiver} of a CNakayama algebra $A$ with $n$ simple modules as the quiver with vertices $i=0,\dots,n-1$ (corresponding to the simple $A$-modules $S_i$) and there is an arrow from $i$ to $j$ if and only if $\tau(\soc(P(S_i)))=S_j$, where $\tau$ is the Auslander-Reiten translation and $P(S_i)$ denotes the projective cover of the simple module $S_i$. Note that the condition $\tau(\soc(P(S_i)))=S_j$ is equivalent to $j=i+c_i$ (modulo $n$). The resolution quiver has no sinks and each connected component contains a unique (minimal) cycle. The \emph{injective resolution quiver} of $A$ is defined to be the resolution quiver of the opposite algebra of $A$.

A point $i$ in the resolution quiver is called \emph{black} in case $S_i$ has projective dimension at least two and it is called \emph{red} otherwise. A cycle in this resolution quiver is called \emph{black} in case every simple module in the cycle has projective dimension at least two. A black point in a black cycle is called \emph{cyclically black}.
Following \cite{S}, the \emph{weight $w(C)$ of a cycle} $C$ in the resolution quiver of $A$ is defined as $w(C):= \sum\limits_{i=1}^{l}{\frac{c_{r_i}}{n}}$, when the cycle contains the modules $S_{r_i}$ for $i=1,\dots,l$ and the $c_i$ denote the dimensions of the projective covers of $S_i$. It is known that the weight of each cycle is the same, see for example \cite{S}. We thus speak of the weight of the resolution quiver of $A$.

\begin{theorem} \label{Shentheorem}
Let $A$ be a CNakayama algebra.
\begin{enumerate}
\item $A$ has finite global dimension if and only if the resolution quiver of $A$ is connected and the weight of the resolution quiver is 1.
\item $A$ has finite Gorenstein dimension if and only if every cycle in the resolution quiver of $A$ is black.
\end{enumerate}
\end{theorem}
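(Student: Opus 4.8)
The common engine for both parts is the behaviour of the even syzygies of the simple modules. If $M$ is indecomposable with $\top(M)=S_a$ and length $\ell \le c_a$, then $M$ is a quotient of $P(S_a)$ and $\Omega(M)$ is the unique submodule of $P(S_a)$ of length $c_a-\ell$, so $\top(\Omega(M))=S_{a+\ell}$. Applying this twice to $M=S_i$ (where $\Omega(S_i)=\rad P(S_i)$ has top $S_{i+1}$ and length $c_i-1$) I would show that $\Omega^2(S_i)$ is zero or indecomposable with $\top(\Omega^2(S_i))=S_{i+c_i}=S_{f(i)}$ and length $c_{i+1}-c_i+1$, where $f(i)=i+c_i \pmod{n}$ is the function defining the resolution quiver. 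In particular $S_i$ is red, i.e.\ $\pd(S_i)\le 1$, precisely when $c_{i+1}=c_i-1$, and iterating gives $\top(\Omega^{2m}(S_i))=S_{f^m(i)}$ for as long as the syzygy is nonzero. Thus the path $i\to f(i)\to f^2(i)\to\cdots$ records the tops of the even syzygies of $S_i$, and $\pd(S_i)<\infty$ if and only if this sequence of even syzygies terminates in $0$.

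Next I would record that every weight is a positive integer. Writing a cycle as $r_1\to\cdots\to r_l\to r_1$ with $r_{j+1}\equiv r_j+c_{r_j}$, the telescoping sum $\sum_j(r_{j+1}-r_j)\equiv 0$ forces $\sum_j c_{r_j}\equiv 0 \pmod{n}$, so $w(C)=\tfrac1n\sum_j c_{r_j}\in\mathbb{Z}_{>0}$; the equality of weights across cycles is the invariance recalled from \cite{S}. For part (1), the forward direction uses that $\gldim(A)<\infty$ forces $\pd(S_i)<\infty$ for every $i$, so every even-syzygy sequence terminates; I would show that this is incompatible both with two distinct cycles and with a cycle of weight $\ge 2$, by tracking the lengths of the nonzero modules $\Omega^{2m}(S_i)$ around a cycle via the recursion from the first paragraph and observing that termination forces $\sum_{r_j\in C}c_{r_j}=n$ and connectivity of the quiver. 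The converse is the substantive direction: assuming connectedness and weight $1$, i.e.\ $\sum_{r_j\in C}c_{r_j}=n$, I would prove that the syzygy lengths decrease on average around the loop and hence every $\Omega^{2m}(S_i)$ eventually vanishes, giving $\gldim(A)<\infty$.

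For part (2), finiteness of the Gorenstein dimension is equivalent to finiteness of the finitistic dimension together with every module of infinite projective dimension being Gorenstein-projective, and over a Nakayama algebra the indecomposable Gorenstein-projective modules are, by \cite{Rin}, exactly those whose syzygies remain nonzero and periodic, which by the even-syzygy computation are precisely the simples supported on cycles all of whose vertices are black. I would therefore argue that $\Gdim(A)<\infty$ holds if and only if no simple of infinite projective dimension fails to be Gorenstein-projective, and translate this, using the local red/black criterion ``$c_{i+1}=c_i-1$ versus $c_{i+1}\ge c_i$'' from the first paragraph, into the condition that every cycle of the resolution quiver is black.

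The main obstacle is the converse of part (1): converting the global combinatorial condition ``connected with weight $1$'' into termination of all even-syzygy sequences. The difficulty is that the length recursion $\ell\mapsto c_{a+\ell}-c_a+\ell$ feeds the current length back into the Kupisch subscript, so the net decrease is only visible after summing $c_{r_j}$ around a full loop of the cycle, and one must separately rule out the syzygy lengths stabilising at a positive value rather than reaching $0$.
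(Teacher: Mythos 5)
The paper does not actually prove this theorem: the stated proof is the citation ``See \cite{S}'', so the only comparison available is between your sketch and the published arguments of Shen (building on \cite{Gus} and \cite{Rin}). Your opening computation is correct and is precisely the engine used there: $\Omega^2(S_i)$ is zero or indecomposable with top $S_{f(i)}$, $f(i)=i+c_i$, and length $c_{i+1}-c_i+1$; the tops of the even syzygies travel along the resolution quiver; and the weights are positive integers. But as a proof the proposal has a genuine gap, and you name it yourself: the implication ``connected and weight $1$ implies finite global dimension'' is only announced (``I would prove that the syzygy lengths decrease on average around the loop''), not carried out, and this is the substantive content of part (1). The standard way to close it is to make the length bookkeeping exact rather than ``on average'': if $\ell_m$ is the length of $\Omega^{2m}(S_i)$ and $t_m=f^m(i)+\ell_m$ marks the position just past its socle, then the recursion $\ell\mapsto c_{a+\ell}-c_a+\ell$ becomes $t_{m+1}=f(t_m)$, so $\ell_m$ is the gap between the $f$-orbits of $i$ and of $i+1$ (lifted to $\mathbb{Z}$), and $\pd(S_i)<\infty$ exactly when these orbits merge; connectedness together with weight $1$ is then what forces every pair of orbits to merge. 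Without an argument of this kind that rules out the lengths stabilising at a positive value, part (1) is not proved.

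Part (2) has the analogous problem one layer up. You reduce finiteness of the Gorenstein dimension to ``the finitistic dimension is finite and every indecomposable of infinite projective dimension is Gorenstein-projective,'' but that equivalence is itself the nontrivial content of the analysis in \cite{Rin} and \cite{S}, and as literally stated it is too strong: for $A$ to be Gorenstein it is not necessary that a module of infinite projective dimension be Gorenstein-projective, only that its sufficiently high syzygies be so. So both halves of your proposal draw the correct picture but defer the decisive steps to exactly the sources the theorem is being quoted from; since the paper itself offers no proof, supplying one would require filling these two gaps in full.
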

\begin{proof}
See \cite{S}.
\end{proof}

Recall that by the Morita-Tachikawa correspondence an algebra $A$ has dominant dimension at least two if and only if $A \cong \End_B(M)$ for an algebra $B$ with a generator-cogenerator $M$ of $\mod B$. In this case $B$ is uniquely determined by $A$. We call $B$ the \emph{base} algebra of $A$. In case $A$ has dominant dimension at least two and selfinjective base algebra, $A$ is called a \emph{Morita algebra} following \cite{KerYam}.
In case $M$ is a generator-cogenerator such that $A \cong \End_B(M)$ has finite global dimension equal to the dominant dimension, $A$ is called a \emph{higher Auslander algebra} and $M$ a \emph{cluster tilting object}, following \cite{Iya}.
In case $M$ is a generator-cogenerator such that $A \cong \End_B(M)$ has finite Gorenstein dimension equal to the dominant dimension, $A$ is called a \emph{minimal Auslander-Gorenstein algebra} and $M$ a \emph{precluster tilting object}, following \cite{IyaSol}.

If $x$ is a real number, we use the notation $[x]$ to denote the greatest integer less than or equal to $x$ and $[[x]]$ to denote the least integer greater than or equal to $x$. We sometimes use the notation $\equiv_n$ to denote congruence modulo $n$.

\section{Bounds for the dominant dimension of Nakayama algebras}

Recall our definition of the super dominant dimension of an algebra as the sum of all dominant dimensions of the indecomposable projective non-injective modules. Dually, the \emph{super codominant dimension} $\scodomdim(A)$ of an algebra $A$ is defined as the sum of all codominant dimension of the indecomposable injective non-projective $A$-modules. Note that $\scodomdim(A)=\sdomdim(A^{op})$.

\begin{theorem} \label{inequality}
Let $A$ be a non-selfinjective Nakayama algebra with $n$ simple modules.
Then
$$\sdomdim(A) \leq 2n-\Def(A).$$
\end{theorem}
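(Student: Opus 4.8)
The plan is to turn $\sdomdim(A)$ into a counting problem for minimal injective coresolutions. Let $p$ denote the number of indecomposable projective-injective modules; since the indecomposable injectives and the indecomposable projectives are each $n$ in number, one has $\Def(A)=n-p$, and the number of indecomposable projective non-injective modules is also $n-p$, so the assertion is equivalent to $\sdomdim(A)\le n+p$. By definition $\domdim(P)$ counts the initial stretch $I_0,\dots,I_{d-1}$ of projective-injective terms in a minimal injective coresolution $0\to P\to I_0\to I_1\to\cdots$ of a projective non-injective $P$, where $I_j$ is the injective envelope of the cosyzygy $\Omega^{-j}(P)$. Hence $\sdomdim(A)=\#\{(P,j):P\text{ projective non-injective},\ 0\le j<\domdim(P)\}$, and I would study the assignment $(P,j)\mapsto\Omega^{-j}(P)$, whose values are exactly the indecomposable modules with projective-injective envelope occurring in these dominant stretches.

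The combinatorial engine is the description of cosyzygies in a Nakayama algebra, where each indecomposable module is recorded by its top index and its length and cosyzygies of indecomposables are indecomposable. I would first check two facts: (i) the socle index of $\Omega^{-1}(M)$ equals the top index of $M$ minus $1$; and (ii) if the injective envelope of $M$ is the projective-injective $P_t$, then $\Omega^{-1}(M)$ is a proper quotient of $P_t$, so it has top $S_t$ with $t$ in the set $T$ of top indices of the indecomposable projective-injectives, where $|T|=p$. From (ii) it follows that for $j\ge 1$ every module $\Omega^{-j}(P)$ in a dominant stretch has top in $T$. The crucial structural step is an \emph{injectivity} statement for $(P,j)\mapsto\Omega^{-j}(P)$. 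This rests on the observation that each index in $T$ is the top of a unique indecomposable projective-injective, so any module with projective-injective envelope has a unique such predecessor under $\Omega^{-1}$; combined with the fact that a projective non-injective module can never reappear as a higher cosyzygy inside a dominant stretch (its top would have to lie in $T$), this lets one reconstruct $P$ and $j$ from $\Omega^{-j}(P)$.

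With injectivity in hand the image is a disjoint union of $\Omega^{-1}$-paths, one rooted at each projective non-injective $P$ with $\domdim(P)\ge 1$. I would split the image into the roots $A_0$ (the starting projectives, which have pairwise distinct tops lying outside $T$, so $|A_0|\le n-p$) and the higher cosyzygies $A_+$ (tops in $T$). It then remains to prove $|A_+|\le 2p$, which gives $\sdomdim(A)=|A_0|+|A_+|\le(n-p)+2p=n+p$. Here I would exploit fact (i): along any path the socle index strictly decreases, since the socle index of $\Omega^{-(j+1)}(P)$ equals that of $\Omega^{-j}(P)$ minus the length of $\Omega^{-j}(P)$. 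Tracking the pair (top index, socle index) shows that passing to the next cosyzygy is governed by the injective map $(a,s)\mapsto(t^{\ast}(s),a-1)$, where $t^{\ast}(s)\in T$ is the top of the projective-injective with socle $S_s$; this already forces, within a single path, the tops to be distinct in each of the two parities, and should be the mechanism bounding the total number of higher cosyzygies, once one accounts for how repeated occurrences of a top in $A_+$ are paid for by extra roots in $A_0$.

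The main obstacle is exactly this final count. The purely formal identities available (that $|A_+|$ equals the number of non-leaf nodes, that a node is determined by its top and socle, and so on) are all circular, and the naive per-top or per-socle estimates genuinely fail: a single projective-injective can be the injective envelope of as many as $n-1$ distinct projectives, and a fixed top in $T$ can occur several times in $A_+$. The honest difficulty is to make the trade-off precise, namely that each additional higher cosyzygy sharing a given top forces an additional projective non-injective root, so that $|A_0|+|A_+|$ remains bounded by $n+p$. I expect the clean way to capture this to be the strict decrease of the socle index along each path together with the injectivity of the transition map on (top, socle)-pairs, turning the estimate into a statement about how the $p$ admissible socle values in $\mathbb{Z}/n$ can be consumed; carrying this out carefully, and handling the LNakayama and CNakayama cases where injective lengths behave slightly differently, is where the real work lies.
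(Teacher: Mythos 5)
Your framework is sound as far as it goes: the reduction of the claim to $\sdomdim(A)\le n+p$ (with $p$ the number of indecomposable projective-injectives), the identification of $\sdomdim(A)$ with the number of pairs $(P,j)$ in the dominant stretches, and the injectivity of $(P,j)\mapsto \Omega^{-j}(P)$ (via uniqueness of the predecessor of a module with top in $T$, plus the observation that a projective non-injective module has top outside $T$) are all correct, and they are essentially a module-level, dualized version of what the paper does with projective resolutions of the injective non-projective modules. But the proof has a genuine gap exactly where you say it does: the estimate $|A_+|\le 2p$ is never established, and since $\sdomdim(A)=(n-p)+|A_+|$, that estimate \emph{is} the theorem. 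The mechanisms you offer in its place do not close it. ``Strict decrease of the socle index'' is not meaningful for CNakayama algebras, where the indices live in $\mathbb{Z}/n\mathbb{Z}$ and can wrap around; and ``tops distinct in each parity within a single path'' is a per-path bound which, summed over the $n-p$ paths, gives something of the order $(n-p)(2p+1)$ rather than $n+p$. You correctly diagnose that the difficulty is a trade-off \emph{across} paths, but you do not supply the lemma that realizes it.

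The paper closes precisely this gap by working one level up, with \emph{intervals} rather than individual (co)syzygies: the top-class ${}_{Q}{\Int}$ and the socle-class $\Int_Q$ attached to each indecomposable projective-injective $Q$. Consecutive (co)syzygies in a dominant stretch move through consecutive intervals of a chain, the intervals in a single chain never repeat (because the transition map on intervals has a two-sided inverse and the initial interval is not in its image), and --- this is the decisive point your proposal lacks --- each projective-injective $Q$ occurs in \emph{at most one} chain, because from $Q$ one can reconstruct the entire chain containing it by running the interval maps in both directions. Hence each of the $p$ projective-injectives contributes at most two intervals to the union of all chains, giving $\codomdim(L_i)\le 2r_i+1$ with $\sum_i r_i\le p$, which is exactly the global bound $|A_+|\le 2p$ in your language. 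If you want to salvage your module-level version, you would need to prove the analogous two statements for your paths: that the alternating sequence of top-classes and socle-classes visited by one path has no repetitions, and that the sets of projective-injectives serving as injective envelopes along distinct paths are disjoint. Without them the count does not go through.
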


\begin{proof}
As we prefer to deal with projective resolutions rather than injective resolution we prove the dual inequality $\scodomdim(A) \leq 2n-\Def(A)$, which implies the theorem since $\Def(A)=\Def(A^{\op})$ and the opposite algebra of a Nakayama algebra is again a Nakayama algebra.
First note that every indecomposable injective non-projective $A$-module $L$ has finite codominant dimension, since otherwise the modules $\Omega^{i}(L)$ would have injective dimension equal to $i$, and thus the algebra $A^{\op}$ would have infinite finitistic dimension, which is absurd since $A^{\op}$ is representation-finite.

To every indecomposable projective module $P=e_i A$ we associate a set of indecomposable modules ${}_{P}{\Int}$, that we call the left interval (associated to $P$), as follows.
$$ {}_{P}{\Int}:= \{ M \in \mod A \mid \top(M)=\top(P) \}= \{ M = e_i A/ e_i J^k \mid k=1,\dots,c_i \}.$$ For Nakayama algebras every indecomposable injective module $I$ is a quotient module of an indecomposable projective module: $I=e_s A/ e_s J^t$. We associate to $I$ the set of indecomposable modules $$\Int_I:= \{ M \in \mod A \mid \soc(M)=\soc(I) \} = \{ M = e_s J^r /e_s J^t \mid r=0,\dots,t-1 \},$$ called the right interval (associated to $I$).
Note that $e_s J^r /e_s J^t \cong e_{s+r} A / e_{s+r} J^{t-r}$, and thus $\Int_I= \{ M = e_{s+r} A / e_{s+r} J^{t-r} \mid r=0,\dots,t-1 \}$.

We now define a map $\phi$ which takes as argument either a right interval $\Int_I$ associated to an indecomposable injective module $I$ or a left interval ${}_{P}{\Int}$ associated to an indecomposable projective-injective module $P$. The value of $\phi$ is a left interval or a right interval respectively.

For $I=e_s A/ e_s J^t$ indecomposable injective we define $$\phi(\Int_I):= {}_{P_{(I)}}{\Int},$$ where $P_{(I)}$ is the indecomposable projective module with $\top(P_{(I)})=\top(\Omega^1(\soc(I)))$ whenever $\soc(I)$ is not projective. In case $\soc(I)$ is projective, which can only happen for LNakayama algebras, we let $P_{(I)}$ be the unique indecomposable projective module with $\top(P_{(I)})$ injective.
Note that $\soc I \cong e_{s+t-1} A / e_{s+t-1} J$ and therefore whenever $\soc(I)$ is not projective we have $\top(\Omega^1(\soc(I)))$ is the simple module corresponding to vertex $s+t$. Thus $P_{(I)}=e_{s+t}A$. The same holds true when $\soc(I)$ is projective.
Note furthermore that for every non-projective module $M= e_{s+r} A / e_{s+r} J^{t-r} \in \Int_I$, we have $\top(\Omega^1(M))=\top(P_{(I)})$, and thus $\Omega^1(M) \in {}_{P_{(I)}}{\Int}$. When $\soc(I)$ is projective, every module in $\Int_I$ is projective.

Now for an indecomposable projective-injective module $P=e_i A$, we define $$\phi({}_{P}{\Int})\coloneqq \Int_P,$$ when we view $P$ also as an injective module.
We have for every non-projective $M=e_i A/e_i J^k \in {}_{P}{\Int}$ that $\soc(\Omega^1(M))=\soc(e_i J^k)=\soc(P)$, and thus $\Omega^1(M) \in \Int_P$.

By dual considerations we can define a map $\phi'$ which takes as argument either a left interval associated to an indecomposable projective module $P$ or a right interval ${\Int_I}$ associated to an indecomposable projective-injective module $I$. If $P$ is indecomposable projective, we define $\phi'({}_{P}{\Int}):=\Int_{I_{(P)}}$, where $I_{(P)}$ is the indecomposable injective module with $\soc(I_{(P)})=\soc(\Omega^{-1}(\top(P)))$ whenever $\top(P)$ is not injective. In case $\top(P)$ is injective, which can only happen for LNakayama algebras, we let $I_{(P)}$ be the unique indecomposable injective module with $\soc(I_{(P)})$ projective. If $I$ is indecomposable projective-injective, we define $\phi'(\Int_I):={}_{I}{\Int}$.

Let $E$ be a left or right interval. Since $\soc(\Omega^{-1}(\top(\Omega^1(\soc(I))))) \cong \soc(I)$ for each indecomposable injective module $I$ with $\soc(I)$ not projective, and $\top(\Omega^1(\soc(\Omega^{-1}(\top(P)))) \cong \top (P)$ for each indecomposable projective module $P$ with $\top(P)$ not injective, and $\soc(I)$ is projective if and only if $\top(P_{(I)})$ is injective, we have $\phi'(\phi(E))=E$ whenever $\phi(E)$ is defined and $\phi(\phi'(E))=E$ whenever $\phi'(E)$ is defined.

We can now define a bijection $\pi$ between the set of indecomposable injective non-projective modules $\mathcal{L}$ and the set of indecomposable projective non-injective modules $\mathcal{R}$.
For $I \in \mathcal{L}$ we repeatedly apply $\phi$ to $\Int_I$ until we obtain an interval ${}_{P}{\Int}$ with $P$ indecomposable projective non-injective.
Then we set $\pi(I):=P$ and this defines a bijection since we can use $\phi'$ to recover $I$ from $P$.

We can label such a chain of intervals as $L_i \to \pi(L_i)$ for $L_i \in \mathcal{L}$. The intervals in this chain are $$\Int _{L_i},\phi(\Int_{L_i}),\phi^2(\Int_{L_i}),\dots,{}_{\pi(L_i)}{\Int}.$$ The intervals in this chain never repeat since $\phi'(\phi(E))=E$ whenever $\phi(E)$ is defined for an interval $E$ and $\Int _{L_i}$ is not in the image of $\phi$.
Since when a non-projective module $M$ appears in an interval $E$ then $\Omega(M)$ appears in $\phi(E)$, and the projective module $\pi(L_i)$ is not injective, the codominant dimension of $L_i$ is bounded above by one less than the number of intervals in the chain. If there are $r_i$ projective-injective modules appearing in this chain, then there are $2r_i+2$ intervals, so we can say $$\codomdim(L_i)\le 2r_i+1.$$

We show that an indecomposable projective-injective module $P$ can appear in at most one such chain of intervals, and if it does we call it \emph{bounded} and denote the number of bounded projective-injective modules by $b(A)$. Indeed, if one starts from a bounded projective-injective module and the left and right intervals associated to it, and constructs a chain by repeatedly applying $\phi'$ to the left and $\phi$ to the right, the construction will end when one obtains some interval $\Int _{L_i}$ with $L_i \in \mathcal{L}$ on the left and the corresponding interval ${}_{\pi(L_i)}{\Int}$ on the right, and thus one has constructed the unique $L_i \to \pi(L_i)$ chain containing this bounded projective-injective module.

We get
$$\scodomdim(A)=\sum_{i=1}^{\Def(A)} \codomdim(L_i) \leq \sum_{i=1}^{\Def(A)}(2r_i+1)=2b(A)+\Def(A).$$
Now note that $b(A) \leq n-\Def(A)$, and thus
$$\scodomdim(A) \leq 2b(A)+\Def(A) \leq 2(n-\Def(A))+\Def(A)=2n-\Def(A).$$
\end{proof}

\begin{corollary} \label{inequalitycor}
Let $A$ be a non-selfinjective Nakayama algebra with $n$ simple modules.
\begin{enumerate}
\item $\sdomdim(A) \leq 2n-2$.
\item $\domdim(A) \leq \frac{2n-2}{\Def(A)}.$
\item In case $A$ has dominant dimension $\geq n$, $A$ has $\Def(A)=1$.
\end{enumerate}
\end{corollary}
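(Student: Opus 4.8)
The plan is to deduce all three parts from Theorem~\ref{inequality}, with essentially all of the work concentrated in part (1) in the single case $\Def(A)=1$.

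First I would dispatch parts (2) and (3), which follow from part (1) by an averaging argument. Since $\Def(A)=\Def(A^{\op})$ (as recorded in the proof of Theorem~\ref{inequality}), the algebra $A$ has exactly $\Def(A)$ indecomposable projective non-injective modules. Every indecomposable projective-injective module has infinite dominant dimension, so $\domdim(A)=\min_P \domdim(P)$, the minimum being taken over the $\Def(A)$ projective non-injective modules $P$, each of which satisfies $\domdim(P)\ge\domdim(A)$ and (by the first paragraph of the proof of Theorem~\ref{inequality}, applied to $A^{\op}$) has finite dominant dimension. Hence
$$\Def(A)\cdot\domdim(A)\le\sum_{P}\domdim(P)=\sdomdim(A).$$
Granting part (1), this gives $\domdim(A)\le\sdomdim(A)/\Def(A)\le(2n-2)/\Def(A)$, which is (2). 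For (3), if $\domdim(A)\ge n$ then $n\le(2n-2)/\Def(A)$, so $\Def(A)\le 2-2/n<2$; since $A$ is non-selfinjective we have $\Def(A)\ge 1$, and therefore $\Def(A)=1$.

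It remains to prove (1). If $\Def(A)\ge 2$, then Theorem~\ref{inequality} already gives $\sdomdim(A)\le 2n-\Def(A)\le 2n-2$ and there is nothing more to do. The only remaining case is $\Def(A)=1$, where the theorem yields merely $\sdomdim(A)\le 2n-1$, so I must sharpen the bound by one. Here I would work with the dual quantity: by $\sdomdim(A)=\scodomdim(A^{\op})$ and $\Def(A^{\op})=\Def(A)=1$, it suffices to show $\scodomdim(B)\le 2n-2$ for a Nakayama algebra $B$ with $\Def(B)=1$, using the chain machinery from the proof of Theorem~\ref{inequality} directly. In that situation there is a unique indecomposable injective non-projective module $L_1$, a unique chain $\Int_{L_1}\to\cdots\to{}_{\pi(L_1)}\Int$, and exactly $n-1$ projective-injective modules in total; if the chain contains $r_1=b(B)$ of them, then $r_1\le n-1$ and $\codomdim(L_1)\le 2r_1+1\le 2n-1$.

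The improvement I would aim for is that the odd endpoint value cannot be attained together with $r_1=n-1$. Concretely, suppose for contradiction that $\scodomdim(B)=\codomdim(L_1)=2n-1$. Then $r_1=n-1$, so all $n-1$ projective-injective modules $P_1,\dots,P_{n-1}$ lie in the chain, and the bound $2r_1+1$ is attained. Attaining it forces the projective cover of the syzygy $\Omega^{2r_1}(L_1)$, which lies in the penultimate (right) interval $\Int_{P_{r_1}}$, to itself be injective; tracing the chain (the terminal step $\phi(\Int_{P_{r_1}})={}_{\pi(L_1)}\Int$ has $\pi(L_1)=P(\Omega^{2r_1+1}(L_1))$ projective non-injective) this projective-injective module is distinct from $P_1,\dots,P_{r_1}$, producing an $n$-th projective-injective module and contradicting that there are only $n-1$. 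Hence $\codomdim(L_1)\le 2n-2$, completing (1). The main obstacle is precisely this last step: verifying rigorously, via the bookkeeping of tops and socles along the single chain, that equality $\codomdim(L_1)=2r_1+1$ genuinely produces a projective-injective module not already counted among those carried by the chain. Everything else is a short deduction from Theorem~\ref{inequality}.
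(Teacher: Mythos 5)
Your treatment of parts (2) and (3), and of part (1) in the case $\Def(A)\ge 2$, is exactly the paper's: both follow formally from Theorem~\ref{inequality} together with $\Def(A)\cdot\domdim(A)\le\sdomdim(A)$. The divergence, and the problem, is the case $\Def(A)=1$ of part (1). The paper does not sharpen the chain argument there; it observes that for defect $1$ the statement reduces to $\domdim(A)\le 2n-2$ and cites \cite[Theorem 2.16]{Mar} for that inequality. You instead try to rule out $\codomdim(L_1)=2r_1+1$ when $r_1=n-1$ by claiming that attaining the bound forces the projective cover $P$ of $\Omega^{2r_1}(L_1)$ to be an indecomposable projective-injective module \emph{distinct} from the $r_1$ projective-injectives already carried by the chain, yielding an impossible $n$-th one. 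This is the gap, and it is not merely a missing verification: when $r_1=n-1$ the chain already contains \emph{every} indecomposable projective-injective module, so an injective $P$ is automatically equal to one of them, and no counting contradiction can arise. What actually has to be shown is that $P$ cannot coincide with any of the projective-injectives in the chain, i.e.\ that $P$ must be the unique non-injective projective $e_0A$ — which is precisely the assertion $\codomdim(L_1)\le 2n-2$ being proved, so the argument as described is circular. Note also that the chain machinery only records which intervals occur; it says nothing about \emph{where inside} a right interval $\Int_{Q}$ the syzygy $\Omega^{2k}(L_1)$ sits, and it is exactly that position which determines the even-indexed projective covers. Ruling out the coincidence requires tracking this second sequence of vertices alongside the resolution-quiver sequence, which is the genuine content of \cite[Theorem 2.16]{Mar} (and is consistent with the extremal algebra $[n,n+1,\dots,n+1]$, where $r_1=n-1$ and the resolution hits $e_0A$ precisely at step $2n-2$).

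So: parts (2), (3) and the $\Def(A)\ge 2$ case are correct and identical to the paper; the $\Def(A)=1$ case of (1) is a true statement, but your proposed proof of it does not work, and you should either supply the interleaving argument or, as the paper does, invoke \cite[Theorem 2.16]{Mar}.
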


\begin{proof}
\begin{enumerate}
\item For $\Def(A) \geq 2$, this inequality follows directly from Theorem \ref{inequality}.
For $\Def(A)=1$ the statement reduces to $\domdim(A) \leq 2n-2$, which was proven in \cite[Theorem 2.16]{Mar}.
\item This follows directly from (1), since $\Def(A) \cdot \domdim(A) \leq \sdomdim(A) \leq 2n-2$ gives
$$\domdim(A) \leq \frac{2n-2}{\Def(A)}.$$
\item This is a direct consequence of (2), since in case $\Def(A) \geq 2$ we have
 $$\domdim(A) \leq \frac{2n-2}{\Def(A)} \leq n-1.$$
\end{enumerate}

\end{proof}

Note that the inequality $\sdomdim(A) \leq 2n-2$ is optimal, since the $n$-CNakayama algebra with Kupisch series $[n,n+1,n+1,\dots,n+1]$ has super dominant dimension equal to $2n-2$. We note that our inequalities for the dominant dimension of Nakayama algebras obtained in this section are a strong improvement compared to the main result in \cite{Mar}. In \cite{Mar} it was proven that $\domdim(A) \leq 2 (n- \Def(A))$ when $n$ is the number of simple $A$-modules. For example for $\Def(A) \geq 2$ the inequality from \cite{Mar} just gives the bound $\domdim(A) \leq 2n-4$, while our new inequality from Corollary \ref{inequalitycor}(2) gives the much better result $\domdim(A) \leq \frac{2n-2}{2}=n-1$.

We remark that while the dominant dimension of an algebra $A$ coincides with the dominant dimension of the opposite algebra of $A$, this is no longer true for the super dominant dimension as we show in the next example.

\begin{example}
The Nakayama algebra with Kupisch series $[2, 4, 3, 3, 3]$ has super dominant dimension equal to $5$ but the super dominant dimension of the opposite algebra is equal to $4$.
\end{example}

\begin{question}
Is there a connected finite dimensional non-selfinjective algebra $A$ with $n$ simple modules and $\domdim(A) > 2n- \Def(A)$?
\end{question}

The question is motivated by a conjecture of Yamagata (see \cite{Yam}) that states that the dominant dimension of a general non-selfinjective algebra $A$ is bounded a function $f$ depending only on the number of simple modules of the algebra. To our knowledge no finite dimensional non-selfinjective algebra with $n$ simple modules and $\domdim(A)>2n-1$ is known, so one might wonder whether one can take $f(n)=2n$ for the function in Yamagata's conjecture.

\section{D1-Nakayama algebras with finite global dimension}\label{sectd1}

We call Nakayama algebras with a unique indecomposable projective non-injective module \emph{D1-Nakayamas algebras} since they are exactly those with defect $1$.
Note that the unique D1-LNakayama algebra for a given number of simple modules $n$ is the one with Kupisch series $[2,2,\dots,2,1]$, and it is easily seen that this algebra has global dimension $n-1$ and dominant dimension $n-1$.

The CNakayama algebras with exactly one projective non-injective indecomposable module are those with Kupisch series of the form $[a,a,\dots,a,a+1,a+1,\dots,a+1]$, where $a \geq 2$. This follows directly from the fact that an indecomposable projective module $e_i A$ is injective if and only if $c_{i-1} \leq c_i$, see for example \cite[Theorem 32.6]{AnFul}. We denote such a CNakayama algebra with this Kupisch series as $N_{n,a,s}$ when the algebra has $n \geq 2$ simple modules, the number $a$ is as in the Kupisch series and $s<n$ counts how often $a$ appears.
For example $[2,2,2,3,3]$ is denoted by $N_{5,2,3}$.

We use the following theorem, which is a special case of results in section 5 of \cite{Mar2}.

\begin{theorem} \label{findimtheorem}
Let $A$ be a representation-finite algebra with exactly one projective non-injective indecomposable module and dominant dimension at least one. Then the dominant dimension of $A$ equals the finitistic dimension.
\end{theorem}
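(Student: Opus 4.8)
The plan is to set $d := \domdim(A)$ and prove the two inequalities $\findim(A) \ge d$ and $\findim(A) \le d$ separately. Throughout, let $P$ denote the unique indecomposable projective non-injective module. Since every indecomposable projective-injective module has infinite dominant dimension and the dominant dimension of a direct sum is the minimum of the dominant dimensions of the summands, we have $\domdim(A) = \domdim(P) = d$. I would first record that $d$ is finite, by the argument already used in the proof of Theorem \ref{inequality}: the unique indecomposable injective non-projective module has finite codominant dimension (otherwise $A^{\op}$ would have infinite finitistic dimension, which is absurd since $A^{\op}$ is representation-finite), and $\domdim(A) = \codomdim(A)$ equals that codominant dimension, hence is finite. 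In particular the injective coresolution of $P$ contains a genuine non-projective injective term in position $d$.

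For the lower bound $\findim(A) \ge d$, I would construct an explicit module of projective dimension $d$ from the minimal injective coresolution $0 \to P \to I^0 \to I^1 \to \cdots$ of $P$. By definition of dominant dimension the terms $I^0,\dots,I^{d-1}$ are projective-injective, so splicing yields an exact sequence
$$0 \to P \to I^0 \to \cdots \to I^{d-1} \to \Omega^{-d}(P) \to 0$$
in which every term except $\Omega^{-d}(P)$ is projective. Read backwards this is a projective resolution of $\Omega^{-d}(P)$ of length $d$, so $\pd(\Omega^{-d}(P)) \le d$. It is minimal at its top end, because the leftmost differential $P \to I^0$ is an injective envelope and $P$, being non-injective indecomposable, cannot split off as a summand of the injective module $I^0$; hence the $d$-th syzygy of $\Omega^{-d}(P)$ is $P \ne 0$ and $\pd(\Omega^{-d}(P)) = d$, giving $\findim(A) \ge d$.

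For the upper bound $\findim(A) \le d$, which is the crux, I would take an indecomposable $M$ with $f := \pd(M) \ge 1$ and show $f \le d$. First, $\Omega^f(M)$ is a nonzero projective which must be \emph{non}-injective: otherwise the sequence $0 \to \Omega^f(M) \to P_{f-1} \to \Omega^{f-1}(M) \to 0$ would split and force $\Omega^{f-1}(M)$ to be projective, contradicting $\pd(\Omega^{f-1}(M)) = 1$; since syzygies of indecomposables are indecomposable, $\Omega^f(M) = P$. The key tool is the following \emph{claim}: if a module $Y$ embeds as an essential submodule of an indecomposable projective $Q$, then its injective envelope $E(Y)$ is projective-injective. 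Indeed $E(Y) = E(Q)$, and were $E(Y)$ non-projective then $Q$ would be non-injective, hence $Q = P$ by the defect-one hypothesis, whence $E(Y) = E(P) = I^0$ is projective because $\domdim(P) \ge 1$ --- a contradiction. As every syzygy $\Omega^i(M)$ with $1 \le i \le f$ lies in the radical of, and is therefore an essential submodule of, its indecomposable projective cover, the claim shows that $E(\Omega^i(M))$ is projective-injective for all such $i$.

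It then remains to combine this with the minimal injective coresolution of $P$. I would argue by induction that the syzygies $\Omega^{f-j}(M)$ track the cosyzygies $\Omega^{-j}(P)$: each $\Omega^{f-j}(M)$ embeds into $\Omega^{-j}(P)$ as a nonzero, hence socle-sharing, submodule of an indecomposable injective, so $E(\Omega^{f-j}(M)) = E(\Omega^{-j}(P)) = I^j$. For $0 \le j \le f-1$ the module $\Omega^{f-j}(M)$ is a genuine syzygy, so by the claim $I^j$ is projective-injective; since $\domdim(P) = d$ forces $I^d$ to be non-projective, this gives $f-1 < d$, that is $f \le d$. The main obstacle is precisely this comparison: the embeddings $\Omega^{f-j}(M) \hookrightarrow \Omega^{-j}(P)$ need not be equalities, and in the defect-one situation the syzygy chain can re-enter through $P$ itself, so keeping exact track of how the syzygies of $M$ sit inside the cosyzygies of $P$ requires the careful bookkeeping carried out in the general framework of Section 5 of \cite{Mar2}.
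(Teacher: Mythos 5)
The paper gives no argument for this theorem at all --- it is quoted as a special case of Section 5 of \cite{Mar2} --- so your proposal has to be judged on its own correctness. The easy half, $\findim(A)\ge\domdim(A)$, is essentially fine: splicing the first $d$ terms of the minimal injective coresolution of $P$ gives a length-$d$ projective resolution of $\Omega^{-d}(P)$, and since $P$ is indecomposable, non-injective and essential in $I^0$, one checks $P\subseteq\rad(I^0)$ (otherwise $P$ would surject onto an indecomposable projective summand of $I^0$, split, and be injective), whence $\Ext^d(\Omega^{-d}(P),\top(P))\neq 0$ and the resolution really has length $d$. Your phrase ``cannot split off as a summand'' is loose but points at this argument.

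The hard half, $\findim(A)\le\domdim(A)$, is where the content lies, and it is not established. Three concrete problems. (i) You use ``syzygies of indecomposables are indecomposable,'' which is a special feature of Nakayama algebras; the theorem concerns arbitrary representation-finite algebras, where $\Omega^f(M)$ is only a direct sum of copies of $P$ (this is repairable: an injective summand of $\Omega^f(M)\subseteq\rad(P_{f-1})$ would split off inside a radical, hence vanish). The same issue recurs when you treat $I^j$ as indecomposable. (ii) The claim that $\Omega^i(M)$, lying in the radical of its projective cover $Q$, is therefore an \emph{essential} submodule of $Q$ is false: $Y\subseteq Q$ is essential iff $\soc(Q)\subseteq Y$, and radical submodules can miss part of the socle (for the quiver with arrows $1\to 2$ and $1\to 3$ and $M=P_1/S_3$, the syzygy $\Omega(M)=S_3$ lies in $\rad(P_1)$ but is not essential there). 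The conclusion you actually need --- that $E(\Omega^i(M))$ is projective --- does hold in the defect-one situation, but via $\soc(\Omega^i(M))\subseteq\soc(Q)$, so that $E(\Omega^i(M))$ is a direct summand of $E(Q)$, and $E(Q)$ is projective because every indecomposable summand of $Q$ is either projective-injective or isomorphic to $P$. (iii) Most seriously, the inductive comparison $\Omega^{f-j}(M)\hookrightarrow\Omega^{-j}(P)$ that is supposed to force $I^0,\dots,I^{f-1}$ to be projective is never constructed: an extension of $\Omega^{f-j}(M)\to I^j$ over $P_{f-j-1}$ induces a map of cokernels landing in $\operatorname{coker}(\Omega^{f-j}(M)\to I^j)$, not in $\Omega^{-j-1}(P)$, and there is no reason for it to be injective. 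You concede that this step ``requires the careful bookkeeping carried out in \cite{Mar2}''; but that bookkeeping \emph{is} the proof of the hard inequality, so as written your argument establishes only $\findim(A)\ge\domdim(A)$ and defers the converse to the very reference the paper cites.
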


This theorem applies to all our Nakayama algebras with exactly one projective non-injective indecomposable module. The theorem implies that D1-Nakayama algebras have finite global dimension if and only if their global dimension coincides with the dominant dimension and they have finite Gorenstein dimension if and only if their Gorenstein dimension coincides with the dominant dimension.

We state some basic results about the D1-CNakayama algebras.
We need the following lemma that allows us to restrict to the algebras $N_{n,a,s}$ for $a \leq n-1$ when classifying such algebras with finite global dimension.

\begin{lemma} \label{gldiminfinitecrit}
Let $A$ be a CNakayama algebra with $n$ simple modules.
\begin{enumerate}
\item In case $c_i \geq n+1$ for the Kupisch series $[c_0,c_1,\dots, c_{n-1}]$ for all $i$, the algebra $A$ has infinite global dimension.
\item In case $a=n$ and $A=N_{n,a,s}$, the algebra $A$ has finite global dimension if and only if $A$ has Kupisch series $[n,n+1,n+1,\dots,n+1]$. In this case the global dimension of $A$ is $2n-2$.
\end{enumerate}
\end{lemma}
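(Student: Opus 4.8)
The plan is to read off the entire statement from the resolution quiver of $A$, relying on Theorem \ref{Shentheorem}(1): a CNakayama algebra has finite global dimension exactly when its resolution quiver is connected and has weight $1$. Recall that there is an arrow $i \to j$ in the resolution quiver precisely when $j \equiv i + c_i \pmod n$, and that the weight of any cycle $C$ through vertices $r_1,\dots,r_l$ equals $\sum_{t=1}^l c_{r_t}/n$, a value independent of the chosen cycle.

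For part (1), every connected component of the resolution quiver contains a cycle (there are no sinks and only finitely many vertices), so it suffices to note that if $c_i \geq n+1$ for all $i$, then each summand $c_{r_t}/n > 1$, whence every cycle has weight strictly greater than $1$. The weight can therefore never equal $1$, and Theorem \ref{Shentheorem}(1) forces infinite global dimension.

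For the equivalence in part (2), I would analyze the arrows explicitly for $A = N_{n,a,s}$ with $a=n$, where the Kupisch series is $[\underbrace{n,\dots,n}_{s},\underbrace{n+1,\dots,n+1}_{n-s}]$. A vertex $i$ with $c_i = n$ satisfies $i + c_i \equiv i$ and hence carries a loop, whereas a vertex with $c_i = n+1$ has an arrow $i \to i+1$. Thus vertices $0,\dots,s-1$ carry loops while $s,s+1,\dots,n-1$ flow forward into $0$. If $s \geq 2$, each vertex $j$ with $1 \leq j \leq s-1$ is an isolated loop: the only candidate external arrow into $j$ would come from $j-1$ via $c_{j-1}=n+1$, but $c_{j-1}=n$, so no such arrow exists and the resolution quiver is disconnected, giving infinite global dimension. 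If $s=1$, the quiver is the single path $1 \to 2 \to \cdots \to (n-1) \to 0$ together with a loop at $0$; it is connected with unique cycle the loop at $0$ of weight $c_0/n = 1$, so the global dimension is finite. This establishes the equivalence, the finite case being exactly the Kupisch series $[n,n+1,\dots,n+1]$.

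It remains to compute $\gldim(A) = 2n-2$ when $s=1$, and this syzygy bookkeeping is the main technical point. I would run the minimal projective resolution of the simple $S_0$, using $\Omega(e_iA/e_iJ^k) = e_{i+k}A/e_{i+k}J^{c_i-k}$. Because $n \equiv 0 \pmod n$, the even syzygies return to top $S_0$ with growing length, and I expect $\Omega^{2k}(S_0) = e_0A/e_0J^{k+1}$ for $0 \leq k \leq n-1$; in particular $\Omega^{2n-2}(S_0) = e_0A/e_0J^n = e_0A$ is projective while no earlier syzygy is, so $\pd(S_0) = 2n-2$. Since $A$ has finite global dimension and every Nakayama algebra of finite global dimension satisfies $\gldim \leq 2n-2$ by \cite{Gus}, the chain $2n-2 = \pd(S_0) \leq \gldim(A) \leq 2n-2$ yields $\gldim(A) = 2n-2$. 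The main obstacle is carefully tracking this recursion and verifying that the intermediate odd syzygies never become projective, so that the first projective syzygy really occurs at step $2n-2$.
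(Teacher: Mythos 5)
Your proof is correct and follows essentially the same route as the paper: both parts are read off from the resolution quiver via Theorem \ref{Shentheorem}(1), with the weight argument for (1) and the observation that each entry $n$ creates an isolated loop (forcing $s=1$ for connectivity) for (2). The only divergence is at the very end, where the paper simply cites \cite{Gus} for the value $\gldim(A)=2n-2$ of the algebra with Kupisch series $[n,n+1,\dots,n+1]$, whereas you verify it by the explicit syzygy recursion $\Omega^{2k}(S_0)=e_0A/e_0J^{k+1}$ combined with Gustafson's upper bound; your recursion checks out, so this is a harmless self-contained substitute.
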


\begin{proof}
\begin{enumerate}
\item This follows immediately from Theorem \ref{Shentheorem}(1) since the weight of the resolution quiver can not be equal to one when $c_i \geq n+1$ for all $i$.
\item Each entry $n$ in the Kupisch series constitutes a cycle in the resolution quiver. By Theorem \ref{Shentheorem}(1), finite global dimension implies a connected resolution quiver, so in case $a=n$ and $A=N_{n,a,s}$ finite global dimension implies $s=1$. The Nakayama algebra with Kupisch series $[n,n+1,n+1,\dots,n+1]$ has global dimension equal to $2n-2$, see for instance \cite{Gus}.
\end{enumerate}
\end{proof}

\begin{proposition} \label{basicsabout1nak}
Let $A=N_{n,a,s}$ be a D1-CNakayama algebra.
\begin{enumerate}
\item The unique projective non-injective indecomposable module is $e_0 A$ and the unique injective non-projective indecomposable module is $e_s A/e_s J^a$.
\item The dominant dimension and the finitistic dimension of $A$ equal the codominant dimension of the module $e_s A/e_s J^a$.
\item The dominant dimension of $A$ is equal to one if and only if $s+a \equiv 0$ (modulo $n$), and in this case the algebra has infinite Gorenstein dimension.
\item The dominant dimension of $A$ is equal to two if and only if $s+a+1 \equiv 0$ (modulo $n$). In this case $A$ is Gorenstein, and $A$ has finite global dimension if and only if $n=2$ and $A$ has Kupisch series $[2,3]$.
\end{enumerate}
\end{proposition}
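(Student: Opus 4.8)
The plan is to handle the four parts in order, reducing (3) and (4) to two computations: the start of the minimal projective resolution of the injective module $L:=e_sA/e_sJ^a$, and the geometry of the resolution quiver. For (1) I would read everything off the criterion ``$e_iA$ injective $\iff c_{i-1}\le c_i$''. In the cyclic Kupisch sequence $[a,\dots,a,a+1,\dots,a+1]$ the only descent $c_{i-1}>c_i$ is the wrap-around at $i=0$, so $e_0A$ is the unique projective non-injective module. Dually, $L=e_sA/e_sJ^a$ has socle $S_{s+a-1}$ and cannot be enlarged upward, since $e_{s-1}A/e_{s-1}J^{a+1}$ would need length $a+1>c_{s-1}=a$; hence $L$ is the injective envelope of its socle, and it is non-projective because its top $S_s$ is not the top of any projective of length $a$. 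As $\Def(A)=1$, $L$ is the unique injective non-projective module.

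For (2) I would combine Theorem \ref{findimtheorem} (which gives $\domdim(A)=\findim(A)$, as $A$ is QF-3) with the observation that, since both $A$ and $A^{\op}$ are D1, each of $\sdomdim,\scodomdim$ is a single summand. This yields $\codomdim(L)=\scodomdim(A)=\sdomdim(A^{\op})=\domdim(A^{\op})=\domdim(A)$, using $\scodomdim(A)=\sdomdim(A^{\op})$ and $\domdim(A)=\domdim(A^{\op})$.

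For the criteria in (3) and (4) I would compute the start of the minimal projective resolution of $L$: here $P_0=e_sA$ is injective, $\Omega^1(L)=e_sJ^a=S_{s+a}$ so $P_1=e_{s+a}A$, and $\Omega^2(L)=e_{s+a}J$ so $P_2=e_{s+a+1}A$. Since $\codomdim(L)$ counts the initial injective terms and $e_0A$ is the only non-injective projective, $\codomdim(L)=1\iff P_1$ non-injective $\iff s+a\equiv0\pmod n$, and $\codomdim(L)=2\iff P_2$ non-injective while $P_1$ injective $\iff s+a+1\equiv0\pmod n$ (note $s+a+1\equiv0$ already gives $s+a\not\equiv0$). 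With (2) these are exactly the stated dominant dimension conditions.

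The homological statements about the Gorenstein and global dimension are, I expect, the main obstacle, and I would settle them through the resolution quiver and Theorem \ref{Shentheorem}(2). Two structural facts drive everything: the unique red point (the only simple of projective dimension $1$) is $n-1$, since $n-1$ is the only index with $c_{i+1}=c_i-1$; and the resolution-quiver map $f(i)=i+c_i\pmod n$ has a unique source, namely $a+s\pmod n$, that being the only residue missing from its image. In case (4), $s+a+1\equiv0$ puts the source at $n-1$, so the red point lies on no cycle, every cycle is black, and $A$ is Gorenstein. In case (3), $s+a\equiv0$ puts the source at $0$; here I would invoke the elementary fact that in a functional graph with a unique source every vertex of in-degree $\ge2$ lies on a cycle (otherwise its reverse tree would have two source-leaves). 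The vertex $a\equiv n-s$ has the two preimages $0$ and $n-1$, hence lies on a cycle; since $f(0)=n-s$ already lies on a cycle, $0$ is the only non-cycle vertex, so $n-1$ lies on a (necessarily non-black) cycle and $\Gdim(A)=\infty$. Finally, for the global dimension in (4): finite global dimension forces $\gldim(A)=\Gdim(A)=\findim(A)=\domdim(A)=2$, and then tracing $\Omega^2$ of the simples (whose second syzygies are forced to be non-projective simple modules unless the Kupisch series is very short) forces first $s=1$, then $n=2$, and finally $a=2$, i.e. the series $[2,3]$; the converse that $[2,3]$ has global dimension $2$ is immediate. I expect this resolution-quiver bookkeeping -- pinning down the source and deciding whether the red point lies on a cycle -- to be the delicate step.
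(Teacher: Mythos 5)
Your proposal is correct, and for parts (1), (2) and the dominant-dimension criteria in (3) and (4) it follows essentially the same path as the paper: read off the unique projective non-injective and injective non-projective modules from the descents of the Kupisch series, reduce $\domdim(A)=\findim(A)$ to $\codomdim(e_sA/e_sJ^a)$ via Theorem \ref{findimtheorem} and $\domdim(A)=\domdim(A^{\op})$, and then test whether $e_{s+a}A$, respectively $e_{s+a+1}A$, equals $e_0A$ in the minimal projective resolution $\dots \to e_{s+a+1}A \to e_{s+a}A \to e_sA \to e_sA/e_sJ^a \to 0$. Where you genuinely diverge is in the Gorenstein claims. The paper argues directly on syzygies: in case (3) it observes $\Omega^2(e_sA/e_sJ^a)=e_0J$ is non-projective, so a finite Gorenstein dimension would have to equal $\findim(A)=1$, which is impossible; in case (4) the resolution terminates as $0\to e_0A\to e_{n-1}A\to e_sA\to e_sA/e_sJ^a\to 0$, giving Gorenstein dimension two outright. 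You instead invoke Theorem \ref{Shentheorem}(2) and analyse the resolution quiver: the unique red point is $n-1$, the unique source is $a+s$, and the combinatorial fact that in a functional graph with one source every vertex of in-degree at least two lies on a cycle lets you decide whether $n-1$ is cyclically placed. Both routes are sound; yours is a little longer here but has the virtue of being exactly the technique the paper deploys later in Theorem \ref{D1Gor}, so it unifies the two arguments, whereas the paper's computation is shorter and self-contained. For the final claim of (4) your sketch (no two consecutive equal Kupisch entries, forcing $n=2$, then $a=2$ via Lemma \ref{gldiminfinitecrit}) matches the paper's argument; just make sure to actually cite Lemma \ref{gldiminfinitecrit}(1) or compute the weight of the resolution quiver to rule out $a\geq 3$ when $n=2$.
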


\begin{proof}
\begin{enumerate}
\item Since $c_0 < c_{n-1}$, the indecomposable projective module $e_0 A$ is not injective. Since $c_{s-1} < c_s$, the indecomposable module $e_s A/e_s J^a$ is injective with projective cover $e_s A \to e_s A/e_s J^a \to 0$.
\item This follows from (1) and Theorem \ref{findimtheorem} together with the fact that the dominant dimension of an algebra equals its codominant dimension.
\item The dominant dimension of $A$ equals the codominant dimension of the unique indecomposable injective non-projective module $T \coloneqq e_s A/e_s J^a$ by (2). The minimal projective resolution of $e_s A/e_s J^a$ starts as follows:
$$\dots \to e_{s+a}A \to e_s A \to e_s A/e_s J^a \to 0.$$
Since $A$ is a Nakayama algebra, it has dominant dimension at least one. Thus the codominant dimension equals one if and only if $e_{a+s}A$ is non-injective if and only if $e_{a+s}A=e_0A$ if and only if $0 \equiv a+s$ (modulo $n$). In this case one has $\Omega^{1}(T)=e_s J^a$ and $\Omega^{2}(T)=e_{0} J$, which is not projective. This means that the algebra is not Gorenstein, since when the Gorenstein dimension is finite it coincides with the finitistic dimension.
\item The minimal projective resolution of $e_s A/e_s J^a$ continues as follows.
$$\dots \to e_{s+a+1}A \to e_{s+a}A \to e_s A \to e_s A/e_s J^a \to 0.$$
Thus $A$ has dominant dimension equal to two if and only if $s+a+1 \equiv 0$ (modulo $n$), since $s+a+1 \equiv 0$ (modulo $n$) implies $s+a \neq 0$ (modulo $n$).

Suppose the dominant dimension of $A$ is equal to two. Then the projective resolution above becomes
$$0 \to e_0A \to e_{n-1}A \to e_s A \to e_s A/e_s J^a \to 0.$$ Therefore the Gorenstein dimension of $A$, which equals the projective dimension of $e_s A/e_s J^a$, is equal to two. Suppose there is $0 \leq b <n$ such that $e_b A$ and $e_{b+1}A$ have the same vector space dimension. Then $\Omega^2(e_b A/e_b J)=\soc(e_{b+1}A)$ which is simple non-projective, and as a consequence the global dimension of $A$ is infinite since it is larger than the dominant dimension. So if $A$ has finite global dimension we must have $n=2$, and in this case we must have $a=2$ by Lemma \ref{gldiminfinitecrit}. The algebra with Kupisch series $[2,3]$ has global dimension equal to two.
\end{enumerate}
\end{proof}

\begin{lemma}\label{OmegaTwo}
Let $A=N_{n,a,s}$ be a D1-CNakayama algebra. Then $U=S_{n-1}$ is the unique simple $A$-module $U$ with $\Omega^2(U)=0$, and the unique simple $A$-module $V$ such that $\Omega^2(V)$ has length two is given by $V=S_{s-1}$. For all other simple $A$-modules $S$ we have that $\Omega^2(S)$ is a simple module.
\end{lemma}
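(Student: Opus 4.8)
The plan is to compute the first two syzygies of each simple module $S_j$ directly from the Kupisch series of $A = N_{n,a,s}$, where $c_i = a$ for $0 \leq i \leq s-1$ and $c_i = a+1$ for $s \leq i \leq n-1$, and then simply read off the lengths of the second syzygies. First I would record the standard fact that for any simple module $S_j = e_j A / e_j J$ the first syzygy is its radical, $\Omega^1(S_j) = e_j J$, a uniserial module of length $c_j - 1$ with top $S_{j+1}$; concretely $e_j J \cong e_{j+1} A / e_{j+1} J^{c_j - 1}$. Since the Kupisch condition guarantees $c_{j+1} \geq c_j - 1$, this is a genuine quotient of $e_{j+1} A$, so its projective cover is $e_{j+1} A$ and therefore
$$\Omega^2(S_j) = e_{j+1} J^{c_j - 1}.$$

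This module is uniserial with composition factors $S_{j + c_j}, \dots, S_{j + c_{j+1}}$ (indices modulo $n$), and hence has length $c_{j+1} - c_j + 1$, where length $0$ is interpreted as the zero module and occurs exactly when $\Omega^1(S_j)$ is already projective. The remaining step is a case analysis governed entirely by where the consecutive entries $c_j, c_{j+1}$ sit relative to the single jump up (from $a$ to $a+1$) at index $s-1$ and the cyclic jump down (from $a+1$ to $a$) at index $n-1$. For every $j \notin \{s-1, n-1\}$ one has $c_j = c_{j+1}$, so the length formula gives $1$ and $\Omega^2(S_j)$ is simple. For $j = s-1$ we have $c_{s-1} = a$ and $c_s = a+1$, giving length $2$; for $j = n-1$ we have $c_{n-1} = a+1$ and $c_0 = a$, giving length $0$, and indeed here $\Omega^1(S_{n-1}) = e_0 A / e_0 J^{a} = e_0 A$ is already projective since $e_0 A$ has length $a$, so $\Omega^2(S_{n-1}) = 0$.

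There is no serious obstacle: the argument is a direct syzygy computation. The only points that require care are invoking the inequality $c_{j+1} \geq c_j - 1$ so that the projective cover of $\Omega^1(S_j)$ is correctly identified as $e_{j+1} A$, and checking the boundary behaviour at the two jump indices, in particular that the decrease at $j = n-1$ makes $\Omega^1(S_{n-1})$ already projective. To finish, I would note that the constraint $1 \leq s \leq n-1$ defining a D1-CNakayama algebra forces $S_{s-1} \neq S_{n-1}$, so the two exceptional vertices are distinct; this yields the claimed uniqueness of $U = S_{n-1}$ with $\Omega^2(U) = 0$ and of $V = S_{s-1}$ with $\Omega^2(V)$ of length two, all other simples having simple second syzygy.
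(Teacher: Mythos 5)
Your argument is correct and is essentially the paper's own proof: both compute $\Omega^2(S_j)=e_{j+1}J^{c_j-1}$ of length $c_{j+1}-c_j+1$ and read off the three cases from the single up-jump at $j=s-1$ and the cyclic down-jump at $j=n-1$ in the Kupisch series. The only cosmetic difference is that the paper handles the $\Omega^2(U)=0$ case by characterising projective dimension one via $c_i-1=c_{i+1}$ rather than by exhibiting $\Omega^1(S_{n-1})=e_0A$ directly.
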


\begin{proof}
Assume $A=N_{n,a,s}$.
Note that no simple module of a CNakayama algebra is projective and thus $\Omega^2(U)=0$ for a simple module $U$ is equivalent to $U$ having projective dimension equal to one.
A simple module $S_i$ has projective dimension equal to one if and only if $e_i J \cong e_{i+1}A$, which is equivalent to $c_i -1= c_{i+1}$ by comparing dimensions.
But by the form of the Kupisch series of a $A$, this is true if and only if $i=n-1$.
Now assume $S_i$ does have projective dimension at least two. Then $\Omega^2(S_i)=e_{i+1}J^{c_i-1}$ and this module has vector space dimension equal to $c_{i+1}-c_i+1$. Thus the vector space dimension of $\Omega^2(S_i)$ is equal to two if and only if $c_{i+1}-c_i=1$, which is equivalent to $i=s-1$.
Now for $i \neq n-1$ and $i \neq s-1$, we have that $\dim_K(\Omega^2(S_i))=c_{i+1}-c_i+1=1$ and thus $\Omega^2(S_i)$ is simple.
\end{proof}

Lemma \ref{gldiminfinitecrit} allows us to focus the case $a \leq n-1$ for the classification of D1-CNakayama algebras $N_{n,a,s}$ of finite global dimension in the following.
By Lemma \ref{OmegaTwo}, for a D1-CNakayama algebra $A$ with $n$ simple modules the simple module $S_{n-1}$ is the unique simple module with projective dimension equal to one. We define the idempotent $$e \coloneqq e_0 + e_1 + \dots + e_{n-2},$$ and fix this notation for  D1-CNakayama algebras in this section. Note that for any idempotent $f$ in a Nakayama algebra $A$, also $fAf$ is a Nakayama algebra, see for example \cite[Proposition 2.19]{Mar}.
We will look at the Nakayama algebra $eAe$ for a given D1-CNakayama algebra $A$ in the following.

\begin{lemma} \label{formulaforeAe}
Let $A=N_{n,a,s}$ with $a \leq n-1$ and let $[c_0,c_1,\dots,c_{n-1}]$ be the Kupisch series of $A$. Then the algebra $eAe$ has Kupisch series $[r_0,r_1,\dots,r_{n-2}]$ with $r_i=c_i- \phi(i)$, where for $0 \leq i \leq n-2$,
\begin{equation*}
    \phi(i) =
    \begin{cases*}
      1 & if $c_i \geq n-i$,\\
      0 & otherwise.
    \end{cases*}
  \end{equation*}
\end{lemma}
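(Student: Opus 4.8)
The plan is to compute directly the composition length of each indecomposable projective right $eAe$-module and match it against the stated formula. Recall that for $A=N_{n,a,s}$ the Kupisch series satisfies $c_0=\dots=c_{s-1}=a$ and $c_s=\dots=c_{n-1}=a+1$, so the hypothesis $a\le n-1$ guarantees $c_i\le a+1\le n$ for every $i$; this bound will be the crucial input. Since $e=e_0+\dots+e_{n-2}$, the algebra $eAe$ is again a Nakayama algebra by \cite[Proposition 2.19]{Mar}, its simple modules are indexed by $0,\dots,n-2$, and its indecomposable projective right module at vertex $i$ is $e_iAe$. As these algebras are basic, $r_i$ is just the composition length of $e_iAe$ as an $eAe$-module.

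First I would identify this length. The functor $(-)e\colon \mod A\to \mod eAe$ is exact, it sends $S_j$ to the simple $eAe$-module at vertex $j$ whenever $j\neq n-1$ (because $e_je=e_j$), and it sends $S_{n-1}$ to $0$ (because $e_{n-1}e=0$). Applying $(-)e$ to a composition series of the uniserial module $e_iA$ therefore produces a filtration of $e_iAe$ whose nonzero factors are exactly the composition factors of $e_iA$ different from $S_{n-1}$. Since $e_iA$ is uniserial with composition factors $S_i,S_{i+1},\dots,S_{i+c_i-1}$ (indices read modulo $n$), this gives $r_i=c_i-\epsilon_i$, where $\epsilon_i$ is the number of the $c_i$ consecutive residues $i,i+1,\dots,i+c_i-1$ that are congruent to $n-1$ modulo $n$.

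Next I would evaluate $\epsilon_i$. Because $c_i\le n$, the list $i,i+1,\dots,i+c_i-1$ consists of at most $n$ consecutive integers and hence hits the residue class of $n-1$ at most once, so $\epsilon_i\in\{0,1\}$. For $0\le i\le n-2$ we have $i\le n-2<n-1$, so the integer $n-1$ lies in $\{i,i+1,\dots,i+c_i-1\}$ precisely when $i+c_i-1\ge n-1$, that is, when $c_i\ge n-i$; and in the wrap-around case $i+c_i-1\ge n$ one has $c_i\ge n-i+1>n-i$ while the value $n-1$ is still passed, so the equivalence persists. Hence $\epsilon_i=1$ exactly when $c_i\ge n-i$, which is the definition of $\phi(i)$. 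Combining $r_i=c_i-\epsilon_i$ with $\epsilon_i=\phi(i)$ yields $r_i=c_i-\phi(i)$, as claimed.

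The composition-factor bookkeeping in the second paragraph is routine. The only step requiring genuine care is the modular arithmetic in the third paragraph: the hypothesis $a\le n-1$, equivalently $c_i\le n$, is exactly what prevents the vertex $n-1$ from being counted more than once and what makes the condition ``$n-1$ occurs among the factors'' collapse to the clean inequality $c_i\ge n-i$ rather than some more delicate wrap-around condition. I expect this to be the main obstacle, and I would handle it by separating the non-wrapping case $i+c_i-1\le n-1$ from the wrapping case $i+c_i-1\ge n$ as indicated above.
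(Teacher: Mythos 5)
Your argument is correct and is essentially the paper's own proof: both compute $r_i=\dim_K(e_iAe)=\dim_K(e_iA)-\dim_K(e_iAe_{n-1})$, i.e.\ subtract the multiplicity of $S_{n-1}$ among the composition factors of the uniserial module $e_iA$, and identify that multiplicity as $1$ precisely when $c_i\ge n-i$. The only difference is that you spell out the modular-arithmetic step (using $c_i\le n$ to rule out double-counting) which the paper dismisses with ``clearly''.
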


\begin{proof}
We know that $eAe$ is again a Nakayama algebra and thus it is uniquely determined by its Kupisch series.
The indecomposable projective $eAe$-modules are given by $e_i A e$ for $0 \leq i \leq n-2$, and thus it is enough to calculate their vector space dimensions.
We have $r_i \coloneqq \dim_K(e_i A e)= \dim_K(e_i A) - \dim_K(e_i A e_{n-1})=c_i-\dim_K(e_i A e_{n-1})$. Now clearly $\phi(i) \coloneqq \dim_K(e_i A e_{n-1})$ is equal to one if and only if $c_i \geq n-i$ and equal to zero otherwise.
\end{proof}

We will also need the following proposition.

\begin{proposition} \label{propogldimreduce}
Let $A$ be a CNakayama algebra and suppose $f$ is a primitive idempotent such that $fA/fJ$ has projective dimension equal to one. Let $e=1-f$.
Then $A$ has finite global dimension if and only if $eAe$ has finite global dimension. In case $A$ has finite global dimension, we have $$\gldim(A) \leq \gldim(eAe) +2.$$
\end{proposition}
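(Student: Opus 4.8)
The plan is to analyze the exact functor $(-)e\colon \mod A \to \mod eAe$ together with its left adjoint, and to show that re-inserting the vertex of $f$ changes projective dimensions in a controlled way. Write $S := fA/fJ$ for the simple top of $fA$; the hypothesis $\pd_A(S)=1$ says precisely that $fJ=\rad(fA)$ is projective, that is $fJ\cong e_{f'}A$ where $f'$ is the successor of $f$ in the oriented cycle (so $f'$ is a kept vertex).

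First I would establish the easy direction together with the key projectivity lemma. The functor $F:=(-)e$ is exact, and it sends $e_iA$ with $i$ a kept vertex to the indecomposable projective $e_iAe=e_i(eAe)$ over $eAe$. The only projective that could cause trouble is $fA$; but applying $F$ to $0\to fJ\to fA\to S\to 0$ and using $Se=0$ (since $fe=0$) together with $fJ\cong e_{f'}A$ gives $fAe\cong fJe\cong e_{f'}Ae$, which is projective over $eAe$. Hence $F$ carries every projective $A$-module to a projective $eAe$-module, so it takes projective resolutions to projective resolutions. Consequently $\pd_{eAe}(Me)\le \pd_A(M)$ for every module $M$, and in particular $\gldim(eAe)\le \gldim(A)$; this already yields the implication that finite global dimension of $A$ forces finite global dimension of $eAe$.

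Next, for the converse and the bound I would lift projective resolutions from $eAe$ to $A$ using the left adjoint $G:=-\otimes_{eAe}eA$, which sends $e_i(eAe)$ to $e_iA$ for kept $i$. The decisive step is to compute $G$ on simple $eAe$-modules. Writing $G(S_i^{eAe})\cong e_iA/e_iJeA$, one checks that for a kept vertex $i$ the first radical layer of $e_iA$ is again kept, so $G(S_i^{eAe})\cong S_i$, except when $i$ is the predecessor $f^-$ of $f$ (the vertex carrying an arrow into $f$), in which case $G(S_{f^-}^{eAe})\cong e_{f^-}A/e_{f^-}J^2$, the length-two uniserial module with top $S_{f^-}$ and socle $S$. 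Since $G$ sends $eAe$-projectives to $A$-projectives, a projective $eAe$-resolution of $S_i^{eAe}$ of length $\gldim(eAe)$ lifts to a complex of projective $A$-modules, bounding $\pd_A(G(S_i^{eAe}))$ by $\gldim(eAe)$ once the non-exactness of $G$ is controlled. For all kept $i\ne f^-$ this gives $\pd_A(S_i)\le \gldim(eAe)$. For $i=f^-$ one uses the short exact sequence $0\to S\to e_{f^-}A/e_{f^-}J^2\to S_{f^-}\to 0$ and combines $\pd_A(e_{f^-}A/e_{f^-}J^2)\le \gldim(eAe)$ with $\pd_A(S)=1$ to obtain $\pd_A(S_{f^-})\le \max\{\gldim(eAe),\,\pd_A(S)+1\}=\max\{\gldim(eAe),2\}$. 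Together with $\pd_A(S)=1$ at the deleted vertex, taking the maximum over all simple modules yields $\gldim(A)\le \gldim(eAe)+2$, and in particular finiteness of $\gldim(eAe)$ forces finiteness of $\gldim(A)$.

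The main obstacle is making the lifting rigorous, i.e.\ controlling the failure of $G$ to be exact: the derived functors $\operatorname{Tor}^{eAe}_{\ge 1}(-,eA)$ are concentrated on the non-free summand ${}_{eAe}(eAf)$ of ${}_{eAe}(eA)$, and one must show that this summand is homologically small enough that lifting a length-$d$ resolution produces an $A$-module of projective dimension at most $d$ (respectively $d+2$ after accounting for the predecessor vertex). Equivalently, the only genuinely new homological phenomenon created by re-inserting $f$ is the length-two extension $0\to S\to e_{f^-}A/e_{f^-}J^2\to S_{f^-}\to 0$, whose cost is exactly $\pd_A(S)+1=2$; this is where the hypothesis $\pd_A(S)=1$ is essential and where the summand $+2$ originates. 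I expect this bookkeeping, organised by the adjoint pair $(G,F)$ and the explicit values of $G$ on simples, to be the crux rather than any single hard estimate.
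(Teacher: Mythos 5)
Your first half is fine (and already goes beyond the paper, which proves this proposition simply by citing \cite[Lemma 4]{BFVZ}): since $\pd_A(fA/fJ)=1$ forces $fJ$ to be projective, $fAe\cong fJe$ is a projective $eAe$-module, so the exact functor $(-)e$ preserves projectives and $\gldim(eAe)\le\gldim(A)$. The second half, however, rests on a false intermediate claim. You assert that for every kept vertex $i$ other than the predecessor of $f$ one gets $\pd_A(S_i)\le\gldim(eAe)$, so that the summand $+2$ ``originates'' only at the predecessor vertex. Take $A=N_{4,2,3}$, i.e.\ Kupisch series $[2,2,2,3]$: here $f=e_3$, the algebra $eAe$ has Kupisch series $[2,2,1]$ and $\gldim(eAe)=2$, yet $\pd_A(S_0)=4$ (the successive syzygies of $S_0$ are $S_1$, $S_2$, $S_3$, $e_0A$), and $0$ is not the predecessor of $3$. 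More structurally, your chain of bounds would give $\gldim(A)\le\max\{\gldim(eAe),2\}$, which contradicts Lemma \ref{gldimequalitylemma} of this paper. The source of the error is the hope, flagged in your final paragraph, that the non-exactness of $G=-\otimes_{eAe}eA$ is negligible, i.e.\ that lifting a length-$d$ projective $eAe$-resolution produces an $A$-module of projective dimension at most $d$. It is not: the summand ${}_{eAe}(eAf)$ need not be projective (that would require the dual hypothesis on the simple \emph{left} module at $f$), and the correct estimate is $\pd_A(M)\le\pd_{eAe}(Me)+2$ for \emph{every} $M$, the $+2$ being incurred at every vertex rather than only at the predecessor of $f$.

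The argument can be completed without computing $G$ on simples and without any $\operatorname{Tor}$ analysis, using two observations: (i) every $A$-module $X$ with $Xe=0$ has all composition factors isomorphic to $S=fA/fJ$ and hence $\pd_A(X)\le 1$; (ii) the counit $\varepsilon\colon Me\otimes_{eAe}eA\to M$ has kernel and cokernel killed by $(-)e$, since $(-)e$ is exact and $\varepsilon e$ is an isomorphism. Now prove $\pd_A(M)\le\pd_{eAe}(Me)+2$ by induction on $d=\pd_{eAe}(Me)$. If $d=0$, then $Me\otimes_{eAe}eA$ is a direct summand of some $(eA)^k$, hence $A$-projective; splicing the exact sequences $0\to K\to Me\otimes_{eAe}eA\to MeA\to 0$ and $0\to MeA\to M\to M/MeA\to 0$, where $K$ and $M/MeA$ have projective dimension at most $1$ by (i) and (ii), gives $\pd_A(M)\le 2$. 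If $d\ge 1$, pass to a first syzygy $\Omega(M)$ over $A$; since $(-)e$ preserves projectives, $\Omega(M)e$ is a first syzygy of $Me$ up to projective summands, so $\pd_{eAe}(\Omega(M)e)\le d-1$ and induction yields $\pd_A(M)\le 1+(d-1)+2=d+2$. This gives both the inequality and the remaining finiteness implication.
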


\begin{proof}
See \cite[Lemma 4]{BFVZ}.
\end{proof}

In the case of D1-Nakayama algebras, the previous proposition can be improved as we show next.

\begin{lemma} \label{gldimequalitylemma}
Let $A$ be a D1-CNakayama algebra of finite global dimension. Then $$\gldim(A)=\gldim(eAe)+2.$$
\end{lemma}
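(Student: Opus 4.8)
The plan is to prove the two inequalities separately. The bound $\gldim(A) \le \gldim(eAe)+2$ is exactly the content of Proposition~\ref{propogldimreduce} applied to the primitive idempotent $f = e_{n-1}$, whose simple top $S_{n-1}$ has projective dimension one by Lemma~\ref{OmegaTwo}; here $e = 1-f = e_0 + \cdots + e_{n-2}$. So the real work is to establish the reverse inequality $\gldim(A) \ge \gldim(eAe)+2$, and for this it suffices to exhibit a single $A$-module of projective dimension at least $\gldim(eAe)+2$.

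First I would record the structural fact that $S_{n-1}$ is the \emph{unique} indecomposable $A$-module of projective dimension one. Writing the Kupisch series as $[\underbrace{a,\dots,a}_{s},\underbrace{a+1,\dots,a+1}_{n-s}]$, an indecomposable $M = e_iA/e_iJ^l$ with $0<l<c_i$ has projective dimension one precisely when its syzygy $e_iJ^l$ is projective, i.e. when $c_i - l = c_{i+l}$; a short case analysis on whether $c_i$ and $c_{i+l}$ equal $a$ or $a+1$ shows this forces $i=n-1$ and $l=1$, that is $M = S_{n-1}$, with $\Omega^1(S_{n-1}) = e_{n-1}J = e_0A$ projective. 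Consequently, for \emph{any} non-projective $A$-module $N$ of finite projective dimension $p$, the penultimate syzygy $\Omega^{p-1}(N)$, being an indecomposable module of projective dimension one, must equal $S_{n-1}$; hence the last two terms of the minimal projective resolution of $N$ are $e_{n-1}A$ in degree $p-1$ and $e_0A$ in degree $p$, the connecting map being the inclusion $e_{n-1}J = e_0A \hookrightarrow e_{n-1}A$.

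Next I would use the exact restriction functor $(-)e\colon \mod A \to \mod eAe$. It sends each indecomposable projective $e_iA$ with $i\le n-2$ to the indecomposable projective $e_iAe$, annihilates $S_{n-1}$, and sends $e_{n-1}A$ to $e_{n-1}Ae = e_{n-1}Je = e_0Ae$; moreover $S_je = T_j$ is the simple $eAe$-module at vertex $j$ for $j\le n-2$. Now choose $j\le n-2$ with $\pd_{eAe}(T_j) = \gldim(eAe) =: r$. Since simple modules over a CNakayama algebra are never projective and $S_{n-1}$ is the only indecomposable of projective dimension one, we have $p := \pd_A(S_j) \ge 2$. Applying $(-)e$ to the minimal $A$-resolution of $S_j$ yields an $eAe$-resolution of $T_j$ in which, by the previous paragraph, the top differential $e_0Ae \to e_{n-1}Ae = e_0Ae$ is an isomorphism; the differential out of degree $p-1$ is therefore forced to vanish, so this complex truncates to a projective $eAe$-resolution of $T_j$ of length $p-2$. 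This gives $r = \pd_{eAe}(T_j) \le p-2 = \pd_A(S_j) - 2$, whence $\gldim(A) \ge \pd_A(S_j) \ge r+2$, which together with Proposition~\ref{propogldimreduce} yields the claimed equality.

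The main obstacle is the second paragraph: pinning down that the minimal resolution of every indecomposable of finite projective dimension ends through the simple $S_{n-1}$, which requires uniqueness of $S_{n-1}$ among indecomposables of projective dimension one rather than merely among simples (Lemma~\ref{OmegaTwo} gives only the latter). A secondary technical point is to verify that the restriction $(-)e$ of the minimal $A$-resolution truncates cleanly, i.e. that the isomorphism in the top degree forces the adjacent differential to be zero so that no homology is created when the top two terms are deleted.
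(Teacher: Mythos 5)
Your proof is correct and follows essentially the same route as the paper's: the upper bound via Proposition~\ref{propogldimreduce}, and the lower bound by applying the exact functor $(-)e=\Hom(eA,-)$ to minimal projective resolutions, observing that they all terminate in $0\to e_{n-1}J\to e_{n-1}A$ and hence shorten by exactly two steps over $eAe$. The only (inessential) difference is how that terminal segment is pinned down: you show $S_{n-1}$ is the unique indecomposable of projective dimension one by a Kupisch-series case analysis, while the paper equivalently argues that the last term of a finite minimal resolution must be the unique projective non-injective $e_0A\cong e_{n-1}J$.
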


\begin{proof}
For Nakayama algebras, any finite minimal projective resolution of an indecomposable non-projective module has to end with an inclusion between indecomposable projectives.
In the D1-CNakayama case, there is only one possible such inclusion between indecomposable projective modules, namely $0 \rightarrow e_{n-1} J \rightarrow e_{n-1}A$, because $e_{n-1} J \cong e_0 A$ is the unique indecomposable projective non-injective $A$-module and any finite minimal projective resolution of an indecomposable non-injective module ends with a non-injective projective module.

If we apply the exact functor $F=\Hom(eA, -)$ to the exact sequence $0 \to e_{n-1} J \to e_{n-1}A \to S_{n-1} \to 0$, then we get an isomorphism $0 \rightarrow F(e_{n-1}J) \rightarrow F(e_{n-1}A) \rightarrow 0$ since $F(S_{n-1})=0$.  Since $F(e_{n-1}J) \cong F(e_0A) \cong e_0 A e$ is projective as an $eAe$-module, also $F(e_{n-1}A)$ is projective as an $eAe$-module.
Note that for $0 \leq i \leq n-2$, applying the functor $F$ to the exact sequence
$$0 \to e_i J \to e_i A \to S_i \to 0$$
gives the exact sequence
$$0 \to e_i Je \to e_i Ae \to S_i e \to 0.$$
We have that $S_i e$ is non-zero and simple, since $S_i$ and thus also $S_i e$ has vector space dimension equal to one. For $i \neq j$, the modules $S_i e$ and $S_j e$ are non-isomorphic since their projective covers $e_iA e$ and $e_j A e$ are non-isomorphic.
Thus every simple $eAe$-module is isomorphic to $F(S_i)$ for some with some $0 \leq i \leq n-2$.
If we apply $F$ to the minimal projective $A$-resolution of a simple $A$-module $S$, then we get a resolution of $F(S)$ as an $eAe$-module. By the arguments above, this resolution is projective and it can be made two steps shorter since the A-resolution ends with  $0 \to e_{n-1}J \to e_{n-1}A$, which becomes an isomorphism after applying $F$. Since this applies to all simple modules $S$, we conclude $\gldim(A) \geq \gldim(eAe)+2$.
By Proposition \ref{propogldimreduce}, we also have $\gldim(A) \leq \gldim(eAe)+2$, which finishes the proof.
\end{proof}

\begin{theorem} \label{uniqueness proof}
For fixed $n \geq 2$ and $r$ with $n \leq r \leq 2n-2$, there exists a unique D1-CNakayama algebra with $n$ simple modules and finite global dimension $r$. These are the only D1-CNakayama algebras of finite global dimension.
\end{theorem}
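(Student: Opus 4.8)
The plan is to argue by induction on $n$, with the reduction $A \mapsto eAe$ and the identity $\gldim(A) = \gldim(eAe) + 2$ of Lemma~\ref{gldimequalitylemma} as the engine. For the base case $n = 2$ the D1-CNakayama algebras are exactly those with Kupisch series $[a, a+1]$ for $a \geq 2$, and a direct inspection of the resolution quiver via Theorem~\ref{Shentheorem}(1) (or Proposition~\ref{basicsabout1nak}(4)) shows that $[2,3]$ is the unique one of finite global dimension, with $\gldim = 2 = n$.

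For the inductive step I first restrict the parameter $a$ of $A = N_{n,a,s}$. Lemma~\ref{gldiminfinitecrit}(1) rules out $a \geq n+1$, and Lemma~\ref{gldiminfinitecrit}(2) shows that the only finite-global-dimension algebra with $a = n$ is $N_{n,n,1} = [n, n+1, \dots, n+1]$, of global dimension $2n - 2$. For $a \leq n-1$, Lemma~\ref{formulaforeAe} gives the Kupisch series of $eAe$ explicitly, and distinguishing according to how $s + a$ compares with $n$ shows that $eAe$ is one of three things: a selfinjective algebra $[a, \dots, a]$ (when $s+a \in \{n-1,n\}$), forcing $\gldim(A) = \infty$ by Proposition~\ref{propogldimreduce} and hence excluded; a D1-CNakayama algebra with $n - 1$ simple modules, namely $N_{n-1, a, s+a}$ if $s + a < n - 1$ and $N_{n-1, a-1, s+a-n}$ if $s + a > n$ and $a \geq 3$; or, in the single exceptional case $A = N_{n,2,n-1}$, the D1-LNakayama algebra $[2, \dots, 2, 1]$ with $n-1$ simple modules, whose global dimension is $n - 2$, so that $\gldim(N_{n,2,n-1}) = n$.

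The heart of the matter is then to check that $A \mapsto eAe$ restricts to a bijection between the finite-global-dimension D1-CNakayama algebras with $n$ simple modules other than $N_{n,2,n-1}$ and the finite-global-dimension D1-CNakayama algebras with $n - 1$ simple modules, lowering global dimension by exactly $2$ (Lemma~\ref{gldimequalitylemma}), with finiteness of global dimension transferred by Proposition~\ref{propogldimreduce}. Injectivity and surjectivity follow by inverting the explicit Kupisch-series formulas: a target $N_{n-1, a', s'}$ has the unique preimage $N_{n, a', s' - a'}$ when $s' \geq a' + 1$ and $N_{n, a'+1, s' + n - a' - 1}$ when $s' \leq a'$, and these two conditions are complementary. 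The boundary algebra $N_{n,n,1}$ (the $a = n$ survivor) slots in as the preimage of the top algebra $N_{n-1, n-1, 1}$, consistently with $2n - 2 = (2n - 4) + 2$; since Lemma~\ref{formulaforeAe} does not apply when $a = n$, I would confirm that the reduction of $N_{n,n,1}$ is $N_{n-1,n-1,1}$ by a direct computation.

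Granting this bijection, the induction closes at once. By the inductive hypothesis the $(n-1)$-case comprises exactly $n - 2$ algebras, one of each global dimension in $\{n-1, \dots, 2n-4\}$; raising each by $2$ and adjoining the single extra algebra $N_{n,2,n-1}$ of global dimension $n$ yields precisely $n - 1$ algebras realising each value of $\{n, \dots, 2n-2\}$ exactly once, which gives existence, uniqueness, and the fact that there are no others. I expect the principal obstacle to be the bookkeeping in the bijection: verifying that the reduction lands in the D1-CNakayama class (and not among selfinjective or linear Nakayama algebras) off exactly the one exceptional algebra, and matching the $a = n$ boundary case $N_{n,n,1}$ with the top of the $(n-1)$-case, where the formula of Lemma~\ref{formulaforeAe} no longer applies.
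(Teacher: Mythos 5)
Your proposal is correct and follows essentially the same route as the paper: induction on $n$ driven by the reduction $A \mapsto eAe$, the shift $\gldim(A)=\gldim(eAe)+2$, the explicit Kupisch-series formulas of Lemma~\ref{formulaforeAe}, and a bijection between the $n$- and $(n-1)$-simple cases. The only differences are bookkeeping: you exclude the degenerate parameters by noting $eAe$ is selfinjective (the paper instead observes these are exactly the algebras of dominant dimension $\le 2$), you prove bijectivity by writing down the inverse rather than by an injectivity-plus-cardinality count, and you fold the boundary pair $N_{n,n,1}\mapsto N_{n-1,n-1,1}$ into the bijection while the paper keeps $N_{n,n,1}$ separate and puts $C_{n-1}$ into the codomain instead.
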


\begin{proof}
The theorem is easy to verify for $n=2$ where the only D1-CNakayama algebra with finite global dimension is the one with Kupisch series [2,3]. For $n \geq 3$, by Proposition \ref{basicsabout1nak} all D1-CNakayama algebras with finite global dimension have global dimension strictly larger than $2$. When $a=n$ there is a unique D1-CNakayama algebras with finite global dimension $2n-2$ by Lemma \ref{gldiminfinitecrit}(2). In case $a>n$, a D1-CNakayama algebra can never have finite global dimension by Lemma \ref{gldiminfinitecrit}(1).

Assume $n \geq 3$ and $a \leq n-1$. For the rest of the proof let $C_n$ denote the Nakayama algebra with Kupisch series $[2,2,2,\dots,2,1]$ and $n$ simple modules. As previously noted we have $\gldim (C_n)=n-1$.
For a fixed $n \geq 3$ let $\mathcal{E} \colon X_1 \rightarrow X_2$ be the map sending a D1-CNakayama algebra $A$ to the Nakayama algebra $eAe$, where
$$X_1 \coloneqq \{ N_{n,a,s} \mid 2 \leq a \leq n-1 \text{ and } \domdim(N_{n,a,s}) \geq 3 \}$$ and
$$X_2 \coloneqq \{ N_{n-1,a,s} \mid 2 \leq a \leq n-2 \} \cup \{C_{n-1} \}.$$
Note that by Proposition \ref{basicsabout1nak} we have $\domdim(N_{n,a,s}) \geq 3$ if and only if $a+s \neq 0$ (modulo $n$) and $a+s+1 \neq 0$ (modulo $n$). Since $2 \leq a \leq n-1$ and $1 \leq s \leq n-1$, we have $s+a \leq 2n-2$ and $s+a+1 \leq 2n-1$. So $s+a=0$ (modulo $n$) if and only if $s+a=n$, and $s+a+1=0$ (modulo $n$) if and only if $s+a=n-1$.
This gives us that $N_{n,a,s} \in X_1$ if and only if $s \neq n-a$ and $s \neq n-1-a$. Computing the cardinality of the sets $X_1$ and $X_2$ we find that both have $(n-3)(n-2)+1$ elements.

Now by explicit calculations using the formula for the Kupisch series in Lemma \ref{formulaforeAe}, we have
\begin{enumerate}
\item $\mathcal{E}(N_{n,2,n-1})=C_{n-1}$.
\item In case $s>n-a$ and $a>2$ we have $\mathcal{E}(N_{n,a,s})=N_{n-1,a-1,s-(n-a)}$.
\item In case $s<n-a-1$ we have $\mathcal{E}(N_{n,a,s})=N_{n-1,a,s+a}.$
\end{enumerate}
In case (3) we have $a<n-s-1 \leq n-2$, so $\mathcal{E}$ is well defined as a map $\mathcal{E} \colon X_1 \rightarrow X_2$. Since $(a-1,s-(n-a))=(a',s'+a)$ implies $s=s'+n-1 \geq n$, a contradiction, the map $\mathcal{E}$ is injective.
This proves that $\mathcal{E}$ is a bijection from $X_1$ to $X_2$ since $X_1$ and $X_2$ have the same finite cardinality. We illustrate the map $\mathcal{E}$ for $n=7$ in Table 1, where we write $[n,a,s]$ short for the algebra $N_{n,a,s}$ and canceled entries mean that the entry in question does not have dominant dimension at least three.

\begin{table}[h]
\centering
\begin{tabular}{| c | c c c c c c|}

\hline
$A$ & $[7,2,6]$ & $\cancel{[7,2,5]}$ & $\cancel{[7,2,4]}$ & $[7,2,3]$ & $[7,2,2]$ & $[7,2,1]$ \\
$\mathcal{E}(A)$ & $C_6$ & $-$ & $-$ & $[6,2,5]$ & $[6,2,4]$ & $[6,2,3]$\\
\hline
$A$ & $[7,3,6]$ & $[7,3,5]$ & $\cancel{[7,3,4]}$ & $\cancel{[7,3,3]}$ & $[7,3,2]$ & $[7,3,1]$ \\
$\mathcal{E}(A)$ & $[6,2,2]$ & $[6,2,1]$ & $-$ & $-$ & $[6,3,5]$ & $[6,3,4]$\\
\hline
$A$ & $[7,4,6]$ & $[7,4,5]$ & $[7,4,4]$ & $\cancel{[7,4,3]}$ & $\cancel{[7,4,2]}$ & $[7,4,1]$ \\
$\mathcal{E}(A)$ & $[6,3,3]$ & $[6,3,2]$ & $[6,3,1]$ & $-$ & $-$ & $[6,4,5]$\\
\hline
$A$ & $[7,5,6]$ & $[7,5,5]$ & $[7,5,4]$ & $[7,5,3]$ & $\cancel{[7,5,2]}$ & $\cancel{[7,5,1]}$ \\
$\mathcal{E}(A)$ & $[6,4,4]$ & $[6,4,3]$ & $[6,4,2]$ & $[6,4,1]$ & $-$ & $-$\\
\hline
$A$ & $[7,6,6]$ & $[7,6,5]$ & $[7,6,4]$ & $[7,6,3]$ & $[7,6,2]$ & $\cancel{[7,6,1]}$ \\
$\mathcal{E}(A)$ & $[6,5,5]$ & $[6,5,4]$ & $[6,5,3]$ & $[6,5,2]$ & $[6,5,1]$ & $-$\\
\hline
\end{tabular}
\caption{\textbf{The map $\mathcal{E}$ for $n=7$}}
\label{table:Ordinary_ct_S3}
\end{table}

Now we use induction on the number of simple modules $n$. We assume the statement of the theorem is true for $n-1$ and show that it is then also true for $n$. Let $\mathcal{Z}_{n-1,m}$ for $1 \leq m \leq n-2$ be the unique D1-CNakayama algebra with $n-1$ simple modules and global dimension $n-1+m-1$.
By Lemma \ref{gldiminfinitecrit} and Proposition \ref{basicsabout1nak}, a D1-CNakayama algebra with $n$ simple modules and finite global dimension either belongs to $X_1$ or has Kupisch series $[n,n+1,\dots,n+1]$. Similarly, a D1-CNakayama algebra with $n-1$ simple modules and finite global dimension either belongs to $X_2$ or has Kupisch series $[n-1,n,n,\dots,n]$.
The Nakayama algebra with Kupisch series $[n-1,n,n,\dots,n]$ has global dimension equal to $2n-4$, so it must be isomorphic to $\mathcal{Z}_{n-1,n-2}$ by the uniqueness property in the induction hypothesis, and also by uniqueness no algebras belonging to $X_2$ can have global dimension equal to $2n-4$. Also by the induction hypothesis, it then follows that the subset $Y_2 \subseteq X_2$ consisting of the algebras with finite global dimension is $$Y_2=\{ \mathcal{Z}_{n-1,m} \mid 1 \leq m \leq n-3 \} \cup \{C_{n-1} \}.$$ For $n-2 \leq r \leq 2n-5$ there is a unique algebra in $Y_2$ of global dimension $r$. Since $\mathcal{E}$ is a bijection, it follows from Proposition \ref{propogldimreduce} and Lemma \ref{gldimequalitylemma} that the subset $Y_1 \subseteq X_1$ consisting of the algebras with finite global dimension is equal to the inverse image of $Y_2$ under $\mathcal{E}$, and for $n \leq r \leq 2n-3$ there is a unique algebra in $Y_1$ of global dimension $r$. Since the only other D1-CNakayama algebra with $n$ simple modules and finite global dimension is the one with Kupisch series $[n,n+1,\dots,n+1]$, and this algebra has global dimension equal to $2n-2$, the statement is true for $n$ and the theorem follows by induction.
\end{proof}

The main result of this section can now easily derived from the previous results.

\begin{theorem}
The following are equivalent for a Nakayama algebra $A$ with $n$ simple modules.
\begin{enumerate}
\item $A$ is a higher Auslander algebra of global dimension $r \geq n$.
\item $A$ is a D1-CNakayama algebra having finite global dimension.
\item $A$ has dominant dimension at least $n$ and finite global dimension.
\end{enumerate}
Furthermore, for any $n \leq r \leq 2n-2$, there is a unique Nakayama algebra $A$ with global dimension $r$ that is a higher Auslander algebra.
\end{theorem}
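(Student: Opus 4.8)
The plan is to establish the three conditions as equivalent via the cycle $(1) \Rightarrow (3) \Rightarrow (2) \Rightarrow (1)$, drawing entirely on the machinery already assembled, and then to read off the uniqueness clause from Theorem \ref{uniqueness proof}. The implication $(1) \Rightarrow (3)$ is immediate from the definition of a higher Auslander algebra: such an algebra satisfies $\domdim(A) = \gldim(A)$, so if this common value equals $r \geq n$ then $A$ has finite global dimension and dominant dimension at least $n$.

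For $(3) \Rightarrow (2)$ I would first note that finite global dimension forces $A$ to be non-selfinjective, since a non-semisimple selfinjective algebra has a non-projective simple module and hence infinite global dimension. Corollary \ref{inequalitycor}(3) then applies and yields $\Def(A) = 1$, so $A$ is a D1-Nakayama algebra. It remains to rule out the LNakayama case: the unique D1-LNakayama algebra has Kupisch series $[2,2,\dots,2,1]$ and dominant dimension $n-1$, which is strictly smaller than $n$ and so violates the hypothesis $\domdim(A) \geq n$. Therefore $A$ is a D1-CNakayama algebra of finite global dimension.

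For $(2) \Rightarrow (1)$ I would invoke Theorem \ref{findimtheorem}, which applies to every D1-Nakayama algebra, these being representation-finite algebras with exactly one projective non-injective indecomposable module and dominant dimension at least one. It gives $\domdim(A) = \findim(A)$, and since the global dimension is finite we also have $\findim(A) = \gldim(A)$; hence $\domdim(A) = \gldim(A)$, and $A$ is a higher Auslander algebra as soon as this common value is at least $2$. By Theorem \ref{uniqueness proof} a D1-CNakayama algebra of finite global dimension has $\gldim(A) = r$ with $n \leq r \leq 2n-2$, so in particular $r \geq n \geq 2$ and $(1)$ holds with the asserted lower bound on $r$.

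Finally, the furthermore clause follows by combining the established equivalence $(1) \Leftrightarrow (2)$ with Theorem \ref{uniqueness proof}: a Nakayama algebra that is a higher Auslander algebra of global dimension $r$ with $n \leq r \leq 2n-2$ is precisely a D1-CNakayama algebra of finite global dimension $r$, and Theorem \ref{uniqueness proof} produces exactly one such algebra for each $r$ in this range. The genuine content of the argument was already carried out in the earlier sections, so I do not expect a serious obstacle here; the only points demanding care are the elimination of the selfinjective and LNakayama possibilities in $(3) \Rightarrow (2)$ and the verification in $(2) \Rightarrow (1)$ that the global dimension automatically clears the threshold $2$, so that $A$ indeed qualifies as a higher Auslander algebra.
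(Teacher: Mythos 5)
Your proposal is correct and uses exactly the same ingredients as the paper's proof (Corollary \ref{inequalitycor}(3), Theorem \ref{findimtheorem}, and Theorem \ref{uniqueness proof}), merely running the cycle of implications in the opposite direction ($(1)\Rightarrow(3)\Rightarrow(2)\Rightarrow(1)$ instead of $(1)\Rightarrow(2)\Rightarrow(3)\Rightarrow(1)$) and excluding the LNakayama case via its dominant dimension $n-1$ rather than via its global dimension $n-1$. Both variants are sound, so there is nothing to fix.
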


\begin{proof}
Assume (1), so that $A$ is a higher Auslander algebra of global dimension $r \geq n$. By definition $A$ has finite global dimension and dominant dimension $r \geq n$ and thus by Corollary \ref{inequalitycor}(3) it is a D1-Nakayama algebra. Since the global dimension is at least $n$ it must be a CNakayama algebra, which gives (2).

Now assume (2). Then $A$ has global dimension $n+m-1$ for some $m \geq 1$ by Theorem \ref{uniqueness proof} and thus it also has dominant dimension at least $n$ by Theorem \ref{findimtheorem}, which shows (3).

That (3) implies (1) follows again by Corollary \ref{inequalitycor}(3) since having dominant dimension at least $n$ implies that $A$ is a D1-Nakayama algebra, and then $A$ additionally having finite global dimension implies that it is a higher Auslander algebra by Theorem \ref{findimtheorem}.

We saw in Theorem \ref{uniqueness proof} that there is a unique D1-CNakayama algebra with $n$ simple modules and finite global dimension $r$ for $n \leq r \leq 2n-2$.
\end{proof}

We give an examples that shows that in general there is no unique higher Auslander algebra of a given global dimension among Nakayama algebras, and we give another example that shows that $eAe$ is not in general a higher Auslander algebra whenever $A$ is a higher Auslander algebra.

\begin{example}
In general for Nakayama algebras with $n$ simple modules there might be two non-isomorphic algebras that are higher Auslander algebras with global dimension $r$ with $r <n$.
For example for $n=9$ the two algebras with Kupisch series $[ 2, 2, 3, 2, 2, 3, 2, 2, 3 ]$ and $[ 5, 5, 5, 5, 5, 9, 8, 7, 6 ]$ are higher Auslander algebras with global dimension $3$.
\end{example}

\begin{example}
Let $A$ be the CNakayama algebra with Kupisch series $[ 2, 2, 3, 2, 2, 3 ]$.
Then $A$ is a higher Auslander algebra with global dimension $3$.
But the algebra $B=eAe$ is the LNakayama algebra with Kupisch series $[3,2,2,2,1]$.
The algebra $B$ has global dimension $3$ and dominant dimension $1$, and thus it is not a higher Auslander algebra.
\end{example}

We next look at the Kupisch series of the D1-CNakayama algebras that are higher Auslander algebras.
We define $\mathcal{Z}_{n,m}$ for $1 \leq m \leq n-1$ to be the unique D1-CNakayama algebra with $n$ simple modules and global dimension equal to $n+m-1$. Since $\mathcal{E}(N_{n,2,n-1})=C_{n-1}$, we get $\gldim (N_{n,2,n-1})=n$, and therefore $\mathcal{Z}_{n,1}=N_{n,2,n-1}$.
In the proof of Theorem \ref{uniqueness proof} we saw that for $2 \leq m \leq n-2$ the algebras $\mathcal{Z}_{n,m}$ are recursively determined by the condition that $\mathcal{E}(\mathcal{Z}_{n,m})=\mathcal{Z}_{n-1,m-1}$. Since $\mathcal{E}$ is a bijection, we can consider its inverse, which for algebras $N_{n-1,a,s}$ with $2 \leq a \leq n-2$ is given by
\begin{enumerate}
\item In case $s \leq a$ we have $\mathcal{E}^{-1}(N_{n-1,a,s})=N_{n,a+1,s-a+n-1}$.
\item In case $s>a$ we have $\mathcal{E}^{-1}(N_{n-1,a,s})=N_{n,a,s-a}.$
\end{enumerate}
This gives a suitable recursive description of the Kupisch series of $\mathcal{Z}_{n,m}$.
Explicit formulas are given in the following proposition.

\begin{proposition}
Given $n \geq 2$ and $1 \leq m \leq n-1$, then the algebra $\mathcal{Z}_{n,m}$ is the Nakayama algebra $N_{n,a,s}$, where
\begin{enumerate}
  \item if $n-m$ is odd, then $$a=[\frac{2n}{n-m+1}]$$ and $$s= \frac a 2 ((a-1)n-(a+1)m+a-1)+1.$$
  \item if $n-m$ is even, then $$a=[\frac{2(n-1)}{n-m}]$$ and $$s= \frac a 2 ((a-1)n-(a+1)m+2).$$
\end{enumerate}
\end{proposition}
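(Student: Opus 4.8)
The plan is to prove both formulas simultaneously by induction on $n$, exploiting the recursion $\mathcal{Z}_{n,m}=\mathcal{E}^{-1}(\mathcal{Z}_{n-1,m-1})$ recorded just before the statement. The observation that organizes everything is that the quantity $d:=n-m$ is invariant under this recursion, since $(n-1)-(m-1)=n-m$. Consequently the parity of $d$, which is exactly what distinguishes case (1) from case (2), is constant along each recursion chain, so I may fix $d$ and induct on $n$, starting each chain at its smallest admissible value $n=d+1$ (where $m=1$).

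First I would dispose of the boundary situations. For $m=1$, that is $n=d+1$, the identity $\mathcal{Z}_{n,1}=N_{n,2,n-1}$ gives $a=2$ and $s=n-1=d$; substituting $(n,m)=(d+1,1)$ into either case of the claimed formula returns precisely $a=2$ and $s=d$, so the base case holds for every $d$. For $d=1$, that is $m=n-1$, the algebra $\mathcal{Z}_{n,n-1}$ has global dimension $2n-2$ and is therefore, by Lemma \ref{gldiminfinitecrit}(2), the algebra with Kupisch series $[n,n+1,\dots,n+1]=N_{n,n,1}$; and case (1) specialized to $d=1$ indeed yields $a=n$ and $s=1$. This settles the whole chain $d=1$ and provides the base of every chain with $d\geq 2$.

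For the inductive step I would fix $d\geq 2$ and assume the formula for $\mathcal{Z}_{n-1,m-1}=N_{n-1,a',s'}$ (necessarily in the same parity case). Since $d\geq 2$ forces $a'\leq n-2$, the explicit description of $\mathcal{E}^{-1}$ applies, and the step reduces to an arithmetic verification with two branches. Concretely, I must show that the dichotomy governing $\mathcal{E}^{-1}$, namely whether $s'\leq a'$ or $s'>a'$, coincides exactly with whether the closed form for $a$ jumps, i.e. $a(n,m)=a'+1$ versus $a(n,m)=a'$, and then check that the closed form for $s$ transforms correctly: $s=s'-a'+(n-1)$ in the incrementing branch and $s=s'-a'$ in the stationary branch. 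Rewriting $a=\lfloor n/\tfrac{d+1}{2}\rfloor$ when $d$ is odd and $a=\lfloor (n-1)/\tfrac{d}{2}\rfloor$ when $d$ is even, the floor increments from $n-1$ to $n$ precisely when $n\equiv 0 \pmod{(d+1)/2}$, respectively $n\equiv 1 \pmod{d/2}$, and the job is to confirm that these congruences are equivalent to $s'\leq a'$ for the inductive values $a',s'$.

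The main obstacle will be this last matching. Because the expression for $s$ is quadratic in $a$, propagating it through the two branches of $\mathcal{E}^{-1}$ and reconciling the outcome with the floor-based jump condition for $a$ demands careful bookkeeping, and the two parity cases must be carried in parallel since the recursion preserves the parity of $d$. Once the equivalence ``the floor increments $\Leftrightarrow$ $s'\leq a'$'' is in hand, substituting the inductive closed forms for $a'$ and $s'$ into the two update rules $s=s'-a'+(n-1)$ and $s=s'-a'$ and simplifying back to the claimed quadratic in $a(n,m)$ is a direct, if tedious, computation that closes the induction. The final clause, that the minimal even projective dimension among the simple $\mathcal{Z}_{n,m}$-modules equals $2m$, then follows since the global dimension $n+m-1$ together with the uniqueness in the preceding theorem pins down $m$ as exactly the minimal such half-value.
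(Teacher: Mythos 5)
Your proposal follows essentially the same route as the paper's proof: induction on $n$ via the recursion $\mathcal{Z}_{n,m}=\mathcal{E}^{-1}(\mathcal{Z}_{n-1,m-1})$, with base cases $\mathcal{Z}_{n,1}=N_{n,2,n-1}$ and $\mathcal{Z}_{n,n-1}=N_{n,n,1}$, and an inductive step that matches the dichotomy $s'\le a'$ versus $s'>a'$ governing $\mathcal{E}^{-1}$ with whether the floor expression for $a$ increments. The arithmetic you defer as tedious bookkeeping is precisely the content of the paper's argument (there the jump condition is identified as $a'+1=\tfrac{2n}{n-m+1}$ in the odd case and $a'+1=\tfrac{2(n-1)}{n-m}$ in the even case), and it goes through exactly as you predict.
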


\begin{proof}
As noted before, we have $\mathcal{Z}_{n,n-1}=N_{n,n,1}$, which satisfies the above formulas for $n-m$ odd. For $\mathcal{Z}_{n,1}$ the formulas both in the odd and the even case give $\mathcal{Z}_{n,1}=N_{n,2,n-1}$, a fact that we already have established. This will serve as a basis for an induction proof. For the induction step we use that $\mathcal{E}^{-1}(\mathcal{Z}_{n-1,m-1})=\mathcal{Z}_{n,m}$ and the formulas for $\mathcal{E}^{-1}$ given above. We split into odd and even cases.
\begin{enumerate}
\item Let $2 \leq m \leq n-2$ and assume $n-m$ is odd. Assume the statement is true for the algebra $\mathcal{Z}_{n-1,m-1}$, so $\mathcal{Z}_{n-1,m-1}=N_{n-1,a,s}$ with $a=[\frac{2(n-1)}{n-m+1}]$ and $s=\frac a 2 ((a-1)(n-1)-(a+1)(m-1)+a-1)+1=((a+1) (\frac {n-m+1} 2) -n)a+1$. Since $a+1 > \frac{2(n-1)}{n-m+1}$, we have $(a+1) (\frac {n-m+1} 2) -n >-1$, or equivalently $$(a+1) (\frac {n-m+1} 2) -n \geq 0.$$ Therefore $s \leq a$ if and only if and $s=1$ if and only if $(a+1) (\frac {n-m+1} 2) -n  =0$ if and only if $a+1 = \frac{2n}{n-m+1}$.

    Let $\mathcal{Z}_{n,m}=N_{n,a',s'}$. If $s=1$, then by the recursive formulas we have $$a'=a+1=\frac{2n}{n-m+1}=[\frac{2n}{n-m+1}]$$ and
    \begin{equation*}
    \begin{split}
    s'=s-a+n-1=-a+n &=(-1+(\frac {n-m+1} 2))(a+1)+1\\ &=((a+2) (\frac {n-m+1} 2)-1 -n)(a+1)+1\\ &=((a'+1) (\frac {n-m+1} 2)-1 -n)a'+1\\ &= \frac {a'} 2 ((a'-1)n-(a'+1)m+a'-1)+1.
    \end{split}
    \end{equation*}
    If $s>a$, then by the recursive formulas $$a'=a=[\frac{2(n-1)}{n-m+1}]=[\frac{2n}{n-m+1}],$$ since $2 \mid (n-m+1)$ and $\frac{2n}{n-m+1} \neq a+1$, and
    \begin{equation*}
    \begin{split}
    s'=s-a+n-1 &=\frac a 2 ((a-1)(n-1)-(a+1)(m-1)+a-1)+1-a+n-1\\ &=\frac {a'} 2 ((a'-1)n-(a'+1)m+a'-1)+1.
    \end{split}
    \end{equation*}
    This concludes the induction step and the statement is proven for $n-m$ odd.

\item Let $2 \leq m \leq n-2$ and assume $n-m$ is even. Assume the statement is true for the algebra $\mathcal{Z}_{n-1,m-1}$, so $\mathcal{Z}_{n-1,m-1}=N_{n-1,a,s}$ with $a=[\frac{2(n-2)}{n-m}]$ and $s=\frac a 2 ((a-1)(n-1)-(a+1)(m-1)+2)=((a+1) (\frac {n-m} 2) +2 -n) a$. Since $a+1 > \frac{2(n-2)}{n-m}$, we have $(a+1) (\frac {n-m} 2) +2 -n >0$, or equivalently $$(a+1) (\frac {n-m} 2) +2 -n \geq 1.$$ Therefore $s \geq a$, and $s=a$ if and only if $(a+1) (\frac {n-m} 2) +2 -n =1$ if and only if $a+1 = \frac{2(n-1)}{n-m}$.

    Let $\mathcal{Z}_{n,m}=N_{n,a',s'}$. If $s=a$, then by the recursive formulas we have $$a'=a+1=\frac{2(n-1)}{n-m}=[\frac{2(n-1)}{n-m}]$$ and
    \begin{equation*}
    \begin{split}
    s'=s-a+n-1=n-1 &=(\frac {n-m} 2) (a+1)\\ &=((a+2) (\frac {n-m} 2) +1 -n) (a+1)\\ &=((a'+1) (\frac {n-m} 2) +1 -n) a'\\ &=\frac {a'} 2 ((a'-1)n-(a'+1)m+2).
    \end{split}
    \end{equation*}
    If $s>a$, then by the recursive formulas $$a'=a=[\frac{2(n-2)}{n-m}]=[\frac{2(n-1)}{n-m}],$$ since $2 \mid (n-m)$ and $\frac{2(n-1)}{n-m} \neq a+1$, and
    \begin{equation*}
    \begin{split}
     s'=s-a+n-1 &=\frac a 2 ((a-1)(n-1)-(a+1)(m+1)-a+n-1\\ &=\frac {a'} 2 ((a'-1)n-(a'+1)m+2).
    \end{split}
    \end{equation*}
    This concludes the induction step and the statement is proven for $n-m$ even.
\end{enumerate}
\end{proof}

As a consequence we get the following corollary, where $\dim_K (A)$ denotes the vector space dimension of an algebra $A$.

\begin{corollary}
If $n-m$ is even, then $$\dim_K (\mathcal{Z}_{n,m})=\dim_K (\mathcal{Z}_{n-1,m}) +2.$$
\end{corollary}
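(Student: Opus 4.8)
The plan is to reduce everything to the explicit Kupisch series data from the preceding proposition together with a single dimension formula. First I would record that a D1-CNakayama algebra $N_{n,a,s}$ has Kupisch series consisting of $s$ entries equal to $a$ and $n-s$ entries equal to $a+1$, so that
$$\dim_K(N_{n,a,s}) = sa + (n-s)(a+1) = na + n - s.$$
This turns the corollary into a comparison of the parameters $(a,s)$ for $\mathcal{Z}_{n,m}$ with those for $\mathcal{Z}_{n-1,m}$.

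The key structural observation is a parity match. Since $n-m$ is assumed even, the quantity $(n-1)-m$ is odd, so $\mathcal{Z}_{n,m}$ is described by the even-case formulas of the proposition while $\mathcal{Z}_{n-1,m}$ is described by the odd-case formulas with $n$ replaced by $n-1$. I would then check that the two floor expressions coincide: the $a$-parameter of $\mathcal{Z}_{n,m}$ is $\left[\frac{2(n-1)}{n-m}\right]$, whereas the $a$-parameter of $\mathcal{Z}_{n-1,m}$ is $\left[\frac{2(n-1)}{(n-1)-m+1}\right]=\left[\frac{2(n-1)}{n-m}\right]$, the same value. Writing this common value as $a$, I set $\mathcal{Z}_{n,m}=N_{n,a,s}$ and $\mathcal{Z}_{n-1,m}=N_{n-1,a,s'}$.

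From the dimension formula this yields
$$\dim_K(\mathcal{Z}_{n,m}) - \dim_K(\mathcal{Z}_{n-1,m}) = (na+n-s) - \bigl((n-1)a + (n-1) - s'\bigr) = a + 1 - (s-s'),$$
so the whole corollary collapses to the single identity $s - s' = a-1$. I would verify this by substituting $s = \frac{a}{2}\bigl((a-1)n-(a+1)m+2\bigr)$ and $s' = \frac{a}{2}\bigl((a-1)(n-1)-(a+1)m+a-1\bigr)+1$: the $(a+1)m$ terms cancel, the two bracketed expressions differ by exactly $2$, and the extra constant $+1$ in $s'$ absorbs the remainder, leaving $s-s' = \frac{a}{2}\cdot 2 - 1 = a-1$.

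The computation is entirely routine once it is set up; the only delicate point is the bookkeeping, namely confirming that the even branch applies to $\mathcal{Z}_{n,m}$ and the odd branch to $\mathcal{Z}_{n-1,m}$, and that their floor expressions genuinely agree so that a common $a$ can be used. With that common value in hand, the remainder is a one-line algebraic simplification.
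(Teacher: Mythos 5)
Your proposal is correct and follows essentially the same route as the paper's own proof: both use $\dim_K(N_{n,a,s})=(a+1)n-s=na+n-s$, observe that the parity mismatch makes the even-case formula for $\mathcal{Z}_{n,m}$ and the odd-case formula for $\mathcal{Z}_{n-1,m}$ produce the same value of $a$, and then reduce to the algebraic identity $s-s'=a-1$. No gaps.
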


\begin{proof}
Let $\mathcal{Z}_{n,m}=N_{n,a,s}$ and $\mathcal{Z}_{n-1,m}=N_{n-1,a',s'}$. We have $a'=[\frac{2(n-1)}{n-m}]=a$. Since $\dim_K (\mathcal{Z}_{n,m})=(a+1)n-s$ and $\dim_K (\mathcal{Z}_{n-1,m})=(a+1)(n-1)-s'$, we get
\begin{equation*}
\begin{split}
\dim_K (\mathcal{Z}_{n,m})- &\dim_K (\mathcal{Z}_{n-1,m}) =(a+1)n-s-((a+1)(n-1)-s')\\
&=a+1-s+s'\\
&=a+1-\frac a 2 ((a-1)n-(a+1)m+2)+\frac a 2 ((a-1)(n-1)-(a+1)m+a-1)+1\\
&=2.
\end{split}
\end{equation*}
\end{proof}

We discuss a possible alternative description of the algebras $\mathcal{Z}_{n,m}$ in the next section.

\section{D1-Nakayama algebras with finite Gorenstein dimension}

We gave a full classification of D1-Nakayama algebras with finite global dimension in the previous section. In this section we focus on the Gorenstein dimension of D1-Nakayama algebras.
We need two results from the literature that we recall first.

\begin{proposition}\label{domdimnak}
Let $r \geq 1$.
Let $B$ be a selfinjective Nakayama algebra with Loewy length $w$ and $n$ simple modules. Let
$M=\bigoplus\limits_{i=0}^{n-1}{e_i B} \oplus \bigoplus\limits_{i=1}^{r}{e_{x_i} B / e_{x_i} J^{w-1}}$ with the $x_i$ different for all $1 \leq i \leq r$ and $A=\End_B(M)$. Then every Morita algebra that is also a Nakayama algebra is isomorphic to some algebra of this form $A$, and for the dominant dimension one has
$$\domdim(A) = \inf \{ k \geq 1 \mid \exists x_i , x_j \colon x_j + w-1 \equiv_n x_i + [[\frac{k+1}{2}]] w-g_k  \}+1.$$
Here, we set $g_k=1$, if $k$ is even, and $g_k=0$, if $k$ is odd. Furthermore, we have that $r$ is the number of indecomposable projective non-injective $A$-modules.
\end{proposition}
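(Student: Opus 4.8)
The plan is to combine the Morita--Tachikawa correspondence with the standard description (going back to M\"uller) of the dominant dimension of the endomorphism algebra of a generator-cogenerator: for $M$ a generator-cogenerator over $B$ and $A=\End_B(M)$ one has
$$\domdim(A) = 1 + \inf\{\, i \geq 1 \mid \Ext_B^i(M,M) \neq 0 \,\}.$$
First I would reduce the relevant Ext groups to the non-projective summands. Writing $M = B \oplus N$ with $N = \bigoplus_{i=1}^{r} M_{x_i}$ and $M_x := e_x B / e_x J^{w-1}$, the fact that $B$ is selfinjective (so every $e_i B$ is projective-injective) gives $\Ext_B^i(M,M) = \Ext_B^i(N,N)$ for all $i \geq 1$; hence it suffices to locate the smallest $i$ with $\Ext_B^i(N,N) \neq 0$.

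Next I would compute the syzygies of the summands $M_x$ explicitly. Since $\Omega^1(M_x) = e_x J^{w-1} = \soc(e_x B) = S_{x+w-1}$ is simple and $\Omega^1(S_{x+w-1}) = e_{x+w-1}J \cong M_{x+w}$, an easy induction yields $\Omega^{2l}(M_x) = M_{x+lw}$ and $\Omega^{2l+1}(M_x) = S_{x+(l+1)w-1}$. Because $B$ is selfinjective we may replace Ext by stable homomorphisms, $\Ext_B^k(M_{x_i},M_{x_j}) \cong \ul{\Hom}_B(\Omega^k M_{x_i}, M_{x_j})$, and then decide non-vanishing by matching tops and socles of uniserial modules. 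Concretely I expect $\ul{\Hom}_B(S_c, M_{x_j}) \neq 0$ if and only if $S_c \cong \soc(M_{x_j}) = S_{x_j+w-2}$, and $\ul{\Hom}_B(M_c, M_{x_j}) \neq 0$ if and only if $c \equiv_n x_j$, the identity of $M_c$ being the only stable map since every other homomorphism out of $M_c$ factors through the injective envelope $I(M_c) \cong e_{c-1}B$, which is projective. Feeding in the syzygy formulas and separating the even case $k=2l$ (which gives $x_j \equiv_n x_i + lw$) from the odd case $k=2l+1$ (which gives $x_j \equiv_n x_i + lw + 1$) reproduces in both cases the single congruence $x_j + w - 1 \equiv_n x_i + [[\frac{k+1}{2}]]w - g_k$ with the source index $x_i$ and target index $x_j$, whence the stated formula for $\domdim(A)$.

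The count of projective non-injective modules is then immediate from the correspondence between indecomposable summands of $M$ and indecomposable projective $A$-modules: the projective-injective $A$-modules are precisely $\Hom_B(M,X)$ for $X$ an indecomposable injective summand of $M$, and as $B$ is selfinjective these injective summands are exactly the $e_i B$. Since each $M_{x_i}$ has length $w-1 < w$ and is therefore non-injective, the modules $\Hom_B(M, M_{x_i})$ are exactly the $r$ indecomposable projective non-injective $A$-modules. For the surjectivity claim I would argue that for any Morita algebra $A$ that is Nakayama the base algebra is $B \cong fAf$ for the idempotent $f$ cutting out the projective-injectives, that $fAf$ is again Nakayama and selfinjective, and that uniseriality of the projective $A$-modules forces every non-projective summand of $M$ to have Loewy length exactly $w-1$ with pairwise distinct tops, i.e.\ $M$ has the displayed form.

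The main obstacle I anticipate is the stable-hom computation: one must verify that, apart from the identity of $M_c$, every homomorphism between the length-$(w-1)$ modules (and every non-socle map out of a simple) genuinely factors through a projective-injective, so that no spurious low-degree extensions appear and the infimum is governed by exactly the congruence above. Controlling this factorisation carefully, together with the bookkeeping of residues modulo $n$ and the parity correction $g_k$, is where the real work lies; the structural classification of the admissible modules $M$ should be comparatively routine once $B \cong fAf$ has been identified.
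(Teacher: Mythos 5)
The paper does not actually prove this proposition: its ``proof'' consists of the single line ``See \cite[Proposition 3.2]{Mar}'', so there is no internal argument to compare yours against. Your route --- M\"uller's characterisation $\domdim(A)=\inf\{i\ge 1\mid \Ext^i_B(M,M)\ne 0\}+1$ for a generator-cogenerator $M$, reduction to the non-projective summands using that $B$ is selfinjective, the syzygy formulas $\Omega^{2l}(M_x)=M_{x+lw}$ and $\Omega^{2l+1}(M_x)=S_{x+(l+1)w-1}$, and the translation into stable Hom groups --- is the standard one and is the method of the cited reference. I checked your two stable-Hom criteria and they are correct: any map $e_{x_j-1}B\to M_{x_j}$ sends the generator into $\rad^{n-1}M_{x_j}$ and hence sends $\soc(e_{x_j-1}B)$ into $\rad^{n+w-2}M_{x_j}=0$, so the socle inclusion $S_{x_j+w-2}\to M_{x_j}$ never factors through $I(S_{x_j+w-2})$; and $\ul{\Hom}(M_c,M_{x_j})$ is the quotient of $M_{x_j}e_c$ by the image of $M_{x_j}e_{c-1}$ under right multiplication by the arrow $c-1\to c$, which is exactly $M_{x_j}e_c\cap\rad M_{x_j}$, so the quotient is nonzero precisely when $c\equiv_n x_j$. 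Feeding these into the syzygy formulas reproduces the stated congruence with the correct parity correction $g_k$, and the count of the $r$ projective non-injective $A$-modules follows as you say from the Morita--Tachikawa correspondence since each $M_{x_i}$ has length $w-1<w$.

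The one genuinely thin spot is the surjectivity claim, i.e.\ that \emph{every} Morita algebra which is Nakayama arises from an $M$ of the displayed shape. Identifying $B\cong fAf$ and noting it is selfinjective Nakayama is fine, but the assertion that uniseriality of the projective $A$-modules ``forces'' every non-projective indecomposable summand of $M$ to be $e_xB/e_xJ^{w-1}$ with pairwise distinct tops is precisely the content of that claim and is asserted rather than argued; one must actually rule out all other indecomposable non-projective summands $X$ (for instance by exhibiting a decomposable top or socle in the corresponding projective $\Hom_B(M,X)$ or its radical). That step needs to be written out for the proof to be complete; everything else in your sketch is sound.
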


\begin{proof}
See \cite[Proposition 3.2]{Mar}.
\end{proof}

The second result involves the $M$-resolution dimension of a module, where $M$ is a given generator-cogenerator of the module category, see \cite{CheKoe} for a definition. For our purposes the important property is that the $M$-resolution dimension of a module $X$, denoted $M \! \operatorname{-resdim}(X)$, is equal to zero if and only if $X$ is a direct sum of direct summands of $M$, and $M \! \operatorname{-resdim}(X)>0$ otherwise.
We use the common notation $\tau_{z+1}=\tau\Omega^{z}$, introduced by Iyama (see \cite{Iya}).

\begin{proposition}
\label{CheKoetheorem}
Let $B$ be a finite dimensional algebra and $M$ a generator-cogenerator of $\mod B$, and define $A \coloneqq \End_B(M)$.
Let $A$ have dominant dimension $z+2$, with $z \geq 0$.
Then, for the right injective dimension of $A$ the following holds.
$$\id (A_A)=z+2 + M \! \operatorname{-resdim}(\tau_{z+1}(M)\oplus D(B)).$$
\end{proposition}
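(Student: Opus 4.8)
The plan is to deduce the formula from the general machinery of the Morita--Tachikawa correspondence, treating the two summands $z+2$ and $M\!\operatorname{-resdim}(\tau_{z+1}(M)\oplus D(B))$ as coming from two cleanly separated parts of the minimal injective coresolution of $A_A$. The starting observation is that, since $M$ is a generator of $\mod B$, the functor $F=\Hom_B(M,-)$ carries a minimal $\add M$-resolution of any module $X$ to a minimal projective resolution of $FX$ over $A$; hence $M\!\operatorname{-resdim}(X)=\pd_A(FX)$. Dually, via the duality $D$ one has $\id(A_A)=\pd({}_A DA)$, so the whole statement can be rephrased entirely in terms of projective dimensions of explicit $A$-modules, which is more convenient to manipulate.

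First I would record the role of the hypothesis $\domdim(A)=z+2$. By the definition of dominant dimension, the minimal injective coresolution $0\to A_A\to I^0\to I^1\to\cdots$ has $I^0,\dots,I^{z+1}$ projective-injective and $I^{z+2}$ injective but not projective; in particular the coresolution has length at least $z+2$. Truncating after the projective-injective strip gives an exact sequence $0\to A_A\to I^0\to\cdots\to I^{z+1}\to W\to 0$ with $W=\Omega^{-(z+2)}(A_A)$, and by minimality the tail $0\to W\to I^{z+2}\to\cdots$ is the minimal injective coresolution of $W$, so that $\id(A_A)=(z+2)+\id(W)$. The remaining task is therefore to prove $\id(W)=M\!\operatorname{-resdim}(\tau_{z+1}(M)\oplus D(B))$.

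The heart of the argument — and the main obstacle — is to identify the cosyzygy $W$ explicitly as $F$ applied to $B$-module data. Here I would exploit the fact that the Morita--Tachikawa functors intertwine the $A$-(co)syzygy taken along the projective-injective strip with the operator $\tau\Omega^{(\cdot)}$ on $\mod B$: passing through the $z$ projective-injective steps and then applying one Auslander--Reiten translate produces exactly $\tau_{z+1}=\tau\Omega^{z}$ applied to $M$. The summand $D(B)$ enters because $D(B)\in\add M$ is the cogenerator part of $M$, and the corresponding piece of the injective cogenerator $DA$ is not absorbed into the projective-injective strip; note that, since $D(B)\in\add M$, this summand cannot by itself raise the resolution dimension, so it only matters in the degenerate cases where $\tau_{z+1}(M)$ vanishes or already lies in $\add M$ (and then it correctly forces $\id(A_A)=z+2$). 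Making the natural isomorphism $W\cong F\bigl(\tau_{z+1}(M)\oplus D(B)\bigr)$ precise, while controlling minimality so that no spurious projective summands are introduced, is where all the bimodule bookkeeping lies, and is exactly the content of \cite{CheKoe}.

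Finally I would assemble the pieces: combining $\id(A_A)=(z+2)+\id(W)$ with the identification of $W$ and the translation $\id(W)=\pd_A\bigl(F(\tau_{z+1}(M)\oplus D(B))\bigr)=M\!\operatorname{-resdim}(\tau_{z+1}(M)\oplus D(B))$ yields the claimed equality. Since every step after the initial reduction is a careful but essentially formal manipulation inside the correspondence, I expect the genuine difficulty to be concentrated entirely in the explicit computation of the cosyzygy $W$; for a clean write-up I would simply invoke the corresponding statement in \cite{CheKoe} rather than reprove these functorial identities from scratch.
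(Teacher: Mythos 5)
The paper's own ``proof'' of this proposition is nothing more than the citation ``See \cite[Proposition 3.11]{CheKoe}'', and since your argument also bottoms out in invoking that same reference for the one genuinely hard step --- the identification of the $(z+2)$nd cosyzygy of $A_A$ with $F\bigl(\tau_{z+1}(M)\oplus D(B)\bigr)$ and the resulting dimension count --- you are taking essentially the same route. Your surrounding outline is a reasonable gloss on what Chen--Koenig actually do, though note that the step equating $\id(W)$ with the projective dimension $\pd_A\bigl(F(\tau_{z+1}(M)\oplus D(B))\bigr)$ is itself nontrivial and is part of what the citation is carrying, not something that follows formally from $F$ preserving minimal resolutions.
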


\begin{proof}
See \cite[Proposition 3.11]{CheKoe}.
\end{proof}

The Gorenstein symmetry conjecture, which says that the right and left injective dimension of the regular module always coincide, holds true for all algebras of finite finitistic dimension by \cite[Proposition 6.10]{AusRei}.
Since we deal in this article only with representation-finite algebras, the Gorenstein symmetry conjecture is true for those algebras (since representation-finite algebras always have finite finitistic dimension), and thus it is enough to look at the right injective dimension in the following.
By Proposition \ref{domdimnak}, a D1-CNakayama algebra $A$ is a Morita algebra if and only if it is isomorphic to an algebra of the form $\End_B(B \oplus P/\soc(P))$, where $B$ is a selfinjective Nakayama algebra and $P$ an indecomposable projective $B$-module.

\begin{theorem}
Let $w >2$ and $n \geq 3$. Let $B$ be a selfinjective Nakayama algebra with Loewy length $w$ and $n$ simple modules. Let $A \coloneqq \End_B(B \oplus P/S)$ for some indecomposable projective module $P$ with $\soc(P)=S$.
The following are equivalent.
\begin{enumerate}
\item $A$ is Gorenstein.
\item The dominant dimension of $A$ is even.
\item $w$ is a unit in $\mathbb{Z}/n\mathbb{Z}$.
\end{enumerate}
If the equivalent statements are true, then $$\Gdim(A)= \domdim(A)=2 \cdot \inf \{ h \geq 0 \mid hw+1 \equiv_n 0 \}+2.$$
\end{theorem}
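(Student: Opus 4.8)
The plan is to pin down the dominant dimension exactly and then control the injective dimension of $A_A$ through Proposition \ref{CheKoetheorem}. Since $M=B\oplus P/S$ has exactly one summand that is not projective-injective, namely $P/S=e_xB/e_xJ^{w-1}$, I am in the case $r=1$ of Proposition \ref{domdimnak}, where the indices $x_i,x_j$ are both forced to equal $x$. The defining congruence $x+w-1\equiv_n x+[[\frac{k+1}{2}]]w-g_k$ then collapses to a condition in $w,n,k$ only. Splitting on the parity of $k$: for $k=2h$ it reads $hw\equiv_n 0$, and for $k=2h+1$ it reads $hw+1\equiv_n 0$. The even branch always has a solution (smallest $h=n/\gcd(n,w)$), whereas the odd branch has one iff $\gcd(w,n)=1$, i.e.\ iff $w$ is a unit in $\mathbb{Z}/n\mathbb{Z}$. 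When $w$ is a unit, the smallest odd $k=2h_0+1$, with $h_0=\inf\{h\ge 0:hw+1\equiv_n 0\}\le n-1$, beats the smallest even one $k=2n$; hence $\domdim(A)=k+1=2h_0+2$ is even. When $w$ is not a unit, only the even branch survives and $\domdim(A)=2n/\gcd(n,w)+1$ is odd. This yields $(2)\Leftrightarrow(3)$ and the stated value of the dominant dimension in the unit case.

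For $(1)\Leftrightarrow(2)$ I would set $\domdim(A)=z+2$ and apply Proposition \ref{CheKoetheorem}, so that $A$ is Gorenstein iff $\id(A_A)<\infty$ iff $M\!\operatorname{-resdim}(\tau_{z+1}(M)\oplus D(B))=0$, i.e.\ iff $\tau_{z+1}(M)\oplus D(B)\in\add(M)$. Since $B$ is selfinjective, $D(B)\in\add(B)\subseteq\add(M)$ automatically, and every projective summand of $M$ is annihilated by $\Omega^z$ and hence by $\tau_{z+1}=\tau\Omega^z$; so the question reduces to whether the single module $\tau_{z+1}(P/S)$ lies in $\add(M)$. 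Writing $M_{j,t}=e_jB/e_jJ^t$, I would use that in the selfinjective Nakayama algebra $B$ the syzygy sends $M_{j,t}$ (length $t$, top $S_j$) to $M_{j+t,\,w-t}$, so that $\Omega^2$ preserves length and shifts the top by $+w$, while $\tau$ preserves length and shifts the top by one vertex. Iterating from $P/S=M_{x,w-1}$, the module $\Omega^{2i}(P/S)=M_{x+iw,\,w-1}$ has length $w-1$, whereas $\Omega^{2i+1}(P/S)$ is simple.

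The decisive dichotomy is then the parity of $z$. If $z$ is odd, $\tau_{z+1}(P/S)$ is simple; since $w>2$, every indecomposable summand of $M$ has length $w$ or $w-1$, both $>1$, so a simple module is never in $\add(M)$ and $A$ is not Gorenstein. If $z=2i$ is even, $\tau_{z+1}(P/S)$ has length $w-1$, hence lies in $\add(M)$ iff it is isomorphic to $P/S$, i.e.\ iff its top is $S_x$; tracking the $+w$ shift of $\Omega^2$ and the $+1$ shift of $\tau$ turns this into $iw+1\equiv_n 0$. But by the first paragraph this is precisely the congruence characterizing the (even) dominant dimension $z+2=2i+2$: since $\domdim(A)=2i+2$ forces $k=2i+1$ to solve the odd-branch congruence, $iw+1\equiv_n 0$ holds, and $A$ is Gorenstein. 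This closes $(1)\Leftrightarrow(2)$, and $\Gdim(A)=\domdim(A)=2h_0+2$ follows from the fact recorded after Theorem \ref{findimtheorem} that for D1-Nakayama algebras finiteness of the Gorenstein dimension forces it to coincide with the dominant dimension.

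The main obstacle is the third step: getting the vertex bookkeeping of $\Omega$ and $\tau$ exactly right so that the ``$P/S$ reappears'' condition becomes literally $iw+1\equiv_n 0$ and not a shifted variant. In particular I need the correct direction of the one-vertex shift of $\tau$, which I would fix once and for all from the almost split sequence $0\to M_{j+1,t}\to M_{j,t+1}\oplus M_{j+1,t-1}\to M_{j,t}\to 0$, and I must ensure that the $r=1$ reduction of Proposition \ref{domdimnak} produces the same congruence, so the two computations are forced to agree. The remaining syzygy arithmetic in the selfinjective Nakayama algebra is routine.
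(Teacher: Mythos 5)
Your proposal is correct and follows essentially the same route as the paper: Proposition \ref{domdimnak} with $r=1$ to split $\domdim(A)$ into the odd branch $hw\equiv_n 0$ and the even branch $hw+1\equiv_n 0$ (giving $(2)\Leftrightarrow(3)$ and the closed formula), then Proposition \ref{CheKoetheorem} plus the length/vertex bookkeeping of $\Omega^2$ and $\tau$ to decide whether $\tau_{z+1}$ of the non-projective summand lands back in $\add(M)$, with Theorem \ref{findimtheorem} ruling out finite Gorenstein dimension when the resolution dimension is positive. If anything, your version is slightly cleaner, since you apply $\tau_{z+1}$ directly to $P/S$ and arrive at the congruence $iw+1\equiv_n 0$, which is exactly the computation the paper intends.
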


\begin{proof}
We use Propositions \ref{domdimnak} and \ref{CheKoetheorem} to prove this theorem.
By symmetry we may assume $S=e_0 B/e_0 J$.
In the case of just one indecomposable non-projective summand, the formula for the dominant dimension in Proposition \ref{domdimnak} simplifies to $$\domdim(A)=\inf \{ k \geq 1 \mid w-1 \equiv_n [[\frac{k+1}{2}]]w-g_k  \}+1.$$
Splitting into even and odd cases, this can be simplified further to give $\domdim(A)=\min \{ d_e , d_o \}$, where $$d_o=2 \cdot \inf \{ h \geq 1 \mid hw \equiv 0  \}+1$$ and $$d_e=2 \cdot \inf \{ h \geq 0 \mid hw + 1 \equiv 0 \}+2.$$

Now assume $\domdim(A)=d_e$ is even. Then one has $hw \equiv -1$ for some $h$, and therefore $w$ is a unit in $\mathbb{Z}/n\mathbb{Z}$. In case the $\domdim(A)=d_o$ is odd, one has $hw \equiv 0$ for some $h$, and so $w$ is a zero-divisor and can not be a unit. This shows the equivalence of (2) and (3).

Now we show that (2) implies (1). Assume that the dominant dimension of $A$ is even.
This means $\domdim(A)=d_e= 2 \cdot \inf \{ h \geq 0 \mid h \equiv -w^{-1} \}+2$.
Note that for a general indecomposable $B$-module $e_i B/e_i J^l$ with $1 \leq l \leq w-1$, we have $\Omega^2(e_i B /e_i J^l) =e_{i+w}B/e_{i+w}J^l$ and $\tau(e_i B /e_i J^l)=e_{i+1} B /e_{i+1} J^l$.
Then $$\tau_{d_e-1} (e_0 B/ e_0 J)=\tau(\Omega^{d_e-2}(e_0 B/ e_0 J))=\tau(e_{w\frac{d_e-2}{2}}B/e_{w\frac{d_e-2}{2}} J)=e_{w\frac{d_e}{2}}B/e_{w\frac{d_e}{2}} J=e_{0}B/e_{0} J,$$ since by the definition of $d_e$ we have $w\frac{d_e}{2} =0$ (modulo $n$). Thus for $M=B \oplus P/S$ we have $$M \! \operatorname{-resdim}(\tau_{d_e-1}(M)\oplus D(B))=0.$$ By Proposition \ref{CheKoetheorem} we have $\Gdim(A)=\domdim(A)$ and $A$ is Gorenstein.

What is left to show is (1) implies (2), that $A$ is never Gorenstein in case the dominant dimension of $A$ is odd. So assume now that $\domdim(A)=d_o=2 \cdot \inf \{ h \geq 1 \mid hw \equiv 0  \}+1$.
Note that for an indecomposable $B$-module of the form $N=e_i B/ e_i J^l$ with $1 \leq l \leq w-1$,  we have that $N$ and $\Omega^r(N)$ have vector space dimension $l$ for all even $r$ (by the above calculation of $\Omega^2$), while $\Omega^1(N)$ and thus also $\Omega^q(N)$ have vector space dimension $w-l$ for all odd $q$.
Also $\tau(N)$ always has the same vector space dimension as $N$.
So $\tau(\Omega^{d_o-2}(e_0 B/e_0 J))$ has vector space dimension $w-1 \neq 1$, and thus $\tau(\Omega^{d_o-2}(e_0 B/e_0 J))$ can not be isomorphic to $e_0 B/ e_0 J$. So for $M=B \oplus P/S$ we have $$M \! \operatorname{-resdim}(\tau_{d_o-1}(M)\oplus D(B))>0.$$
This means that by Proposition \ref{CheKoetheorem}, the algebra $A$ has Gorenstein dimension strictly larger than the dominant dimension. But since $A$ is a representation-finite algebra of defect $1$, by Theorem \ref{findimtheorem} the dominant dimension equals the finitistic dimension, and thus $A$ would have Gorenstein dimension equal to the dominant dimension in case the Gorenstein dimension is finite. Hence the Gorenstein dimension of $A$ is infinite.
\end{proof}

\begin{remark}
We excluded the case $w=2$ in the previous theorem. In the case $w=2$ when $B$ is a selfinjective Nakayama algebra of Loewy length 2, it is easy to show that the algebra $\End_B(B \oplus P/S)$ has Kupisch series $[2,2,2,\dots,2,3]$ and global dimension $n+1$ and thus is always Gorenstein.
\end{remark}

\begin{remark} \label{remarkcluster}
One can show that the algebra $A=\End_B(B \oplus P/S)$ as in the previous theorem has finite global dimension (which is equivalent to $B \oplus P/S$ being a cluster-tilting module) if and only if $w=2$ or $w$ divides $n+1$. A proof will be given in forthcoming work \cite{MSTZ} using determinants.
\end{remark}

We saw in the previous theorem that for D1-Nakayama algebras that are Morita algebras the Gorenstein dimension is finite if and only if the dominant dimension is even. This is in fact true for all D1-Nakayama algebras with infinite global dimension as we show in the following.
We collect several results from the literature that we need in the proof.

\begin{lemma} \label{sinklemma}
Let $A$ be a non-selfinjective CNakayama algebra.
Then the number of sources in the resolution quiver and the number of sources in the injective resolution quiver of $A$ are equal to the defect of $A$.
\end{lemma}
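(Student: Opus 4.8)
The plan is to handle both quivers at once by reducing everything to a single source count on the resolution quiver. Since the injective resolution quiver of $A$ is by definition the resolution quiver of $A^{\op}$, since $A^{\op}$ is again a non-selfinjective CNakayama algebra, and since $\Def(A)=\Def(A^{\op})$ (as already used in the proof of Theorem \ref{inequality}), it suffices to prove that the number of sources in the resolution quiver of $A$ equals $\Def(A)$. Applying this statement to $A^{\op}$ then settles the injective resolution quiver as well.

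First I would record a purely combinatorial description of the defect. Let $[c_0,\dots,c_{n-1}]$ be the Kupisch series. The indecomposable injectives are the $n$ injective envelopes of the simple modules, and the projective ones among them are exactly the indecomposable projective-injective modules $e_iA$, which by \cite[Theorem 32.6]{AnFul} are those with $c_{i-1}\le c_i$. Hence $\Def(A)=n-|\{i\mid c_{i-1}\le c_i\}|=|\{i\mid c_{i-1}>c_i\}|$. Using the admissibility inequality $c_{i+1}\ge c_i-1$ (which holds because $e_{i+1}A$ is the projective cover of $e_iJ$, as in the proof of Lemma \ref{OmegaTwo}), a strict descent forces $c_i=c_{i-1}-1$, so that
$$\Def(A)=|\{i \mid c_i=c_{i-1}-1\}|.$$

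Next I would count the sources. The resolution quiver is the functional graph of the map $f\colon \mathbb{Z}/n\mathbb{Z}\to\mathbb{Z}/n\mathbb{Z}$, $f(i)=i+c_i$, so each vertex has out-degree one and the number of sources equals $n-|\operatorname{im}(f)|$. To compute the image I would pass to the non-decreasing integer lift $F$ determined by $F(i+1)-F(i)=1+c_{i+1}-c_i$ (nonnegative by admissibility) together with $F(n)=F(0)+n$, so that $f$ traverses the cycle $\mathbb{Z}/n\mathbb{Z}$ exactly once. A residue fails to lie in $\operatorname{im}(f)$ precisely when it is skipped by a step of $F$, and the step starting at $i$ skips $\max(c_{i+1}-c_i,0)$ residues; summing over one period yields
$$\#\{\text{sources}\}=\sum_{i}\max(c_{i+1}-c_i,0).$$

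Finally I would match the two counts by cyclic telescoping. Since $\sum_i(c_{i+1}-c_i)=0$, we get $\sum_i\max(c_{i+1}-c_i,0)=\sum_i\max(c_i-c_{i+1},0)$, and by admissibility each nonzero summand on the right equals $1$, so the right-hand side is $|\{i\mid c_i=c_{i+1}-1\}|$; reindexing by $j=i+1$ identifies this with $|\{j\mid c_j=c_{j-1}-1\}|=\Def(A)$, which finishes the argument. The main obstacle I anticipate is the source count itself: the reduction to the opposite algebra and the defect formula are routine, but because $f$ is only a self-map of residues, one must argue carefully that the single loop of the lift $F$ makes ``skipped residues'' coincide exactly with sources, with no miscount at the wrap-around where $F(n)\equiv F(0)$.
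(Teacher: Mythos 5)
Your argument is correct, and it takes a genuinely different route from the paper: the paper's entire proof of Lemma \ref{sinklemma} is a citation to \cite[Remark 1]{Rin}, whereas you give a self-contained combinatorial verification. Your reduction to the resolution quiver of $A^{\op}$ is legitimate (the injective resolution quiver is defined as the resolution quiver of $A^{\op}$, and $\Def(A)=\Def(A^{\op})$ as the paper itself uses), your identification $\Def(A)=|\{i \mid c_i=c_{i-1}-1\}|$ follows correctly from the injectivity criterion $c_{i-1}\le c_i$ together with the admissibility bound $c_{i+1}\ge c_i-1$, and the source count is sound: sources are exactly the residues missed by $f(i)=i+c_i$, and lifting to the non-decreasing $F(i)=i+c_i$ with $F(i+n)=F(i)+n$ makes the missed residues correspond bijectively to the integers skipped in one period, since $[F(0),F(0)+n)$ is a complete residue system and two values $F(j)$, $m$ in $[F(0),F(0)+n]$ can only be congruent if equal or if they are the endpoints (in which case the residue of $F(0)$ is hit anyway). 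The final telescoping step $\sum_i\max(c_{i+1}-c_i,0)=\sum_i\max(c_i-c_{i+1},0)=|\{i\mid c_i=c_{i+1}+1\}|$ is exactly right. What your approach buys is independence from the reference and an explicit formula (number of sources equals the number of strict descents of the Kupisch series); what it costs is the careful wrap-around bookkeeping, which you correctly identify as the only delicate point and handle properly.
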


\begin{proof}
This is explained in \cite[Remark 1]{Rin}.
\end{proof}

Recall that a module $M$ is called \emph{periodic} in case $\Omega^l(M) \cong M$ for some $l >0$.
The \emph{$\phi$-dimension} of a Nakayama algebra $A$ is defined as the smallest integer $k$ such that the following holds for any indecomposable $A$-module $M$:
$\Omega^k(M)=0$ or $\Omega^k(M)$ is periodic.
We note that for general Artin algebras the definition of the $\phi$-dimension is more complicated and we used here an equivalent characterisation for Nakayama algebras that is a consequence of the results in sections 3.2 and 3.3 of \cite{Sen2}.
We remark that the methods in \cite{Sen} and \cite{Sen2} have also been used already in \cite{CY} for Nakayama algebras and were recently generalised and used in \cite{M} for higher Nakayama algebras.
\begin{theorem} \label{phidim}
Let $A$ be a Nakayama algebra with infinite global dimension.
\begin{enumerate}
\item $\phi(A)$ is an even integer.
\item $1 \geq \phi(A) - \findim(A) \geq 0$.
\end{enumerate}
\end{theorem}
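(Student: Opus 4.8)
The plan is to work with the combinatorics of syzygies in the stable module category. First I would note that an LNakayama algebra has a directed quiver and therefore finite global dimension, so the hypothesis $\gldim(A)=\infty$ forces $A$ to be a CNakayama algebra, and its resolution quiver is defined. Encoding each indecomposable as $M(i,l)$ with $\top M(i,l)=S_i$ and length $l$ (projective exactly when $l=c_i$), the syzygy is $\Omega M(i,l)=M(i+l,\,c_i-l)$ whenever $l<c_i$, and $\Omega$ sends indecomposables to indecomposables or to $0$. I would pass to the stable category, where projectives are identified with $0$ and $\Omega$ becomes the stable syzygy operator on the finite set of stable indecomposables; since that set is finite, every orbit is eventually periodic, so $\phi(A)$ is finite. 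Writing $\psi(M)$ for the least $k$ with $\Omega^{k}M$ zero or periodic, one has $\phi(A)=\max_{M}\psi(M)$, and for a module of finite projective dimension $\psi(M)=\pd(M)$, because $\Omega^{k}M=0$ precisely when $k\ge\pd(M)$ while the intermediate syzygies are stably nonzero of finite projective dimension, hence never periodic.

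This already gives the lower estimate in (2): taking the supremum of $\psi(M)=\pd(M)$ over all indecomposables of finite projective dimension yields $\phi(A)\ge\findim(A)$. For the analysis of the remaining (infinite projective dimension) modules I would use the recursion $\psi(M)=1+\psi(\Omega M)$, valid whenever $M$ is stably nonzero and non-periodic. In particular a module $M^{*}$ realising the maximum $\phi(A)=\psi(M^{*})$ can be taken to be a ``source'': it is not the syzygy of any stably nonzero module, for such a preimage would be stably nonzero, non-periodic and have strictly larger $\psi$. Thus $\phi(A)$ is attained on a source, and I would study sources directly.

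The bridge to the resolution quiver is the computation $\Omega^{2}M(i,l)=M\!\left(i+c_i,\,c_{i+l}-c_i+l\right)$, so on tops $\Omega^{2}$ acts by the resolution-quiver map $\sigma\colon i\mapsto i+c_i$. Since a periodic module returns to itself under some even power of $\Omega$, its top must be a cyclic vertex of $\sigma$; the converse fails (a cyclic top does not force periodicity, as the length must also stabilise), so for an infinite projective dimension module $\psi(M)$ is the length of the pre-periodic part of its $\Omega$-orbit. I would bound this pre-period by $\findim(A)+1$ by comparing the non-periodic tail of a source with the finite projective dimension resolutions controlled by the resolution quiver in Theorem \ref{Shentheorem}; establishing this comparison, which identifies how far an infinite projective dimension module can sit from the periodic part, is the technical core worked out in sections 3.2 and 3.3 of \cite{Sen2}. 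Together with the lower bound this yields $\findim(A)\le\phi(A)\le\findim(A)+1$, i.e. part (2).

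The evenness in (1) is the main obstacle, and it cannot be seen module by module, since individual sources may have odd $\psi$ (for the algebra with Kupisch series $[3,4]$ the simple $S_0$ has $\psi(S_0)=1$). Given the two inequalities of (2), (1) is equivalent to the statement that $\phi(A)$ is the least even integer not below $\findim(A)$: it equals $\findim(A)$ when $\findim(A)$ is even and $\findim(A)+1$ when $\findim(A)$ is odd. The plan is to prove this by pinning the parity of the longest pre-period among infinite projective dimension modules against $\findim(A)$. Concretely, the even syzygies of a source trace the path $i,\sigma(i),\sigma^{2}(i),\dots$ of its top to a cycle in the resolution quiver, and the length bookkeeping in $\Omega^{2}M(i,l)=M(i+c_i,c_{i+l}-c_i+l)$, combined with the red and black classification of points and the weight obstruction of Theorem \ref{Shentheorem}, forces the deepest source into the periodic part after an even number of steps and ties that number to $\findim(A)$ within one. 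The delicate point I expect to be hardest — and where the full strength of the results in \cite{Sen2} is needed — is showing that odd-indexed syzygies, which can reach the periodic part prematurely, never govern the global maximum, so that $\phi(A)$ always comes out even.
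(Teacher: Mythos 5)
The paper does not actually prove this theorem: part (1) is quoted as \cite[Theorem 1]{Sen} and part (2) as \cite[Theorem (A)]{Sen2}, and your proposal — which correctly sets up the syzygy formulas $\Omega M(i,l)=M(i+l,c_i-l)$ and $\Omega^2 M(i,l)=M(i+c_i,c_{i+l}-c_i+l)$, the reduction to sources, the easy lower bound $\phi(A)\ge\findim(A)$, and the instructive $[3,4]$ example — explicitly defers both the evenness of $\phi(A)$ and the upper bound $\phi(A)\le\findim(A)+1$ to the same reference \cite{Sen2}. So in substance your argument rests on exactly the external results the paper cites, and the scaffolding you do supply is correct.
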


\begin{proof}
\begin{enumerate}
\item See \cite[Theorem 1]{Sen}.
\item See \cite[Theorem (A)]{Sen2}.
\end{enumerate}
\end{proof}

\begin{theorem} \label{D1Gor}
Let $A$ be a D1-Nakayama algebra of infinite global dimension.
Then the following are equivalent.
\begin{enumerate}
\item $A$ is a Gorenstein algebra.
\item $A$ is a minimal Auslander-Gorenstein algebra.
\item $A$ has even dominant dimension.
\item $A$ has even finitistic dimension.
\end{enumerate}
\end{theorem}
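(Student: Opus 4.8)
The plan is to separate the two straightforward equivalences from the genuine content, which is a parity statement about the finitistic dimension. Since $A$ has infinite global dimension while the unique D1-LNakayama algebra has finite global dimension, $A$ must be a D1-CNakayama algebra, say $A=N_{n,a,s}$. As $A$ is representation-finite with exactly one indecomposable projective non-injective module and $\domdim(A)\geq 1$, Theorem \ref{findimtheorem} gives $\domdim(A)=\findim(A)$, so (3) and (4) are equivalent. For (1) $\Leftrightarrow$ (2), note that (2) trivially implies (1); conversely, if $A$ is Gorenstein then $\domdim(A)\geq 2$ (by Proposition \ref{basicsabout1nak}(3), dominant dimension one forces infinite Gorenstein dimension), whence $\Gdim(A)=\findim(A)=\domdim(A)\geq 2$ exhibits $A$ as a minimal Auslander-Gorenstein algebra.

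It remains to prove (1) $\Leftrightarrow$ (4). By Theorem \ref{phidim} the integer $\phi(A)$ is even and $\findim(A)\in\{\phi(A)-1,\phi(A)\}$, so (4) is equivalent to $\findim(A)=\phi(A)$, and the whole problem collapses to determining the parity of $\findim(A)$. The first step I would take is to reformulate being Gorenstein in terms of the unique indecomposable injective non-projective module $T=e_sA/e_sJ^a$:
\[
A \text{ is Gorenstein}\quad\Longleftrightarrow\quad \pd(T)<\infty .
\]
This follows by duality. Indeed $A$ is Gorenstein iff $\id({}_AA)<\infty$ (Gorenstein symmetry is available since $A$ has finite finitistic dimension), and because every indecomposable projective left module except a single one $P'$ is injective, $\id({}_AA)=\id(P')$, which is finite iff the right module $D(P')$ has finite projective dimension; as $D(P')$ is the unique indecomposable injective non-projective right $A$-module we have $D(P')\cong T$, giving the asserted equivalence.

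The heart of the matter is then the parity lemma $\pd(T)<\infty\Leftrightarrow\findim(A)$ is even. The finiteness direction is a squeeze: if $\pd(T)=p<\infty$, the minimal resolution of $T$ terminates with $\Omega^{p-1}(T)=S_{n-1}$ and $\Omega^{p}(T)=e_0A$, while $\findim(A)=\domdim(A)=\codomdim(T)$ equals the first index at which the non-injective projective $e_0A$ appears in the resolution of $T$; this index is at most $p$, and since $T$ has finite projective dimension we also have $\findim(A)\geq p$, so $\findim(A)=p$. What remains, and is the crux, is to show that this common value is even, and dually that $\codomdim(T)$ is odd whenever $\pd(T)=\infty$. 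Here I would track the tops and lengths of the syzygies $\Omega^i(T)$, starting from $\Omega^1(T)=S_{s+a}$ (which is simple by the computation in Proposition \ref{basicsabout1nak}), using Lemma \ref{OmegaTwo} to control the behaviour of $\Omega^2$ on simple modules, and Theorem \ref{Shentheorem}(2) to translate the Gorenstein property into the statement that the unique red vertex $n-1$ does not lie on a cycle of the resolution quiver; Lemma \ref{sinklemma} pins down the global shape of the resolution quiver as a single cycle with one attached tail, with $0$ and $n-1$ being the two vertices mapping to $a$.

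I expect the parity computation to be the main obstacle. The naive hope that finite projective dimensions have uniform parity is false: by Lemma \ref{OmegaTwo} the exceptional vertex $s-1$ makes $\Omega^2$ of the simple $S_{s-1}$ have length two rather than one, and this produces finite-projective-dimension simple modules of even projective dimension, so the parity cannot be read off module by module. The correct parity must instead be extracted globally from the alternating even/odd syzygies of $T$: the even syzygies of the simple module $\Omega^1(T)$ travel along the resolution quiver, and the first time the top $S_0$ (equivalently the projective $e_0A$) is reached the resulting index is even precisely when $n-1$ is off every cycle, i.e. precisely when $A$ is Gorenstein. Making this last step precise — carefully accounting for the length-two excursion at $s-1$ and for the coincidence that $0$ and $n-1$ both feed into $a$ — is where the real work lies, and I would check it against the boundary cases $\domdim(A)\in\{1,2\}$ already settled by Proposition \ref{basicsabout1nak}.
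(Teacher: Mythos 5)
Your reductions are all sound and agree with the paper's: (3)$\Leftrightarrow$(4) and (1)$\Leftrightarrow$(2) follow from Theorem \ref{findimtheorem}, $A$ is forced to be a D1-CNakayama algebra $N_{n,a,s}$, and the Gorenstein property can be reformulated as $\pd(T)<\infty$ for the unique indecomposable injective non-projective module $T=e_sA/e_sJ^a$ (the paper uses the equivalent criterion from Theorem \ref{Shentheorem}(2), namely that the unique red vertex $n-1$ lies on no cycle of the resolution quiver). The problem is that the proposal stops exactly where the theorem begins. The parity statement -- that $\pd(T)$ is even when it is finite, and that $\codomdim(T)$ is odd when $\pd(T)=\infty$ -- is never proved; you explicitly defer it as ``where the real work lies'' and what follows is a list of quantities you would track rather than an argument. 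As written, neither implication between (1) and (3) is established, so the proof is incomplete at its essential point.

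For comparison, the paper handles the two directions by different means, neither of which is the direct syzygy-parity count you sketch. For (1)$\Rightarrow$(3) it simply invokes Sen's result that a Gorenstein Nakayama algebra of infinite global dimension has even Gorenstein dimension, and here Gorenstein dimension equals finitistic dimension equals dominant dimension by Theorem \ref{findimtheorem}. For (3)$\Rightarrow$(1) it argues by contradiction: if $\domdim(A)=2u$ is even but $A$ is not Gorenstein, then Theorem \ref{phidim} forces $\phi(A)=\findim(A)=2u$, so $\Omega^{2u}(T)$ is a periodic module; since $\codomdim(\Omega^{2u}(T))=0$ its projective cover is the unique non-injective projective $e_0A$, and the Gustafson form of minimal projective resolutions then places the vertex $0$ on a cycle of the resolution quiver. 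This contradicts the fact that the cycle through the red vertex $n-1$ (which exists because $A$ is assumed non-Gorenstein) cannot also contain $0$, as both $0$ and $n-1$ have their unique outgoing arrow pointing to $a$ and a connected component of the resolution quiver has only one cycle. You already have every ingredient of this argument on the table (Theorems \ref{Shentheorem} and \ref{phidim}, Lemma \ref{sinklemma}); the missing idea is to exploit the periodicity of $\Omega^{2u}(T)$ to force $0$ onto a cycle, rather than to attempt a bare-hands parity bookkeeping along the resolution, which the exceptional behaviour at $s-1$ makes genuinely delicate.
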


\begin{proof}
The equivalence of (1) and (2) and the equivalence of (3) and (4) are a consequence of Theorem \ref{findimtheorem}. We show that (1) is equivalent to (3) to finish the proof.

In case $A$ is Gorenstein with infinite global dimension, it has even Gorenstein dimension by \cite[Corollary 4.12]{Sen}, so (1) implies (3).

To prove (3) implies (1), now assume $A=N_{n,a,s}$ has even dominant dimension. We show that $A$ has finite Gorenstein dimension by using the resolution quiver of $A$. For each vertex $i$, there is an arrow $i \to f(i)$ in the resolution quiver, where $f(i)=i+c_i$ (modulo $n$). By Lemma \ref{Shentheorem}(2), the algebra $A$ is Gorenstein if and only if every cycle in the resolution quiver of $A$ is black. Since the unique red vertex of $A$ is $n-1$, we just have to show that $n-1$ does not lie on a cycle.
Note that all cycles in the resolution quiver have the same length and there are always arrows $0 \to 0+c_0=a$ and $n-1 \to n-1 +c_{n-1}=n-1+a+1=a$.
We assume to the contrary that $A$ is not Gorenstein and thus there is a cycle containing the arrow $n-1 \to a$. This cycle can not contain the vertex $0$ since the resolution quiver contains at most one cycle in a connected component.
We show that this leads to a contradiction to our assumption that the dominant dimension is even.
Let $2u$ be the dominant dimension of $A$.
The dominant dimension coincides with the finitistic dimension of $A$ since $A$ is a D1-Nakayama algebra. Because of the inequality $1 \geq \phi(A) - \findim(A) \geq 0$ and since $\phi(A)$ is an even integer (here we use our assumption that $A$ has infinite global dimension) by Theorem \ref{phidim}, we conclude that $\phi(A)=\findim(A)$.
Since $A$ has infinite Gorenstein dimension, the projective dimension of the unique indecomposable injective non-projective module $I \coloneqq e_sA/e_sJ^a$ is infinite, and thus $\Omega^{2u}(I)$ is a periodic module by the definition of the $\phi$-dimension.
The codominant dimension of $I$ is equal to $2u$, and thus the codominant dimension of $\Omega^{2u}(I)$ is equal to zero. Then there is a projective cover $e_0A \to \Omega^{2u}(I) \to 0$ since $e_0 A$ is the unique indecomposable projective non-injective $A$-module.
Since $\Omega^{2u}(I)$ is indecomposable, its minimal projective resolution is of the form $$\dots \to e_{f^2(j)}A \to e_{f^2(0)}A \to e_{f(j)}A \to e_{f(0)}A  \to e_j A \to e_0 A \to \Omega^{2u}(I) \to 0$$ for some $j$ according to \cite{Gus}. Since $\Omega^{2u}(I)$ is periodic, the vertex $0$ must lie on a cycle in the resolution quiver.
Thus we have arrived at a contradiction and $A$ must be Gorenstein.
\end{proof}

The next example shows that it is not in general true that a Nakayama algebra of infinite global dimension and even dominant dimension has finite Gorenstein dimension.

\begin{example}
The Nakayama algebra $A$ with Kupisch series $[ 4, 4, 5, 6, 5]$ has infinite Gorenstein dimension but the dominant dimension of $A$ is equal to $2$.
\end{example}

In the previous section we gave an explicit description of the algebras $\mathcal{Z}_{n,m}$ in terms of their Kupisch series. Since those algebras are higher Auslander algebras, one can also write them as the endomorphism ring $\End_A(M)$ of another Nakayama algebra $A$ with a cluster-tilting module $M$.
In case $A$ is selfinjective, a very nice such representation is possible as remarked in Remark \ref{remarkcluster}. In case $A$ is not selfinjective, one necessarily has that $A$ is again a D1-CNakayama algebra and $M$ has to be the basic version of the module $A \oplus D(A)$. This leads directly to the following open question, whose solution would give new representations of the algebras $\mathcal{Z}_{n,m}$ for $m \geq 1$.

\begin{question}
Is there a nice description of the D1-CNakayama algebras $A$ such that the module $A \oplus D(A)$ is cluster-tilting?
\end{question}

\section{Improved inequality for the global dimension of Nakayama algebras}

We only deal with CNakayama algebras in this section, since the results are trivial for LNakayama algebras because they have global dimension at most $n-1$ when they have $n$ simple modules.
Let $A$ be a CNakayama algebra, and let $\mathcal S$ denote a complete set of representatives of the isomorphism classes of simple $A$-modules. Following \cite{Mad}, we define a function $\psi \colon \mathcal S \to \mathcal S$ by
$\psi (S) \cong (\tau^{-1})^{w(S)} S$, where $w(S)$ is the length of the injective envelope of $S$, for each $S \in \mathcal S$. In other words, for $S, S' \in \mathcal S$, we have $w(S)=S'$ if and only if in the injective resolution quiver of $A$ there is an arrow from the vertex corresponding to $S$ to the vertex corresponding to $S'$. We say that $S \in \mathcal S$ is \emph{$\psi$-regular} if $\psi^r(S)=S$ for some $r \geq 1$, or equivalently the vertex corresponding to $S$ lies on a cycle in the injective resolution quiver of $A$.

By \cite{Mad}, a CNakayama algebra has finite global dimension if and only if there is a simple module of even projective dimension. In \cite{MM}, we proved the following.

\begin{theorem} \label{MMtheorem}
Let $A$ be a CNakayama algebra with $n$ simple modules and a simple module $S$ of even projective dimension.
Choose $m$ minimal such that there is a simple module with projective dimension $2m$. Then the global dimension of $A$ is bounded above by $n+m-1$.
\end{theorem}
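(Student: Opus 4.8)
The plan is to reduce the global dimension to combinatorial data of the resolution quiver and then bound it by a careful length count. Since $A$ has a simple module of even projective dimension, $A$ has finite global dimension, so by Theorem~\ref{Shentheorem}(1) its resolution quiver is connected of weight one; hence it consists of a tree feeding into a single cycle $C$, and the weight condition reads $\sum_{i \in C} c_i = n$. I would first record the syzygy bookkeeping for simple modules: one has $\top(\Omega^2(M)) = S_{f(j)}$ whenever $\top(M) = S_j$, where $f(j) \equiv_n j + c_j$, so the tops of the projectives in the minimal projective resolution of $S_i$ trace out, at even steps, the $f$-orbit $i, f(i), f^2(i), \dots$, which runs down the tree and then spins around $C$. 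Since $\gldim(A) = \max_i \pd(S_i)$, it suffices to bound each $\pd(S_i)$.

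Next I would prove a lemma pinning down the projective dimensions in terms of this orbit. The key points I would aim to establish are that a simple $S_i$ has \emph{even} projective dimension precisely when $i$ lies on the cycle $C$, so that the minimal even value $2m$ is realized among the cycle vertices, and that for a cycle vertex the resolution terminates only once the spinning syzygy, a uniserial module of the form $e_jA/e_jJ^l$, grows into the full projective $e_jA$. Tracking the length $l$ under repeated application of $\Omega^2$ via the recursion $l \mapsto c_{j+l} - c_j + l$ (and the Kupisch inequality $c_{j+1} \ge c_j - 1$) shows how many spins a given cycle vertex needs, and identifies $2m$ as the number of spins required by the cheapest cycle vertex. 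The output I want is a decomposition of each $\pd(S_i)$ into a part measuring the descent of $i$ to the cycle and a part measuring the spinning around $C$, with the latter controlled by $m$.

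Finally I would assemble the bound $n + m - 1$ by counting: the descent part is at most twice the number of off-cycle vertices on the path from $i$ to $C$, while the spinning part is governed by the total length a syzygy must gain to fill out a projective, which is limited by the available socle layers. Balancing these two contributions against the common budget of $n$ simple modules (and the identity $\sum_{i \in C} c_i = n$) should yield $\pd(S_i) \le n + m - 1$ for every $i$. I expect the main obstacle to be exactly this sharp count: because syzygies of simple modules are in general not simple, one cannot read the projective dimension directly off the orbit length, and separating the descent contribution (of size about $n$) from the spinning contribution (of size about $m$) without double counting --- so as to reach $n+m-1$ rather than the crude Gustafson bound $2n-2$ --- requires a delicate analysis of the lengths $l$ of the spinning syzygies $e_jA/e_jJ^l$ and of precisely when they first reach the full projective.
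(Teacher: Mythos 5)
There is a genuine gap, and it sits at the pivot of your whole plan. The lemma you propose --- that $S_i$ has even projective dimension precisely when $i$ lies on the cycle $C$ of the resolution quiver, so that the minimal even value $2m$ is realized among the cycle vertices --- is false. Take the CNakayama algebra with Kupisch series $[2,2,3]$: here $f(0)=2$, $f(1)=0$, $f(2)=5\equiv 2$, so the resolution quiver has the single cycle $\{2\}$ of weight $3/3=1$; but $e_2J\cong e_0A$ is projective, so $\pd(S_2)=1$ is odd, while $\pd(S_1)=2$ and $\pd(S_0)=3$, so the unique simple of even projective dimension lies \emph{off} the cycle. Your syzygy bookkeeping $\top(\Omega^2(M))=S_{f(j)}$ is correct, but it only tells you where the orbit goes, not when the resolution stops, and parity of $\pd(S_i)$ is not read off from cycle membership in the (projective) resolution quiver. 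The correct parity statement (used elsewhere in this paper, e.g.\ in the proof of Proposition \ref{gldimD1propo}) is phrased via the function $\psi$ and the \emph{injective} resolution quiver, through \cite[Lemma 3.1(b) and Theorem 3.3]{Mad}; conflating the two quivers is where the argument breaks. Beyond this, the final step is only asserted ("should yield $\pd(S_i)\le n+m-1$"); as you yourself note, the sharp count separating the descent and spinning contributions is the entire difficulty, and it is not carried out.

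For comparison: this paper does not prove the theorem at all --- it is quoted from \cite{MM} --- and the argument there has a different and cleaner structure, visible in the proof of Theorem \ref{gldimboundunique}. One does not bound each $\pd(S_i)$ by a single orbit decomposition; instead one plays two global bounds against each other: the Gustafson-type bound $\gldim(A)\le 2d'$, where $d'$ is the least integer such that $\psi^{d'}(S)$ is $\psi$-regular for every simple $S$, and the bound $\gldim(A)\le 2n+2m-2d-1$, where $d\ge d'$ is the number of non-$\psi$-regular simples. If $2d\le n+m-1$ the first bound finishes; otherwise $2n+2m-2d-1\le n+m-1$ and the second does. If you want to salvage your approach, you would need to replace your false lemma by the $\psi$-orbit description of which simples have even projective dimension and then still produce the two competing estimates; at that point you have essentially reconstructed the proof of \cite{MM}.
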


In this section we prove that the bounds in Theorem \ref{MMtheorem} are uniquely attained. Since we are dealing with CNakayama algebras, the possible values for $m$ are $1 \leq m \leq n-1$.

\begin{proposition} \label{gldimD1propo}
Let $A=N_{n,a,s}$ be a D1-CNakayama algebra with finite global dimension. Then $\gldim(A)=n+m-1$, where $m$ is the smallest number such that there is a simple module with even projective dimension $2m$.
Furthermore, in this case $\pd(S_{s-1})=2m$.
\end{proposition}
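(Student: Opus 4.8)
The plan is to prove both assertions simultaneously by induction on $n$, reducing from $A$ to the algebra $eAe$ studied in Lemma \ref{gldimequalitylemma} via the idempotent $e=e_0+\dots+e_{n-2}$. The engine of the reduction is the exact functor $F=\Hom_A(eA,-)\cong(-)e$. As in the proof of Lemma \ref{gldimequalitylemma}, applying $F$ to the minimal projective resolution of a simple module $S_i$ with $i\neq n-1$ deletes the terminal inclusion $0\to e_{n-1}J\to e_{n-1}A$ (on which $F$ is an isomorphism) and leaves a projective resolution of the simple $eAe$-module $S_ie$. First I would upgrade the inequality of Lemma \ref{gldimequalitylemma} to an equality by checking minimality is preserved: since $\rad(eAe)=eJe$, the functor $(-)e$ sends radical maps between projectives to radical maps, so no splitting occurs. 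This yields the exact shift $\pd_{eAe}(S_ie)=\pd_A(S_i)-2$ for every $i\neq n-1$. Because $S_{n-1}$ is the unique simple of projective dimension $1$ (Lemma \ref{OmegaTwo}), the simple $A$-modules of even projective dimension correspond bijectively under $F$ to the simple $eAe$-modules of even projective dimension, with all values shifted by $2$; in particular the minimal integer $m$ with a simple $A$-module of projective dimension $2m$ satisfies $m=m''+1$, where $m''$ is the analogous invariant of $eAe$.

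Next I would identify the distinguished vertices. By Lemma \ref{OmegaTwo} the vertex $s-1$ is the unique simple $A$-module whose second syzygy has length two, and likewise $eAe=N_{n-1,a',s'}$ has distinguished vertex $s'-1$. The key bookkeeping step is to show that the reduction $\mathcal{E}$ from the proof of Theorem \ref{uniqueness proof} carries $s-1$ to $s'-1$, i.e. $F(S_{s-1})=S_{s'-1}$. This I would verify directly from the formula in Lemma \ref{formulaforeAe}: computing $\phi$ shows that the Kupisch series $[r_0,\dots,r_{n-2}]$ of $eAe$ has its unique upward jump at index $s-1$, and normalising this series to standard cyclic form (a cyclic shift by $n-a$ in case (2) and by $n-a-1$ in case (3) of that proof) sends $s-1$ exactly to $s'-1$. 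Granting this, the induction hypothesis for $eAe$ gives $\pd_{eAe}(S_{s'-1})=2m''$, whence $\pd_A(S_{s-1})=2m''+2=2m$, and Lemma \ref{gldimequalitylemma} gives $\gldim(A)=\gldim(eAe)+2=((n-1)+m''-1)+2=n+m-1$.

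Finally I would dispose of the cases the reduction does not cover. The base case $n=2$ is the algebra $[2,3]$, where $S_0=S_{s-1}$ has projective dimension $2$ and $\gldim=2=n+m-1$ with $m=1$. For $n\geq 3$ two sub-cases lie outside the CNakayama reduction and are treated by direct syzygy computation: when $a=n$, Lemma \ref{gldiminfinitecrit} forces the Kupisch series $[n,n+1,\dots,n+1]$, and tracing $\Omega^2$ shows $S_0=S_{s-1}$ is the only simple of even projective dimension, with $\pd(S_0)=2n-2$, so $m=n-1$ and $\gldim=2n-2=n+m-1$; when $A=N_{n,2,n-1}=\mathcal{Z}_{n,1}$ (the single case where $eAe=C_{n-1}$ is an LNakayama algebra), a direct computation gives $\pd(S_{n-2})=\pd(S_{s-1})=2$ and $\gldim=n$, so $m=1$. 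In every remaining case $A$ has dominant dimension at least three by Proposition \ref{basicsabout1nak}, so $eAe$ is a D1-CNakayama algebra of finite global dimension with $n-1$ simple modules and the induction closes.

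The main obstacle is the special-vertex bookkeeping of the second paragraph: establishing $F(S_{s-1})=S_{s'-1}$ requires matching the upward jump of the explicit Kupisch series of $eAe$ against the cyclic relabelling implicit in $\mathcal{E}$, and this must be checked separately in the two regimes $s>n-a$ and $s<n-a-1$. A secondary point needing care is the upgrade from the inequality of Lemma \ref{gldimequalitylemma} to the per-simple equality $\pd_{eAe}(S_ie)=\pd_A(S_i)-2$, which rests on the preservation of minimality under $(-)e$.
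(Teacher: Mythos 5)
Your proposal is correct in strategy but takes a genuinely different route from the paper. The paper argues directly, with no induction: it works in the injective resolution quiver, using \cite[Lemma 3.1(b), Theorem 3.3]{Mad} together with Lemma \ref{OmegaTwo} to show that $U=S_{n-1}$ is the unique source and $V=S_{s-1}$ the unique vertex of indegree two, that the simples of odd projective dimension are exactly $U,\psi(U),\dots,\psi^{d-1}(U)$ with $\pd(\psi^i(U))=2i+1$, and that the simples of even projective dimension are exactly $V,\psi(V),\dots,\psi^{n-d-1}(V)$ with $\pd(\psi^i(V))=2i+\pd(V)$; feeding the fact that an algebra of global dimension $g$ has simples of projective dimension $g$ and $g-1$ into these two arithmetic progressions forces $\gldim(A)=n+m-1$ and $\pd(V)=2m$ simultaneously. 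That argument yields the complete list of projective dimensions of all simples as a byproduct, which the paper reuses in the proof of Theorem \ref{gldimboundunique}. Your induction via $A\mapsto eAe$ instead recycles the $\mathcal{E}$-machinery of Section \ref{sectd1}; it is self-contained within that section but pushes all the difficulty into the two points you yourself flag. On the first of these, be aware that your justification of minimality (``$(-)e$ sends radical maps between projectives to radical maps'') is literally false for maps into $e_{n-1}A$: one has $\rad(e_kAe)=e_kJe$ only for $k\neq n-1$, while $e_{n-1}Ae=e_{n-1}Je$, which is why the terminal inclusion becomes an isomorphism. Since $e_{n-1}A$ can also occur in the \emph{middle} of a minimal resolution of $S_i$, you must additionally check that there the image $\Omega^{t+1}(S_i)e$ lands in $\rad(e_{n-1}Ae)$; this holds because at such an interior step $\Omega^{t+1}(S_i)=e_{n-1}J^{\ell}$ with $\ell\geq 2$ (otherwise the resolution would terminate), so its image lies in $(e_0J)e=\rad(e_0Ae)$ under the identification $e_{n-1}Je\cong e_0Ae$. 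With that repaired, the per-simple shift $\pd_{eAe}(S_ie)=\pd_A(S_i)-2$ holds, the vertex bookkeeping $F(S_{s-1})=S_{s'-1}$ can be verified from Lemma \ref{formulaforeAe} in both regimes (or more cleanly by noting that $\Omega^2_{eAe}(S_{s-1}e)=\Omega^2_A(S_{s-1})e$ has length two and invoking Lemma \ref{OmegaTwo} for $eAe$), and your base and exceptional cases ($n=2$, $a=n$, and $N_{n,2,n-1}$) are handled correctly, so the induction closes.
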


\begin{proof}
Since $A$ has finite global dimension, by \cite[Theorem 3.3]{Mad} the injective resolution quiver of $A$ has a unique component. Since $A$ is a D1-CNakayama algebra, the unique component has a unique source by Lemma \ref{sinklemma}.

By \cite[Lemma 3.1(b)]{Mad}, for all $S, S' \in \mathcal S$ we have $\psi(S')=S$ if and only if $S'$ is a composition factor of $\Omega^2 (S)$.
Let $U=S_{n-1}$, which by Lemma \ref{OmegaTwo} is the unique simple $A$-module such that $\Omega^2 (U)=0$, in other words $\pd (U) =1$. Then in the injective resolution quiver the vertex corresponding to $U$ must be the unique source. Let $d$ be the number of simple $A$-modules that are not $\psi$-regular. Then $\{U, \psi(U), \dots, \psi^{d-1}(U) \}$ is a complete list of simple modules with odd projective dimension by \cite[Theorem 3.3]{Mad}.

Let $V=S_{s-1}$, which by Lemma \ref{OmegaTwo} is the unique simple $A$-module $V$ such that $\Omega^2 (V)$ has length two. Then in the injective resolution quiver the vertex corresponding to $V$ must be the unique vertex with indegree two, and $\{V, \psi(V), \dots, \psi^{n-d-1}(V) \}$ is a complete list of simple modules with even projective dimension.

By Lemma \ref{OmegaTwo}, the modules $\Omega^2 (\psi^i(U))$ are simple for $0< i \leq d-1$, and then by repeated use of \cite[Lemma 3.1(b)]{Mad} we get $U \cong \Omega^{2i} (\psi^i(U))$ for $0< i \leq d-1$. So $$\pd (\psi^i(U))=2i +\pd(U)=2i+1$$ for $0 \leq i \leq d-1$. Therefore the maximal projective dimension among the simple modules with odd projective dimension is $\pd (\psi^{d-1}(U))=2d-1$. Using the fact that if the global dimension of a finite dimensional algebra is $g$, then there exist a simple module of projective dimension $g$ and a simple module of projective dimension $g-1$, we can conclude that $\gldim(A)=2d-1$ or $\gldim(A)=2d$.

Similarly by Lemma \ref{OmegaTwo}, the modules $\Omega^2 (\psi^i(V))$ are simple for $0< i \leq n-d-1$, and again by repeated use of \cite[Lemma 3.1(b)]{Mad} we get $V \cong \Omega^{2i} (\psi^i(V))$ for $0< i \leq d-1$. So $$\pd (\psi^i(V))=2i +\pd(V)$$ for $0 \leq i \leq n-d-1$. Therefore the minimal projective dimension among the simple modules with even projective dimension is $\pd (V)$, so $\pd (V)=2m$, and the maximal projective dimension among the simple modules with even projective dimension is $\pd (\psi^{n-d-1}(V))=2n-2d-2+2m$.

Assume $\gldim(A)=2d-1$, an odd number. Then the maximal projective dimension among the simple modules with even projective dimension is $$\gldim(A) -1= 2n-2d-2+2m= 2n +2m -\gldim(A)-3.$$ Solving for $\gldim(A)$ we get $\gldim(A)=n+m-1$.

Assume $\gldim(A)=2d$, an even number. Then the maximal projective dimension among the simple modules with even projective dimension is $$\gldim(A)= 2n-2d-2+2m= 2n +2m -\gldim (A) -2.$$ This also implies $\gldim(A)=n+m-1$.
\end{proof}

In section \ref{sectd1} we used the notation $\mathcal{Z}_{n,m}$ to denote the unique D1-CNakayama algebra with $n$ simple modules and global dimension $n+m-1$ for $1 \leq m \leq n-1$. As a consequence of Proposition \ref{gldimD1propo}, the minimal projective dimension among the simple $\mathcal{Z}_{n,m}$-modules with even projective dimension is $2m$. So for $1 \leq m \leq n-1$, the bound in Theorem \ref{MMtheorem} is attained by the algebras $\mathcal{Z}_{n,m}$. We next show this is unique among CNakayama algebras.

\begin{theorem} \label{gldimboundunique}
Let $A$ be a CNakayama algebra with $n$ simple modules and finite global dimension.
Choose $m$ minimal such that there is a simple module with projective dimension $2m$ and assume $A$ has global dimension $n+m-1$. Then $A$ is isomorphic to $\mathcal{Z}_{n,m}$, and hence $A$ is a higher Auslander algebra.
\end{theorem}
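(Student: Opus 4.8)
The plan is to reduce everything to a single claim: if $A$ is a CNakayama algebra with $n$ simple modules, finite global dimension, and $\gldim(A)=n+m-1$ with $m$ minimal such that some simple module has projective dimension $2m$, then $\Def(A)=1$. Granting this, $A$ is a D1-CNakayama algebra of finite global dimension $n+m-1$, so by Theorem \ref{uniqueness proof} it is the unique such algebra, which is by definition $\mathcal{Z}_{n,m}$; Proposition \ref{gldimD1propo} confirms that the minimal even projective dimension of $\mathcal{Z}_{n,m}$ is indeed $2m$, so the hypotheses are consistent, and the equivalence theorem of Section \ref{sectd1} shows $\mathcal{Z}_{n,m}$ is a higher Auslander algebra. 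Thus the real work is the defect computation, which I would obtain by running the proof of the bound in Theorem \ref{MMtheorem} and analysing when it is sharp.

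First I would set up the injective resolution quiver of $A$, which is connected since $\gldim(A)$ is finite (the dual of Theorem \ref{Shentheorem}(1), via \cite[Theorem 3.3]{Mad}). Under $\psi$ it is a functional graph with a single cycle, whose vertices are the $\psi$-regular simples (even projective dimension) together with in-trees whose vertices are the non-regular simples (odd projective dimension). By Lemma \ref{sinklemma} the number of sources equals $\Def(A)$, and a simple is a source exactly when it has projective dimension one (its second syzygy vanishes, so it has no $\psi$-preimage). Using \cite[Lemma 3.1(b)]{Mad} as in Proposition \ref{gldimD1propo}, projective dimension increases by two along each $\psi$-step up a tree toward the cycle and by two along each arc of the cycle between consecutive attachment points. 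From this one reads off that the odd projective dimensions realised are exactly $1,3,\dots,\omega$ for $\omega$ the largest of them, so, counting the $\Def(A)$-fold repetition of projective dimension one, the number $d$ of odd-dimensional simples satisfies $d\ge\tfrac{\omega-1}{2}+\Def(A)$, i.e. $\omega\le 2d-2\Def(A)+1$.

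The key step is then the analogous, but more delicate, control of the largest even projective dimension $\varepsilon$ together with the cyclic constraint that the resolution quiver has weight one. Here I would show that extra sources force either shorter tree branches (lowering $\omega$) or several attachment points on the cycle (shortening the arcs and lowering $\varepsilon$), and combine this with the minimality of $2m$ to conclude that $\gldim(A)=\max(\omega,\varepsilon)=n+m-1$ is attainable only when there is a single source, i.e. $\Def(A)=1$. An alternative, possibly cleaner, route is an induction on $n$ paralleling Theorem \ref{uniqueness proof}: remove a projective-dimension-one simple via the idempotent $e=1-e_i$, use $\gldim(A)\le\gldim(eAe)+2$ from Proposition \ref{propogldimreduce} together with Theorem \ref{MMtheorem} applied to the $(n-1)$-simple algebra $eAe$ to force its invariant to be $m-1$ and its global dimension to be $(n-1)+(m-1)-1$, then invoke the induction hypothesis to get $eAe\cong\mathcal{Z}_{n-1,m-1}$ and reconstruct $A$.

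The main obstacle I expect is exactly the even side of the balance, and more precisely the fact that distinct arcs of the cycle may realise overlapping projective-dimension values while attachment points may let the projective dimension jump: a purely additive count is too weak to recover the sharp bound $n+m-1$, so one must genuinely use the weight-one (connectedness) structure of the resolution quiver to show that any configuration with two or more sources strictly compresses $\max(\omega,\varepsilon)$ to at most $n+m-2$. Making the interaction between the tree side and the cycle side uniform, and ruling out the compensating ``gappy'' even profiles, is the delicate point; the inductive reduction sidesteps some of this but then requires controlling the non-injectivity of $A\mapsto eAe$ for algebras not yet known to have defect one.
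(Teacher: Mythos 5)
Your overall reduction is exactly the paper's: show that the hypotheses force $\Def(A)=1$, then invoke Theorem \ref{uniqueness proof} to identify $A$ with $\mathcal{Z}_{n,m}$ and Proposition \ref{gldimD1propo} for consistency of the parameter $m$. Your setup is also correct: work in the injective resolution quiver, use Lemma \ref{sinklemma} to translate ``unique source'' into ``defect one'', and track projective dimensions along $\psi$-steps via \cite[Lemma 3.1(b)]{Mad}. But the central step --- actually deducing a unique source from $\gldim(A)=n+m-1$ --- is not carried out. You derive only the odd-side inequality $\omega\le 2d-2\Def(A)+1$ and then explicitly defer the even-side analysis (``I would show that extra sources force either shorter tree branches or several attachment points\ldots''), conceding that the additive count is too weak and that you do not know how to rule out the ``gappy'' even profiles. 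That deferred step is the entire content of the theorem, so as written this is a genuine gap, not a complete proof.

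The paper closes the gap differently and more cheaply, avoiding any analysis of the even projective dimensions or of the cycle structure. It uses two \emph{known} upper bounds simultaneously: from the proof of \cite[Theorem 2.2]{MM}, $\gldim(A)\le 2n+2m-2d-1$ where $d$ is the number of non-$\psi$-regular simples, which combined with $\gldim(A)=n+m-1$ gives $\gldim(A)\ge 2d-1$; and from Gustafson's argument, $\gldim(A)\le 2d'$ where $d'$ is the least integer with $\psi^{d'}(S)$ regular for all $S$. Since $d'\le d$ always, the chain $2d-1\le\gldim(A)\le 2d'\le 2d$ forces $d'=d$, which means the $d$ non-regular simples form a single $\psi$-orbit $\{S',\psi(S'),\dots,\psi^{d-1}(S')\}$; hence the injective resolution quiver has a unique source and $\Def(A)=1$. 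If you want to salvage your argument, this squeeze between the two inequalities is the missing idea. Your alternative inductive route has the obstruction you yourself identify: the reduction $A\mapsto eAe$ of Theorem \ref{uniqueness proof} is only set up for defect-one algebras, so using it before knowing $\Def(A)=1$ would require re-proving its key properties in greater generality.
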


\begin{proof}
We show that in case $A$ is an algebra as in the theorem with global dimension $n+m-1$, it has to be a D1-Nakayama algebra.
As a D1-Nakayama algebra with finite global dimension, it has to be isomorphic to $\mathcal{Z}_{n,m}$ as we saw in Theorem \ref{uniqueness proof}.
Assume first that $A$ has global dimension $n+m-1$ and a simple module with projective dimension $2m$ and $m$ minimal such that such a simple module exists.
We show that the injective resolution quiver of $A$ has exactly one source and thus $A$ is a D1-Nakayama algebra by Lemma \ref{sinklemma}.
By assumption we have $$2 \cdot \gldim(A)=2n+2m-2.$$ Let $d$ be the number of simple $A$-modules that are not $\psi$-regular. According to the proof of Theorem 2.2 in \cite{MM}, we have $$\gldim(A) \leq 2n+2m-2d-1.$$ In conclusion, $$\gldim(A) =2n+2m-2-\gldim(A) \geq 2n+2m-2-(2n+2m-2d-1)= 2d-1.$$

Let $d'$ be the smallest integer such that $\psi^{d'}(S)$ is $\psi$-regular for all $S \in \mathcal S$. Then $d' \leq d$ and $\gldim A \leq 2d'$ by the main argument of \cite{Gus}. The inequalities $$2d-1 \leq \gldim A \leq 2d' \leq 2d$$ imply that $d'=d$. So there exists $S' \in \mathcal S$ such that $\{S',\psi(S'),\dots,\psi^{d-1}(S')\}$ is a complete list of simple $A$-modules that are not $\psi$-regular. It follows that $S'$ is the unique source in the injective resolution quiver of $A$.
\end{proof}

\begin{corollary}
Let $A$ be a quasi-hereditary CNakayama algebra with $n$ simple modules.
Then $A$ has global dimension equal to $n$ if and only if it has Kupisch series [2,2,\dots,2,3]. Else we have $\gldim(A) \leq n-1$.
\end{corollary}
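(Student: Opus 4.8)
The plan is to deduce the corollary from the sharp bound of Theorem~\ref{gldimboundunique} together with the heredity structure of a quasi-hereditary algebra. Recall first that a quasi-hereditary algebra has finite global dimension, so $A$ falls under the scope of the preceding results; in particular by \cite{Mad} it has a simple module of even projective dimension, and we may speak of the minimal $m\ge 1$ for which there is a simple module of projective dimension $2m$.

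The key new input is geometric. Refining a heredity chain for $A$ so that each step is simple, its first step is a heredity ideal $AeA$ with $e=e_j$ a \emph{primitive} idempotent. The condition $e\,\rad(A)\,e=0$ forces $c_j\le n$, and, more importantly, passing to the quotient $A/AeA$ kills every path running through the vertex $j$; since the quiver of $A$ is an oriented cycle, this breaks the cycle and $A/AeA$ is an LNakayama algebra with $n-1$ simple modules. As LNakayama algebras with $n-1$ simples have global dimension at most $n-2$, the standard inequality $\gldim(A)\le \gldim(A/AeA)+2$ for a heredity ideal (valid since $AeA$ is projective and $\pd_A(A/AeA)\le 1$) yields $\gldim(A)\le (n-2)+2=n$. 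This already establishes the second alternative $\gldim(A)\le n-1$ whenever $\gldim(A)\ne n$.

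It then remains to treat the boundary case $\gldim(A)=n$ and to identify the algebra, together with the converse. For the converse, the algebra with Kupisch series $[2,\dots,2,3]$ is $\mathcal Z_{n,1}=N_{n,2,n-1}$, which has global dimension $n$ and is quasi-hereditary for the linear order making the vertex $0$ maximal: one checks directly that $\Delta(0)=P_0$, that $\Delta(i)=P_i$ for $1\le i\le n-2$ and $\Delta(n-1)=S_{n-1}$, and that every projective is $\Delta$-filtered, the only nontrivial step being $\rad P_{n-1}\cong P_0=\Delta(0)$. For the forward direction in the boundary case I would invoke Theorem~\ref{gldimboundunique}: once one knows that the minimal even projective dimension of a simple $A$-module is $2$, i.e.\ $m=1$, the hypothesis $\gldim(A)=n=n+m-1$ applies and forces $A\cong\mathcal Z_{n,1}$, whose Kupisch series is exactly $[2,2,\dots,2,3]$.

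The main obstacle is thus the remaining claim that quasi-heredity forces $m=1$, that is, the existence of a simple module of projective dimension \emph{exactly} $2$. The heredity ideal does produce a simple module of projective dimension $1$ (the right simple at the heredity vertex, which cannot be projective since no simple over a CNakayama algebra is), but a projective dimension $1$ simple alone does not suffice: for example $\mathcal Z_{n,m}$ with $m\ge 2$ also has one, yet has global dimension $n+m-1>n$. The delicate point is the bookkeeping of projective dimensions of simples along the heredity chain, equivalently under the LNakayama reduction $A/AeA$. Using that a heredity ideal induces isomorphisms $\Ext^k_{A/AeA}(X,Y)\cong\Ext^k_A(X,Y)$ for $A/AeA$-modules, I would show that a simple module of projective dimension $1$ over the LNakayama quotient acquires projective dimension exactly $2$ over $A$, and that this is the minimal even value occurring. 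I expect this controlled comparison of projective dimensions across the heredity reduction to be where the real work lies; it is precisely the point that distinguishes $\mathcal Z_{n,1}$ from the non-quasi-hereditary algebras $\mathcal Z_{n,m}$ with $m\ge 2$.
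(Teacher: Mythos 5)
Your overall skeleton matches the paper's: everything is designed to reduce to the case $m=1$ of Theorem~\ref{gldimboundunique}. Your heredity-chain argument for the upper bound is correct and is genuinely different from the paper (which gets $\gldim(A)\le n+m-1$ from Theorem~\ref{MMtheorem} and only then specializes to $m=1$): since a heredity chain of a quasi-hereditary algebra can be refined so that its first term is $Ae_jA$ with $e_j$ primitive, the quotient $A/Ae_jA$ is indeed an LNakayama algebra with $n-1$ simples, and $\gldim(A)\le\gldim(A/Ae_jA)+2\le n$ follows; this already yields the ``else $\gldim(A)\le n-1$'' clause, and your direct verification that $N_{n,2,n-1}$ is quasi-hereditary with global dimension $n$ settles the converse.

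The gap is exactly where you place it, and it is not closed. To run Theorem~\ref{gldimboundunique} in the boundary case $\gldim(A)=n$ you must know $m=1$, i.e.\ that a quasi-hereditary CNakayama algebra has a simple module of projective dimension \emph{exactly} two; without this, a hypothetical quasi-hereditary algebra with $m\ge 2$ and $\gldim(A)=n$ would not be covered, since then $n<n+m-1$ and the theorem says nothing. Your proposed route to $m=1$ does not work as stated: for a heredity ideal $J=Ae_jA$ and a $B=A/J$-module $X$ one has $\pd_B X\le\pd_A X\le\pd_B X+\pd_A(B)\le\pd_B X+1$, so a $B$-simple of projective dimension one over $B$ has projective dimension $1$ \emph{or} $2$ over $A$, and the value $1$ genuinely occurs (take $T=S_{j-2}$ when $c_{j-2}=c_{j-1}+1$; note also that the algebras $\mathcal{Z}_{n,m}$ with $m\ge 2$ have a simple of projective dimension one but none of projective dimension two, so the mere existence of an odd-dimension simple cannot force $m=1$). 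One would have to select the $B$-simple much more carefully, and it is not clear the selection can be made uniformly. The paper avoids all of this by quoting \cite[Proposition 3.1]{UY}: a CNakayama algebra is quasi-hereditary if and only if it has a simple module of projective dimension two. With that citation your argument closes immediately (and even the heredity-chain bound becomes redundant, since Theorem~\ref{MMtheorem} with $m=1$ gives $\gldim(A)\le n$); without it, the forward implication ``$\gldim(A)=n$ implies Kupisch series $[2,2,\dots,2,3]$'' remains unproved.
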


\begin{proof}
This is the special case of Theorem \ref{gldimboundunique} in the case $m=1$, since a CNakayama algebra is quasi-hereditary if and only if it has a simple modules of projective dimension two by \cite[Proposition 3.1]{UY}.
\end{proof}

\section{A conjecture for the existence of higher Auslander algebras among Nakayama algebras}

We end this article with a short section where we discuss the existence of higher Auslander algebras among Nakayama algebras.
Given a fixed finite quiver $Q$ and a field $K$ we define the \emph{spectrum} $\zeta_{(Q,K)}$ of the tuple $(Q,K)$ as $$\zeta_{(Q,K)} \coloneqq \{ \gldim(KQ/I) \mid KQ/I \text{ is a higher Auslander algebra for some admissible ideal } I \}.$$
In general it seems very complicated to calculate $\zeta_{(Q,K)}$ for non-trivial choices of $Q$. Since Nakayama algebras are one of the most basic algebras in representation theory, the following questions are natural.

\begin{question}
Let $Q_{L_n}$ be the quiver of a linear oriented line with $n$ points and $Q_{C_n}$ be the quiver of a linear oriented cycle with $n$ points as in the preliminaries.
\begin{enumerate}
\item What is $\zeta_{(Q_{L_n},K)}$ for $n \geq 2$?
\item What is $\zeta_{(Q_{C_n},K)}$ for $n \geq 2$?
\end{enumerate}
\end{question}

We give one example that has been obtained with the help of a computer:

\begin{example}
For $n=9$, we have $$\zeta_{(Q_{L_9},K)}= \{ 2, 3, 4, 5, 8 \}$$ and $$\zeta_{(Q_{C_9},K)}= \{3, 4, 5, 6, 7, 9, 10, 11, 12, 13, 14, 15, 16\}.$$
\end{example}

In general it seems very complicated to answer the previous question for a given $n$. However, we noted with the help of the computer that $$\zeta_{(Q_{L_n},K)} \cup \zeta_{(Q_{C_n},K)} = \{2,3, \dots ,2n-2 \}$$ for $2 \leq n \leq 14$.

Motivated by this, we end this article with the following conjecture on the existence of higher Auslander algebras for Nakayama algebras.

\begin{conjecture}
For every $n \geq 2$ and $k$ with $2 \leq k \leq 2n-2$ there exists a connected Nakayama algebra with $n$ simple modules that is a higher Auslander algebra with global dimension $k$.
\end{conjecture}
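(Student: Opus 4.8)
The plan is to fix the target global dimension $k$ and realise every admissible number of simple modules $n$, organising the argument around the range of $k$. The interval $n \le k \le 2n-2$ is already settled: by Theorem \ref{uniqueness proof} the algebras $\mathcal{Z}_{n,m}$ are higher Auslander algebras of global dimension $n+m-1$ for each $1 \le m \le n-1$, and the value $k=n-1$ is realised by the linear algebra with Kupisch series $[2,2,\dots,2,1]$, which has $\gldim = \domdim = n-1$. Thus the genuinely open content is the lower range $2 \le k \le n-2$; equivalently, fixing $k$, one must produce a higher Auslander Nakayama algebra of global dimension $k$ for every $n \ge \lceil (k+2)/2 \rceil$. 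Note that for $k < n$ any such algebra has $\domdim = k < n$, so by Corollary \ref{inequalitycor}(3) its defect may exceed $1$; these examples necessarily lie outside the D1-classification, which is why the lower range is not covered by the earlier sections.

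The first concrete step is a \emph{cyclic stability lemma}: if a CNakayama algebra $B$ with Kupisch series $\mathbf{b}$ of length $p$ is a higher Auslander algebra of global dimension $k$, then for every $t\ge 1$ the algebra $B^{(t)}$ with Kupisch series $\mathbf{b}$ repeated $t$ times is again a higher Auslander algebra of global dimension $k$. Here $B^{(t)}$ is a Galois cover of $B$ with group $\mathbb{Z}/t$, arising from the covering $C_{tp}\to C_p$ of cyclic quivers with periodically repeated relations; one checks that the push-down functor carries minimal projective resolutions and minimal injective coresolutions to minimal ones, so that both $\gldim$ and $\domdim$ are covering invariants and their equality is inherited, while Theorem \ref{Shentheorem} gives an independent combinatorial confirmation that finiteness of the global dimension passes to $B^{(t)}$. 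Applying this to the seeds $\mathcal{Z}_{p,\,k-p+1}$, which exist precisely for $\lceil (k+2)/2\rceil \le p \le k$ (since then $1\le m=k-p+1\le p-1$ and $\gldim=p+m-1=k$), produces higher Auslander CNakayama algebras of global dimension $k$ with $tp$ simple modules for every such $p$ and every $t\ge 1$. Hence the cyclic side realises every $n$ divisible by some integer in the window $[\lceil (k+2)/2\rceil,\, k]$.

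The values of $n$ missed by this construction are exactly those with no divisor in that window, most notably the primes $n>k$. To fill them I would invoke the classification of higher Auslander LNakayama algebras from \cite{V} (and \cite{CIM} for the truncated case), which furnishes linear examples of global dimension $k$ for $n$ in prescribed residue classes. The decisive statement, and the point I expect to be the main obstacle, is a \emph{covering lemma}: the arithmetic progressions coming from the cyclic stability lemma together with the residue classes coming from the linear classification should exhaust all $n\ge \lceil (k+2)/2\rceil$. This reduces the conjecture to an explicit number-theoretic covering problem, and the resistant cases are the primes $n>k$, where no proper cyclic cover is available and one must guarantee a linear higher Auslander algebra of global dimension \emph{exactly} $k$ with $n$ simple modules.

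To attack the covering lemma I would argue by strong induction on $n$: for composite $n$ with a divisor in the window the stability lemma lifts a smaller example, while the remaining $n$ are supplied by the linear classification; the computational verification for $2\le n\le 14$ anchors the base cases and, more usefully, pins down the residue data that the linear side must provide. The step I expect to resist a clean treatment is controlling the exact global dimension of the linear family uniformly in $n$: the truncated algebras of \cite{CIM} have global dimension growing with $n$, so the required linear examples are non-truncated, and their global dimension must be computed through the resolution quiver of the opposite algebra and the function $\psi$ of \cite{Mad}. Making this computation uniform in $n$ and matching the resulting residue classes against the cyclic progressions, so that no value of $n$ is left uncovered, is the crux of any full proof.
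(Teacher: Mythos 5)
You are attempting to prove a statement that the paper explicitly leaves as an open conjecture: the authors establish it only for $n-1 \leq k \leq 2n-2$ (via Theorem \ref{uniqueness proof} together with the LNakayama algebra with Kupisch series $[2,2,\dots,2,1]$) and otherwise report computer verification for $n \leq 14$, so there is no proof in the paper to measure yours against. Your handling of the upper range coincides with the paper's. Your periodic-repetition (``cyclic stability'') lemma is a genuinely new ingredient and, as far as I can check, is correct and provable without covering theory: periodicity of the Kupisch series makes syzygies and cosyzygies of indecomposables over $B^{(t)}$ compute identically modulo $tp$ as they do modulo $p$, and the weight--connectivity criterion of Theorem \ref{Shentheorem} shows the unique cycle of the resolution quiver of $B$ lifts to a unique cycle of weight $1$ for $B^{(t)}$, so $\gldim$, $\domdim$ and their equality are inherited. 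Applied to the seeds $\mathcal{Z}_{p,k-p+1}$ this settles the conjecture for every $n$ admitting a divisor in the window $[\lceil (k+2)/2\rceil,\,k]$, which is strictly more than the paper proves.

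The gap is exactly where you locate it, and it is fatal to the argument as a proof of the full statement. Take $k=5$ and $n=9$: the divisors of $9$ are $3$ and $9$, neither of which lies in $[4,5]$, so the repetition construction produces nothing, yet $5\in\zeta_{(Q_{C_9},K)}$ by the paper's computation --- so the needed example exists but must come from a non-periodic Kupisch series. The paper's example $[5,5,5,5,5,9,8,7,6]$ of global dimension $3$ likewise shows the cyclic higher Auslander algebras are not exhausted by periodic repetitions of the $\mathcal{Z}_{p,m}$. On the linear side, \cite{CIM} and \cite{V} classify higher Auslander algebras only among \emph{truncated} Nakayama algebras, whose realizable global dimensions for fixed $n$ form a sparse set (for $n=9$ the linear spectrum is $\{2,3,4,5,8\}$); there is no result in the literature you cite that guarantees a linear higher Auslander algebra of prescribed global dimension $k$ for the leftover values of $n$, in particular for $n$ prime with $n>k$. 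Your ``covering lemma'' is therefore not merely unproven: the two families you propose to cover with are demonstrably insufficient, and new constructions would be required. What you have is a correct proof for $k\geq n-1$ plus a genuine partial extension, not a proof of the conjecture.
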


Note that $2n-2$ is also the bound on the global dimension of a Nakayama algebra with $n$ simple modules and thus the conjecture says that essentially for all possible values of the global dimension ($\geq 2$) of a Nakayama algebra there exists a higher Auslander algebra with this value for the global dimension.
We proved the conjecture in this article for $k$ with $n-1 \leq k \leq 2n-2$ and we note that the conjecture is easy to verify for $k=2$ and $k=3$. We verified the conjecture with a computer for $n \leq 14$.

\end{document}